\definecolor{red}{rgb}{0.7,0.15,0.15}
\definecolor{green}{rgb}{0,0.5,0}
\definecolor{blue}{rgb}{0,0,0.7}
\makeatletter \@addtoreset{equation}{section}
\newtheorem{theorem}{Theorem}[section]
\newtheorem{corollary}[theorem]{Corollary}
\newtheorem{proposition}[theorem]{Proposition}
\newtheorem{definition}[theorem]{Definition}
\newtheorem{remark}[theorem]{Remark}
\def\no{\noindent}
\def\beq{\begin{eqnarray}}
\def\eeq{\end{eqnarray}}
\def\be*{\begin{eqnarray*}}
\def\ee*{\end{eqnarray*}}
\def \E{\mathbb{E}}
\def \F{\mathbb{F}}
\def \H{\mathbb{H}}
\def \N{\mathbb{N}}
\def \P{\mathbb{P}}
\def \R{\mathbb{R}}
\def\Ac{{\cal A}}
\def\Cc{{\cal C}}
\def\Fc{{\cal F}}
\def\Gc{{\cal G}}
\def\Hc{{\cal H}}
\def\Lc{{\cal L}}
\def\Oc{{\cal O}}
\def\Pc{{\cal P}}
\def\Rc{{\cal R}}
\def\Sc{{\cal S}}
\def\Wc{{\cal W}}
\def\x{\times}
\def\Om{\Omega}
\def\om{\omega}
\def\0{\mathbf{0}}
\def \mub{\overline{\mu}}
\def \muh{\widehat{\mu}}
\def\normeL2#1{\left\|{#1}\right\|_{L^2}}
\def\Xh{\widehat X}
\def\esup{{\rm ess \, sup}}
\def \Ubb {\boldsymbol{U}}
\def \Xbb{\mathbf{X}}
\def \Qr{\mathrm{Q}}
\def \1{\mathds{1}}
\def \xbb {\boldsymbol{x}}
\def \Xbb{\mathbf{X}}
\def \Qr{\mathrm{Q}}
 \title{A notion of BSDE on the Wasserstein space and its applications to control problems and PDEs\footnote{The author thanks Dylan {\sc Possama\"{i}}  for his helpful suggestions and interesting discussions.}
 
 }
\author{
 Mao Fabrice Djete\footnote{\'Ecole Polytechnique Paris, Centre de Math\'ematiques Appliqu\'ees, mao-fabrice.djete@polytechnique.edu. This work benefits from the financial support of the Chairs {\it Financial Risk} and {\it Finance and Sustainable Development}} 
    }
             \date{\today}
\begin{document}

\maketitle
 
\begin{abstract}
    We introduce a class of backward stochastic differential equations (BSDEs) on the Wasserstein space of probability measures. This formulation extends the classical correspondence between BSDEs, stochastic control, and partial differential equations (PDEs) to the mean--field (McKean–Vlasov) setting, where the dynamics depend on the law of the state process. The standard BSDE framework becomes inadequate in this context, motivating a new definition in terms of measure--dependent solutions.

\medskip
Under suitable assumptions, we demonstrate that this formulation is in correspondence with both mean--field control problems and partial differential equations defined on the Wasserstein space.
 A comparison principle is established to ensure uniqueness, and existence results are obtained for generators that are linear or quadratic in the $z$--variable.
This framework provides a probabilistic approach to control and analysis on the space of probability measures.
\end{abstract}


\vspace{3mm}
\no{\bf MSC2010.} 60K35, 60H30, 91A13, 91A23, 91B30.

\section{Introduction}\label{sec:intro}

Backward Stochastic Differential Equations (BSDEs) have emerged as a powerful framework in the theory of stochastic processes, combining backward--in--time dynamics with adaptedness to a given filtration. Introduced rigorously in the nonlinear setting by \citeauthor*{pardoux1990adapted} \cite{pardoux1990adapted}, BSDEs have since become an important tool in the study of stochastic control problem and the theory of nonlinear partial differential equations (PDEs).

\medskip
In a filtered probability space $\left(\Omega, \mathcal{F}, \F:=(\mathcal{F}_t)_{t \in [0,T]}, \mathbb{P} \right)$ supporting a Brownian motion $(W_t)_{t \in [0,T]}$, an $\F$--adapted process $(Y_t,Z_t)_{t \in [0,T]}$ satisfying: for all $t \in [0,T]$,
\begin{align} \label{eq:intro_BSDE}
    Y_t = \xi + \int_t^T f_s( Y_s, Z_s) \, \mathrm{d}s - \int_t^T Z_s \, \mathrm{d}W_s,
\end{align}
is called a BSDE solution with given terminal condition
$\xi$ and generator $f$. Under square integrability of $\xi$ and Lipschitz assumptions on $f$, $(Y,Z)$ exits and is uniquely defined.  This initial framework has since been extensively generalized. Extensions include BSDEs with generators of quadratic or superquadratic growth (  \citeauthor*{kobylanski2000backward} \cite{kobylanski2000backward}), {\color{black}reflected BSDEs with obstacle constraints ( \citeauthor*{el1997reflected} \cite{el1997reflected} )}, BSDEs driven by L\'evy processes or jump terms, and second--order BSDEs (2BSDEs) that account for model uncertainty ( \citeauthor*{cheridito2007second} \cite{cheridito2007second}, \citeauthor*{soner2012wellposedness} \cite{soner2012wellposedness}). In addition, systems of BSDEs and multi--dimensional BSDEs have been studied in connection with game theory and risk--sharing problems.

\medskip
In stochastic control theory, BSDEs have usually been utilized through their connection with the Pontryagin maximum principle (see for instance \citeauthor*{bismutIntroductory1978} \cite{bismutIntroductory1978} and \citeauthor*{pengGeneral1990} \cite{pengGeneral1990}). These methods yield optimality conditions by characterizing adjoint processes and Hamiltonians. However, in this paper, we want to emphasize an alternative and powerful role of BSDEs--namely, their integration into the dynamic programming framework. Let us briefly explain this approach.

\begin{itemize}
    \item \textbf{From BSDE to stochastic control problem:} We consider the following stochastic control problem 
\begin{align*}
    V:=\sup_{\alpha \in \Ac} \E \left[\int_0^T L(t,X^\alpha_t,\alpha_t)\mathrm{d}t+g(X^\alpha_T) \right]\;\mbox{with}\;X^\alpha_\cdot=X_0 + \int_0^\cdot b(r,X^\alpha_r,\alpha_r)\mathrm{d}r +W_\cdot
\end{align*}
for some given maps $(b,L,g)$, and $\Ac$ the set of $A$--value $\F$--predictable processes $\alpha
$. The solution of the control problem is given in the following way. Let $(Y,Z)$ be a BSDE solution of \eqref{eq:intro_BSDE} when $f_s(y,z):=\sup_{a \in A}L(s,X_0+W_s,a) + z\;b(s,X_0+W_s,a)$ and $\xi:=g(X_0+W_T)$. Under mild assumptions, there exist Borel maps $\widehat{Y}:[0,T] \x \R \to \R$ and $\widehat{Z}:[0,T] \x \R \to \R$ s.t. $Y_t=\widehat{Y}(t,X_0+W_t)$ and $Z_t=\widehat{Z}(t,X_0+W_t)$ a.e. With $\widehat{\alpha}(s,w,z):=\arg \max_{a \in A}L(s,w,a) + z\;b(s,w,a)$, the solution is given by $\left(\widehat{\alpha} \left(t,X^\star_t,\widehat{Z}(t,X^\star_t) \right) \right)_{t \in [0,T]}$ where 
    \begin{align} \label{eq:intro_optimal}
        X^\star_\cdot=X_0 + \int_0^\cdot b\left(r,X^\star_r,\widehat{\alpha} \left(r,X^\star_r,\widehat{Z}(r,X^\star_r) \right) \right)\mathrm{d}r +W_\cdot
    \end{align}
    and $\E[Y_0]=V$. More generally, $Y_t$ corresponds to the dynamic version of $V$ that naturally satisfies a dynamic programming principle (see  \citeauthor*{el1997backward} \cite{el1997backward} for an overall and more details).

    \item \textbf{From stochastic control problem to BSDE} For any $\alpha$, let $\mathrm{d}\P^\alpha:=U^\alpha_T \mathrm{d}\P$ with  $\frac{\mathrm{d}U^\alpha_t}{U^\alpha_t}=b \left(t,X_0+W_t, \alpha_t \right)\mathrm{d}W_t$, $U^\alpha_0=1$. By setting, for all $t \in [0,T],$
    $$
        Y_t:= \esup_{\alpha \in \Ac} \E^{\P^\alpha} \left[ \int_t^T L(s,X_0 + W_s,\alpha_s)\mathrm{d}t+g(X_0 + W_T)\bigg| \Fc_t \right]=V(t,X_0+ W_t).
    $$
    If there exists an optimal control $\alpha^\star$. Then, there exists a process $Z$ s.t. $(Y,Z)$ is a BSDE solution of \eqref{eq:intro_BSDE} when $f_s(y,z):=\sup_{a \in A}L(s,X_0+W_s,a) + z\;b(s,X_0+W_s,a)$ and $\xi:=g(X_0+W_T)$.
\end{itemize}
 The use of BSDEs in this context aligns naturally with the value function approach and facilitates probabilistic representations of Hamilton--Jacobi--Bellman (HJB) equations. Compared to the maximum principle, in this non--degenerate setting, this formulation often provides greater flexibility, especially when dealing with non--smooth coefficients, path--dependence, or weak formulations of control.

\medskip
Another one of the most profound aspects of BSDE theory lies in its deep connection to partial differential equations (PDEs), particularly via the nonlinear Feynman–Kac formula. Let us provide some specifics
\begin{itemize}
    \item \textbf{From PDE to BSDE} Let $u:[0,T] \x \R$ be a smooth map satisfying: $u(T,\cdot)=g(\cdot)$ and for each $(t,x) \in [0,T) \x \R$,
    \begin{align} \label{eq:intro_PDE}
        \partial_t u(t,x) + H \left(t,x,u(t,x),\partial_x u(t,x) , \partial^2_xu(t,x) \right)=0\;\mbox{where}\;H(t,x,r,p,\gamma):=B(t,x)p+\frac{1}{2} \gamma + L(t,x,r,p).
    \end{align}
    Then $\left( u(t,X_t), \partial_x u(t,X_t) \right)_{t \in [0,T]}$ is a BSDE solution of \eqref{eq:intro_BSDE} when $f_t(y,z):=L(t,X_t,y,z)$, $\xi=g(X_T)$ and $\mathrm{d}X_t=B(t,X_t)\mathrm{d}t+\mathrm{d}W_t$.

    \item 
    \textbf{From BSDE to PDE} For any $(t_0,x_0)$, let $(Y^{t_0,x_0},Z^{t_0,x_0})$ be a BSDE solution when $f_t(y,z):=L(t,X^{t_0,x_0}_t,y,z)$, $\xi=g(X^{t_0,x_0}_T) $ and $X^{t_0,x_0}_{\cdot \vee t_0}=x_0 + \int_{t_0}^{\cdot \vee t_0} B(s, X^{t_0,x_0}_s)\mathrm{d}s + W_{\cdot \vee t_0}-W_{t_0}$. Then, under appropriate assumptions, the map $u(t,x):=Y^{t,x}_t$ is a viscosity solution of \eqref{eq:intro_PDE} (see \citeauthor*{pardoux1992backward} \cite{pardoux1992backward} and \citeauthor*{peng1991probabilistic} \cite{peng1991probabilistic}). 
\end{itemize}

\medskip
This connection between BSDEs and PDEs has led to fruitful developments in both theoretical and numerical aspects of control. For example, BSDE--based numerical schemes are now commonly used to approximate solutions of high--dimensional HJB equations, bypassing the curse of dimensionality typically associated with grid--based methods.

\medskip
These results emphasize the dual role of BSDEs, linking them both to semilinear PDEs via the nonlinear Feynman–Kac formula and to stochastic control problems through the dynamic programming principle. In this paper, we investigate how this correspondence extends to the mean--field setting. Let us mention what we mean by mean--field setting. The mean--field control (MFC) problem can be described by
\begin{align*}
    V:=\sup_{\alpha \in \Ac} \E \left[\int_0^T L(t,X^\alpha_t,\mu^\alpha_t,\alpha_t)\mathrm{d}t+g(X^\alpha_T) \right]\;\mbox{with}\;X^\alpha_\cdot=X_0 + \int_0^\cdot b(r,X^\alpha_r,\mu^\alpha_r,\alpha_r)\mathrm{d}r +W_\cdot\;\mbox{and}\;\mu^\alpha_r=\Lc(X^\alpha_r).
\end{align*}
and the PDE on the Wasserstein space is usually presented by looking for $u:[0,T] \x \Pc_2(\R)$ verifying $u(T,\cdot)=g(\cdot)$ and for $(t,m) \in [0,T) \x \Pc_2(\R)$,
\begin{align*}
    \partial_t u(t,m) + H \left(t,m,u,\partial_x \delta_m u , \partial^2_x \delta_mu \right)=0
\end{align*}
where $H (t,m, r,p,\gamma):=\int_\R p(t,m,x)\overline{B} \left(t, x, m \right) m(\mathrm{d}x) + \frac{1}{2} \int_\R \gamma(t,m,x) m(\mathrm{d}x) + \int_\R \overline{L} \left(t,x,m,r(t,m),p(t,m,x) \right) m(\mathrm{d}x),$ and $\delta_m u$ is the linear derivative w.r.t. to the measure (see the rigorous definition in \eqref{eq:def_derivative_measure}).  

\medskip
These problems have attracted significant interest in recent years due to the rich mathematical structure they present and their relevance in various applications, including economics, finance, and large population systems. For MFC problem, see \citeauthor*{andersson2011maximum} \cite{andersson2011maximum}, \citeauthor*{buckdahn2011general} \cite{buckdahn2011general}, \citeauthor*{bensoussan2013mean} \cite{bensoussan2013mean}, \citeauthor*{Carmona2013ForwardBackwardSD} \cite{Carmona2013ForwardBackwardSD}, \citeauthor*{pham2018bellman} \cite{pham2018bellman,pham2016dynamic}, \citeauthor*{lacker2017limit} \cite{lacker2017limit}, \citeauthor*{djete2019mckean} \cite{djete2019mckean,djete2019general} $\dots$.  See
\citeauthor*{Gangbo2008Hamilton} \cite{Gangbo2008Hamilton}, \citeauthor*{Buckdahn2014meanfield} \cite{Buckdahn2014meanfield}, \citeauthor*{Wu2018ViscosityST}  \cite{Wu2018ViscosityST}, \citeauthor*{CardaliaguetRegularity2023} \cite{CardaliaguetRegularity2023}, 
\citeauthor*{cosso2022masterbellmanequationwasserstein} \cite{cosso2022masterbellmanequationwasserstein}, \citeauthor*{meteViscosity2024} \cite{meteViscosity2024}, \citeauthor*{Bayraktar03042025} \cite{Bayraktar03042025}, \citeauthor*{Daudin01022025} \cite{Daudin01022025},   \citeauthor{bertucci2024stochasticoptimaltransporthamiltonjacobibellman}  \cite{bertucci2024stochasticoptimaltransporthamiltonjacobibellman}, \dots, for PDE over Wasserstein space.

\medskip
In this paper, we investigate the following central question: what is the appropriate extension of the classical BSDE pair $(Y,Z)$ that captures the dual relationship with both mean--field control problems and PDEs over the Wasserstein space ?

\medskip
We argue that the classical formulation of BSDEs is inadequate to capture the duality structure in the mean--field setting. A key limitation arises from the shift in the nature of the state variable: whereas in classical stochastic control the primary process is the controlled state trajectory $(X^\alpha_t)_{t \in [0,T]}$, it is now well understood that, in mean--field control problems, the central object is the law of the state process $(\mu^\alpha_t)_{t \in [0,T]}$. In the classical framework, the solution process $(Y_t)_{t \in [0,T]}$ of the BSDE represents the dynamic version of the value function, that is, $Y_t=V(t,X^{\alpha^\star}_t)$  along the optimal trajectory. By analogy, in the mean--field setting one would expect $Y_t=V(t,\mu^{\alpha^\star}_t)$  implying that $(Y_t)_{t \in [0,T]}$ is a deterministic process.

\medskip
However, this leads to a contradiction. If $Y$ is deterministic, then the martingale component $Z$ must vanish. Yet, as previously discussed ( see \eqref{eq:intro_optimal} ), the $Z$ process plays a critical role in the construction of the optimal control. Thus, a deterministic $Y$ process with $Z=0$ fails to capture the necessary information structure required for optimality.

\medskip
This inconsistency motivates the introduction of a new class of BSDEs, specifically designed to incorporate the distributional dependence inherent to mean--field control problems. Such a formulation aims to restore the correspondence between BSDEs, the value function, and the associated PDE on the Wasserstein space.

\medskip
In light of the observation that the primary object of interest is the probability distribution, a notion of a BSDE on the space of probability measures (the Wasserstein space) is introduced. In this framework, the process $Y$ is defined on  $[0,T] \x \Pc(\R)$. Furthermore, drawing on the concept of the linear functional derivative in the space of probability measures, the process $Z$ is defined on $[0,T] \x \R \x \Pc(\R)$. The precise definition is provided in \Cref{def:W-BSDE}.

\medskip
Under appropriate regularity and structural assumptions, we demonstrate that this generalized BSDE maintains a duality with the mean--field (McKean--Vlasov) control problem analogous to the well--established correspondence between classical BSDEs and standard stochastic control problems. Moreover, we establish a comparison principle for this new class of BSDEs, which in turn enables us to link their solutions to partial differential equations defined over the Wasserstein space, mirroring the classical connection between BSDEs and PDEs in finite--dimensional settings. 

\medskip
This perspective offers a unified approach to the study of mean--field control and PDEs over the Wasserstein space, potentially enriching the available tools for analyzing such problems. In the case of generators that are linear or quadratic in the $z$--variable, we are able to establish existence results for solutions of the proposed BSDEs, which in turn provide solutions to the associated mean--field control problems and PDEs on the space of probability measures.

\medskip
Although a general existence theory remains beyond the scope of this current work, this limitation is consistent with the known challenges in solving mean--field control problems and PDEs in the Wasserstein space. Nevertheless, the proposed formulation introduces a novel and coherent framework that opens new avenues for further research in this direction.

\medskip
An informed reader may notice the absence of common noise in our framework. This omission is intentional, motivated by two considerations. First, the no--common noise setting already poses significant mathematical challenges and has broad applicability. Second, the inclusion of common noise, with the techniques presently available to us, would necessitate a non--degenerate structure, thereby excluding the class of problems that arise uniquely in its absence i.e no--common noise case. We plan to investigate this direction in future work.



\medskip
The remainder of the paper is structured as follows. We begin by recalling some basic notations and introducing the probabilistic framework necessary to rigorously define the tools employed throughout the paper. In \Cref{sec:main}, we present our notion of BSDE on the Wasserstein space along with the main results. After providing the definition and the underlying intuition, \Cref{subsec:MCKean} explores the connection between our BSDE formulation and mean field control (MFC) problems. In \Cref{subsec:PDE}, we establish the duality with partial differential equations (PDEs) defined over the Wasserstein space. \Cref{subsec:comparison} is dedicated to a comparison principle and uniqueness result for the proposed BSDE. Finally, \Cref{subsec:existence} addresses the well--posedness of the equation under various conditions. All proofs are collected in \Cref{sec:proof}.


\medskip
{\bf \large Notations}.
	$(i)$
	Given a {\color{black}Polish} space $(E,\Delta)$ and $p \ge 1,$ we denote by $\Pc(E)$ the collection of all Borel probability measures on $E$,
	and by $\Pc_p(E)$ the subset of Borel probability measures $\mu$ 
	such that $\int_E \Delta(e, e_0)^p  \mu(de) < \infty$ for some $e_0 \in E$. The topology induced by the weak convergence will be used for $\Pc(E)$ and, for $p \ge 1$, we equip $\Pc_p(E)$ with the Wasserstein metric $\Wc_p$ defined by
	$
		\Wc_p(\mu , \mu') 
		~:=~
		\bigg(
			\inf_{\lambda \in \Lambda(\mu, \mu')}  \int_{E \x E} \Delta(e, e')^p ~\lambda( \mathrm{d}e, \mathrm{d}e') 
		\bigg)^{1/p},
	$
	where $\Lambda(\mu, \mu')$ denotes the collection of all probability measures $\lambda$ on $E \x E$ 
	such that $\lambda( \mathrm{d}e, E) = \mu$ and $\lambda(E,  \mathrm{d}e') = \mu'( \mathrm{d}e')$. Equipped with $\Wc_p,$ $\Pc_p(E)$ is a Polish space (see \cite[Theorem 6.18]{villani2008optimal}). 

\medskip
    \noindent $(ii)$ 
	Given a probability space $(\Om, \Hc, \P)$ supporting a sub--$\sigma$--algebra $\Gc \subset \Hc$ then for a Polish space $E$ and any random variable $\xi: \Om \longrightarrow E$, both the notations $\Lc^{\P}( \xi | \Gc)(\om)$ and $\P^{\Gc}_{\om} \circ (\xi)^{-1}$ are used to denote the conditional distribution of $\xi$ knowing $\Gc$ under $\P$.

\medskip

\medskip
    \noindent $(iii)$
	Let $\N^*$ denote the set of positive integers. Let $T > 0$ and $(\Sigma,\rho)$ be a Polish space, we denote by $C([0,T]; \Sigma)$ the space of all continuous functions on $[0,T]$ taking values in $\Sigma$.
	When $\Sigma=\R^k$ for some $k\in\N^*$, we simply write $\Cc^k := C([0,T]; \R^k)$. 

\medskip
    Given a map $U: \Pc(\R) \to \R$. we will say that a Borel map $F:\Pc(\R) \x \R \to \R$ is a linear functional derivative of $U$ if: for each $m$ and $m'$, we have $ \int_0^1 \int_\R \left|F\left(e\; m + (1-e)\; m',x\right) \right| \left(m+m' \right)(\mathrm{d}x)\; \mathrm{d}e< \infty$ and
    \begin{align} \label{eq:def_derivative_measure}
        U(m)-U(m')=\int_0^1 \int_\R F\left(e\; m + (1-e)\; m',x\right) \left(m-m' \right)(\mathrm{d}x) \mathrm{d}e.
    \end{align}
    We will denote $F$ by $\delta_m U$. Since $\delta_m U$ is defined up to a constant, we use the convention, $\int_\R \delta_m U(m,x)m(\mathrm{d}x)=0$ whenever $\int_\R |\delta_m U(m,x)|m(\mathrm{d}x) < \infty$.

\section{The notion of BSDE on the Wasserstein space and its applications} \label{sec:main}


Throughout this paper, we set a maturity time $T > 0$ and a probability space $(\Om,\H:=(\Hc_t)_{t \in [0,T]},\Hc,\P)$\footnote{ \label{footnote_enlarge}The probability space $(\Om,\H,\P)$ contains as many random variables as we want in the sense that: each time we need a sequence of independent uniform random variables or Brownian motions, we can find them in $\Om$ with the distribution $\P$ without mentioning an enlarging of the space. } supporting a real--valued $\H$--Brownian motion $(W_t)_{t \in [0,T] }$. We will say that $\mu:=(\mu_t)_{t \in [0,T]} \subset \Pc(\R)$ belongs to $\Lc$ if, for each $t$, $\mu_t=\Lc(X_t)$ for some process $(X_t)_{t \in [0,T]}$ satisfying $\mathrm{d}X_t=B(t,X_t) \mathrm{d}t + \mathrm{d}W_t$ where $B:[0,T] \x \R \to \R$ is a Borel bounded map. Let $f: [0,T] \x \R \x \Pc(\R) \x \R^2 \to \R$ be a Borel map that represents the generator, and $\psi:\Pc(\R) \to \R$ be a Borel map that plays the role of a terminal condition. The assumptions over $f$ and $\psi$ will be discussed later.

\paragraph*{Intuition of the defintion} 
Before introducing the formal definition, we provide some intuition based on an ``$n$--particle'' representation. Consider processes $(Y^n, Z^{1,n}, \dots, Z^{n,n})$ satisfying, for all $t \in [0,T]$,
\begin{align} \label{eq:n-BSDE-system}
    Y^n_t = \psi\left( \mu^n_T \right) + \frac{1}{n} \sum_{i=1}^n \int_t^T f\left(s, X^i_0 + W^i_s, \mu^n_s, Y^n_s, n Z^{i,n}_s\right) \, \mathrm{d}s - \frac{1}{n} \sum_{i=1}^n \int_t^T n Z^{i,n}_s \, \mathrm{d}W^i_s,
\end{align}
where $(W^i)_{i \ge 1}$ is a sequence of independent Brownian motions and the empirical measure $\mu^n_t$ is given by $\mu^n_t := \frac{1}{n} \sum_{i=1}^n \delta_{X^i_0 + W^i_t}.$ Under mild regularity assumptions, the existence and uniqueness of a solution to this system can be established using classical results from the BSDE literature. Given the structure of the system, it is natural to conjecture that the processes take the form $Y^n_t = Y(t, \mu^n_t) + \Oc(1/\sqrt{n})$ and $nZ^{i,n}_t = Z(t, X^i_0 + W^i_t, \mu^n_t) + \Oc(1/\sqrt{n})$ for some suitable maps $Y$ and $Z$. 
Let $\mu \in \Lc$ associated to a Borel bounded map $B:[0,T] \x \R \to \R$. We set $\mathrm{d}\P^n:=L^n_T \mathrm{d}\P$ where $\mathrm{d}L^n_t=L^n_t \sum_{i=1}^nB(t,X^i_0+W^i_t)\mathrm{d}W^i_t$ and $L^n_0=1$. The sequence $(X^i_0)_{i \ge 1 }$ is i.i.d with law $\mu_0$. By Girsanov Theorem, $\Lc(X) \otimes \cdots\otimes\Lc(X)=\Lc^{\P^n}(X_0^1+W^1,\cdots,X^n_0 + W^n)$ where $X$ is the process s.t. $\mu_t=\Lc(X_t)$. By going to the limit under $\P^n$ in \eqref{eq:n-BSDE-system}, we can obtain
\begin{align*}
    &Y(t,\mu_t)=\lim_{n \to \infty} \E^{\P^n}[Y^n_t]
    \\
    &=\lim_{n \to \infty}\E^{\P^n} \bigg[\psi\left( \mu^n_T \right) + \frac{1}{n} \sum_{i=1}^n \int_t^T f\left(s, X^i_0 + W^i_s, \mu^n_s, Y(s,\mu^n_s),  Z(s,X^i_0 + W^i_s,\mu^n_s)\right) \, \mathrm{d}s 
    \\
    &~~~~~~~~~~~~~~~~~~~~~~~~~~~~~~~~~~~~~~~~~~~~~~~~~~~~~~~~~~~~~~~~- \frac{1}{n} \sum_{i=1}^n \int_t^T  Z(s,X^i_0 + W^i_s,\mu^n_s) \, \mathrm{d}W^i_s \bigg]
    \\
    &=\psi(\mu_T) +  \E \left[ \int_t^T f \left( r, X_r, \mu_r, Y(r,\mu_r), Z \left(r,X_r, \mu_r \right) \right) \mathrm{d}r \right] - \E\left[ \int_t^T Z\left(r,X_r,\mu_r \right) B(r,X_r) \mathrm{d}r \right].
\end{align*}
The preceding discussion naturally leads to the following definition of a BSDE on the Wasserstein space.
We consider two Borel maps 
$$
    Y:[0,T] \x \Pc(\R) \to \R\;\mbox{and}\;Z:[0,T] \x \Pc(\R) \x \R \to \R.
$$

\begin{definition} \label{def:W-BSDE}
    We say that $(Y,Z)$ is a {\rm BSDE} solution on the Wasserstein space associated to $(f,\psi)$ if: for any $\mu \in \Lc$ associated to a drift $B$, the map $t \mapsto Y(t,\mu_t)$ is continuous,
    $$
        \E \left[ \int_0^T \left|f \left( r, X_r, \mu_r, Y(r,\mu_r), Z \left(r, X_r, \mu_r \right) \right) \right| + \left|Z\left(r, X_r,\mu_r \right) \right|\; \mathrm{d}r \right] < \infty,
    $$
    and for any $t \in [0,T]$,
    \begin{align} \label{eq:WBSDE_def}
        Y(t,\mu_t)
        =
        \psi(\mu_T) + \E \left[ \int_t^T f \left( r, X_r, \mu_r, Y(r,\mu_r), Z \left(r,X_r, \mu_r \right) \right) \mathrm{d}r \right] - \E\left[ \int_t^T Z\left(r,X_r,\mu_r \right)B(r,X_r)\; \mathrm{d}r \right].
    \end{align}
\end{definition}
To simplify notation, we refer to the pair 
$(Y,Z)$ as a solution to a Wasserstein BSDE (W--BSDE).

\begin{remark}
    {\rm (i)} The {\rm\Cref{eq:WBSDE_def}} must be verified by any family of probability measures $(\mu_t)_{t \in [0,T]}$ arising from a non--degenerate SDE i.e. $\Lc(X_t)=\mu_t$ with $\mathrm{d}X_t=B(t,X_t) \mathrm{d}t + \mathrm{d}W_t$ where $B:[0,T] \x \R \to \R$ is a Borel bounded map. The boundedness of $B$ can be relaxed and replaced by some linear growth condition as well as $\mathrm{d}W_t$ by $\sigma(t,X_t) \mathrm{d}W_t$ with $\sigma$ a non--degenrate diffusion coefficient. The non--degenracy aspect is important since we need to work with equivalent probability measures. Although our analysis is carried out in the one--dimensional case for clarity, the approach extends naturally to $\R^d$ with $d \in \{1,2,\cdots\}$. We adopt this simplified setting to improve readability and to emphasize the central ideas underlying this novel concept.

\medskip
{\rm (ii)} Although \eqref{eq:WBSDE_def} is a deterministic equation, the terminology ``{\rm BSDE}$"$ $($Backward Stochastic Differential Equation$)$ is retained for two reasons. First, it extends the classical notion of a {\rm BSDE} in a specific sense $($see {\rm \Cref{prop:linear}} for details$)$. Second, it preserves key structural constraints analogous to those in stochastic {\rm BSDEs}: the solutions $(Y, Z)$ must be adapted--i.e., at each time $t$, they depend only on $\mu_t$ and $X_t$—while ensuring the terminal condition $Y(T,\cdot) = \psi(\cdot)$ is met. This adaptedness property, combined with the requirement to hit a prescribed target at maturity, is central to the philosophy of classical {\rm BSDEs}.

\medskip
    {\rm (iii)} The joint consideration of $(Y, Z)$ in \eqref{eq:WBSDE_def} is essential, as the adaptedness constraint and the terminal condition cannot be decoupled. Neither $Y$ nor $Z$ alone suffices to encode these dynamics properly.

\medskip
{\rm (iv)} Observe that, under the square integrability condition \( \mathbb{E} \left[ \int_0^T |Z(r, X_r, \mu_r)|^2 \, \mathrm{d}r \right] < \infty \), it follows that the stochastic integral vanishes i.e. \( \mathbb{E} \left[ \int_0^T Z(r, X_r, \mu_r) \, \mathrm{d}W_r \right]=0 \). In this case, {\rm \Cref{eq:WBSDE_def}} can be rewritten in a more compact form:
\begin{align*}
    Y(t,\mu_t)
        =
        \psi(\mu_T) + \E \left[ \int_t^T f \left( r, X_r, \mu_r, Y(r,\mu_r), Z \left(r,X_r, \mu_r \right) \right) \mathrm{d}r \right] - \E\left[ \int_t^T Z\left(r,X_r,\mu_r \right) \mathrm{d}X_r \right].
\end{align*}
It is worth emphasizing that, in contrast to the classical case, the square integrability of \( Z \) is not a necessary condition for handling this formulation of {\rm BSDE}.


\medskip
    {\rm (v)} Finally, as with any differential equation, the initial time can be shifted: the equation remains well--posed when initialized at an arbitrary $t_0 \in [0,T]$ rather than at $0$ $($ see the definition in {\rm\Cref{eq:WBSDE_def_varrying_time}}$)$.

\end{remark}

\paragraph*{Coefficients with no--measure dependence} To gain initial insight into this definition, it is instructive to examine how it specializes in the classical setting, where the coefficients do not involve any dependence on a probability measure.  Let us consider the case where $\psi$ and $f$ are independent of $m$ i.e. $\left(\psi(m), f(t,x,m,y,z) \right)=\left(\langle \overline g, m \rangle, \overline f(t,x,z) \right)$ for some maps $x \mapsto \overline g(x)$ and $(t,x,z) \mapsto \overline f(t,x,z)$. We assume that $(\overline{g},\overline{f})$ is bounded and Lipschitz in $z$, uniformly in $(t,x)$. Let $(\overline Y, \overline{Z})$ be the $\left(\sigma \left( X_0, W_r:\; r \le t \right) \right)_{t \in [0,T]}$--adapted process satisfying: for all $t \le T$, a.e.
\begin{align*}
    \overline Y_t= \overline{g} \left(X_0+W_T  \right) + \int_t^T \overline{f} \left(r, X_0+W_r,Z_r \right) \mathrm{d}r - \int_t^T \overline{Z}_r \mathrm{d}W_r.
\end{align*}
Under the assumptions of $(\overline{g},\overline{f})$, the couple of processes $(\overline Y, \overline Z)$ are well defined, and they satisfy the integrability condition $\E \left[ \sup_{t \in [0,T]} |\overline{Y}_t|^2 + \int_0^T |\overline{Z}_t|^2 \mathrm{d}t \right] < \infty$ (see \cite{pardoux1990adapted}, and also \cite{JinRepresentation2002}). In addition, there exist Borel bounded map $u:[0,T] \x \R \to \R$ and Borel map  $v:[0,T] \x \R \to \R$ verifying $\overline{Y}_t=u(t,X_0+W_t)$ and $\overline{Z}_t=v(t,X_0+W_t)$ a.e.  We define
\begin{align*}
    \widehat{Y}(t,m)
    := \int_\R u(t,x) m(\mathrm{d}x)\;\mbox{and}\;\widehat{Z}(t,m,x):=v(t,x).
\end{align*}
\begin{proposition} \label{prop:linear}
    The couple $( \widehat{Y}, \widehat{Z})$ is a {\rm W--BSDE} solution in the sense of {\rm \Cref{def:W-BSDE}}.
\end{proposition}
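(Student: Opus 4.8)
The plan is to transfer the classical Markovian BSDE identity satisfied by $(u,v)$ to the drifted dynamics by a Girsanov change of measure, and then to read off \eqref{eq:WBSDE_def} by taking expectations. Fix $\mu \in \Lc$ associated with a bounded drift $B$ and with initial marginal $\mu_0$; using that the space $(\Om,\H,\P)$ is rich enough, I would choose $X_0$ with law $\mu_0$ independent of $W$ and set $\Xi_r := X_0 + W_r$, the driftless process under $\P$. By the cited representation, the pathwise identity
\begin{align*}
    u(t, \Xi_t) = \overline{g}(\Xi_T) + \int_t^T \overline{f}\big(r, \Xi_r, v(r, \Xi_r)\big)\,\mathrm{d}r - \int_t^T v(r, \Xi_r)\,\mathrm{d}W_r
\end{align*}
holds $\P$--a.s. for every $t$, with $v(r,\Xi_r) = \overline{Z}_r$, so in particular $\E\big[\int_0^T v(r,\Xi_r)^2\,\mathrm{d}r\big] < \infty$ and $u$ is bounded.

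Next I would introduce $\mathrm{d}\Q := \Ec\big(\int_0^\cdot B(r,\Xi_r)\,\mathrm{d}W_r\big)_T\,\mathrm{d}\P$. Since $B$ is bounded, $\Q \sim \P$, the density $\tfrac{\mathrm{d}\Q}{\mathrm{d}\P}$ has moments of all orders, and $W^{\Q}_\cdot := W_\cdot - \int_0^\cdot B(r,\Xi_r)\,\mathrm{d}r$ is a $\Q$--Brownian motion. Hence under $\Q$ the process $\Xi$ solves $\mathrm{d}\Xi_r = B(r,\Xi_r)\,\mathrm{d}r + \mathrm{d}W^{\Q}_r$ with initial law $\mu_0$ (the change of measure leaves the $\Hc_0$--law of $X_0$ unchanged, as $\Q|_{\Hc_0}=\P|_{\Hc_0}$), so by weak uniqueness for this SDE, $\Lc^{\Q}(\Xi_r) = \mu_r = \Lc^{\P}(X_r)$ for every $r$. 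Rewriting $\mathrm{d}W_r = \mathrm{d}W^{\Q}_r + B(r,\Xi_r)\,\mathrm{d}r$ in the identity above and taking $\E^{\Q}$ yields
\begin{align*}
    \E^{\Q}\big[u(t,\Xi_t)\big]
    &= \E^{\Q}\big[\overline{g}(\Xi_T)\big]
    + \E^{\Q}\Big[\int_t^T \overline{f}\big(r,\Xi_r,v(r,\Xi_r)\big)\,\mathrm{d}r\Big] \\
    &\quad - \E^{\Q}\Big[\int_t^T v(r,\Xi_r)\,B(r,\Xi_r)\,\mathrm{d}r\Big]
    - \E^{\Q}\Big[\int_t^T v(r,\Xi_r)\,\mathrm{d}W^{\Q}_r\Big].
\end{align*}
Invoking $\Lc^{\Q}(\Xi_r)=\mu_r=\Lc^{\P}(X_r)$ together with $\widehat{Y}(t,m)=\int u(t,x)\,m(\mathrm{d}x)$, $\widehat{Z}(t,m,x)=v(t,x)$, $\psi(m)=\langle\overline{g},m\rangle$ and $f(r,x,m,y,z)=\overline{f}(r,x,z)$, the left-hand side equals $\widehat{Y}(t,\mu_t)$ while the first three terms on the right equal $\psi(\mu_T)$, $\E\big[\int_t^T f(\cdots)\,\mathrm{d}r\big]$ and $\E\big[\int_t^T \widehat{Z}(r,X_r,\mu_r)\,B(r,X_r)\,\mathrm{d}r\big]$, which are precisely the terms appearing in \eqref{eq:WBSDE_def}.

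The crux is therefore to show that the last term vanishes, i.e. that $\int_t^\cdot v(r,\Xi_r)\,\mathrm{d}W^{\Q}_r$ is a genuine $\Q$--martingale and not merely a local one; this is the delicate point, since under $\P$ only the plain estimate $\E\big[\int_0^T v(r,\Xi_r)^2\,\mathrm{d}r\big]<\infty$ is immediately at hand. Here the boundedness of $(\overline{g},\overline{f})$ is essential: bounded terminal data and bounded generator make the martingale $\int \overline{Z}\,\mathrm{d}W$ a BMO martingale, hence $\int_0^T \overline{Z}_r^2\,\mathrm{d}r$ has moments of all orders, and Cauchy--Schwarz gives $\E^{\Q}\big[\int_0^T v(r,\Xi_r)^2\,\mathrm{d}r\big]\le \big\|\tfrac{\mathrm{d}\Q}{\mathrm{d}\P}\big\|_{L^2}\,\big\|\int_0^T v(r,\Xi_r)^2\,\mathrm{d}r\big\|_{L^2}<\infty$; thus the integral is a square-integrable $\Q$--martingale with zero expectation. (Alternatively, expanding $\mathrm{d}(L_rM_r)$ for $L:=\Ec(\int_0^\cdot B(r,\Xi_r)\,\mathrm{d}W_r)$ and $M:=\int_0^\cdot v(r,\Xi_r)\,\mathrm{d}W_r$ and controlling the resulting stochastic integral by Burkholder--Davis--Gundy lets one conclude using only the plain $L^2$--bound.)

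Finally I would verify the two remaining requirements of \Cref{def:W-BSDE}. The integrability condition follows from $\overline{f}$ bounded (controlling the $f$--term) and from transporting $\E\big[\int_0^T |v(r,\Xi_r)|\,\mathrm{d}r\big]<\infty$ to $\Q$ by the same Cauchy--Schwarz estimate (controlling the $Z$--term). The continuity of $t\mapsto \widehat{Y}(t,\mu_t)$ is then immediate from the just-established representation \eqref{eq:WBSDE_def}, since its right-hand side is $\psi(\mu_T)$ minus integrals over $[t,T]$ of $\P$--integrable integrands, hence absolutely continuous in $t$.
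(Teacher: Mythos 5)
Your proposal is correct and follows essentially the same route as the paper's proof: a Girsanov change of measure $\mathrm{d}\Q = \Ec\big(\int_0^\cdot B(r,\Xi_r)\,\mathrm{d}W_r\big)_T\,\mathrm{d}\P$ identifying $\Lc^{\Q}(\Xi_r)=\mu_r$, transfer of the classical BSDE identity by taking $\E^{\Q}$, and the Cauchy--Schwarz estimate $\E^{\Q}\big[\int_0^T |v(r,\Xi_r)|\,\mathrm{d}r\big] \le \|L_T\|_{L^2}\,\E\big[\int_0^T |\overline{Z}_r|^2\,\mathrm{d}r\big]^{1/2}$ for the integrability requirement. Your only departure is cosmetic: you spell out why $\int v(r,\Xi_r)\,\mathrm{d}W^{\Q}_r$ is a true (not merely local) $\Q$--martingale, a step the paper leaves implicit, and you obtain continuity of $t\mapsto \widehat{Y}(t,\mu_t)$ from absolute continuity of the representation rather than from dominated convergence applied to $\E^{\P^\mu}[\overline{Y}_t]$.
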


\begin{proof}
    Let $\mu \in \Lc$ associated to $B$. We introduce the process $L$ verifying: $\mathrm{d}L_t= L_t B(t,X_0+W_t) \mathrm{d}W_t$ where $\Lc(X_0)=\mu_0$. Set $\mathrm{d}\P^{\mu}:= L_T \mathrm{d}\P$. By Girsanov Theorem, $W^B_\cdot:=W_\cdot-\int_0^\cdot B(s,X_0+W_s) \mathrm{d}s$ is a $\P^\mu$--Brownian motion. Notice that
    \begin{align*}
        \widehat{Y}(t,\mu_t)= \E\left[u(t, X_t) \right]=\E^{\P^\mu}[u(t,X_0+W_t)]=\E^{\P^\mu}\left[\overline{Y}_t\right].
    \end{align*}
    Since a.e., the map $t \mapsto \overline{Y}_t$  is continuous, we deduce that $t \mapsto \widehat{Y}(t,\mu_t)=\E^{\P^\mu}\left[\overline{Y}_t\right]$ is continuous. In addition,
    \begin{align*}
        \E \left[\int_0^T |v(t,X_t)| \mathrm{d}t \right] =\E^{\P^\mu} \left[\int_0^T |Z_t| \mathrm{d}t \right]= \E\left[L_T\int_0^T |Z_t| \mathrm{d}t \right] \le \E[TL_T^2]^{1/2} \E\left[ \int_0^T |Z_t|^2  \mathrm{d}t \right]^{1/2} < \infty
    \end{align*}
    and
    \begin{align*}
        \widehat{Y}(t,\mu_t)
        &=
        \E^{\P^\mu}\left[\overline{Y}_t\right]= \E^{\P^\mu}\left[\overline{g}(X_0+W_T) + \int_t^T \overline{f} \left(r, X_0+W_r,Z_r \right) \mathrm{d}r - \int_t^T \overline{Z}_r \mathrm{d}W_r \right]
        \\
        &=
        \E\left[\overline{g}(X_T) + \int_t^T \overline{f} \left(r, X_r,v(r,X_r) \right) \mathrm{d}r - \int_t^T v(r,X_r)B(r,X_r) \mathrm{d}r \right].
    \end{align*}
    This is enough to conclude the proof.
\end{proof}

The question of the well--posedness of the couple $(Y,Z)$ will be discussed in \Cref{subsec:existence} as well as the assumptions over $(f,\psi)$. Let us start by discussing the connection of our definition of {\rm W--BSDE} with the mean--field control problem and the PDEs over the Wasserstein space.

\subsection{Application to the optimal control of McKean--Vlasov equations} \label{subsec:MCKean}

In this section, we establish the dual relationship between our notion of W--BSDE and the mean--field control (MFC) problem. Specifically, we show that a solution to a W--BSDE can be used to construct a solution to the MFC problem, and conversely, that any solution to the MFC problem leads to a W--BSDE solution.

\medskip
Let us introduce the formulation of the optimal control of McKean--Vlasov equations used in the literature (see \cite{djete2019mckean}). Let $\Ac$ be a set of controls which are the set of Borel bounded maps $\alpha:[0,T] \x \R \to A$ for some compact set $A$. Let $(t,x,m,a) \mapsto b(t,x,m,a)$ and $(t,x,m,a) \mapsto L(t,x,m,a)$ be bounded Borel maps s.t. $b$ is Lipschitz in $(x,m)$ uniformly in $(t,a)$. We also assume that $\psi$ is bounded.
Let $t \in [0,T]$ be the initial time and $\nu \in \Pc(\R)$ be the initial distribution. For any $\alpha \in \Ac$, we introduce the process $X^{t,\nu,\alpha}:=X$ where: $\Lc(X_r)=\nu$ for each $r \in [0,t]$, $t \le s \le T$,
\begin{align*}
    \mathrm{d}X_s= b \left(s, X_s, \mu^\alpha_s, \alpha(s,X_s) \right) \mathrm{d}s + \mathrm{d}W_s\;\mbox{with}\;\mu^\alpha_s=\Lc(X_s).
\end{align*}
The process $X^{t,\nu,\alpha}$ is well defined (see, for instance, \cite{lacker2018OnStrong} or \cite[Theorem A.2.]{djete2019general}). The value function is
\begin{align*}
    V(t,\nu):= \sup_{\alpha \in \Ac} J(t,\nu,\alpha)\;\mbox{where}\;J(t,\nu,\alpha):=\E \left[ \int_t^T L \left(s, X^{t,\nu,\alpha}_s, \mu^\alpha_s, \alpha(s,X^{t,\nu,\alpha}_s) \right) \mathrm{d}s + \psi(\mu^\alpha_T) \right].
\end{align*}
We introduce $h(s,x,m,z,a):=L(s,x,m,a) + b(s,x,m,a) z$, $f(s,x,m,z):=\sup_{a \in A} h(s,x,m,z,a)$ and the \underline{unique} map $\widehat{a}$ verifying 
$$
    \widehat{a}(s,x,m,z):=\arg \max_{a \in A} h(s,x,m,z,a).
$$

\medskip
We consider $(Y,Z)$ a W--BSDE solution associated to $(f,\psi)$. Let $\mu \in \Lc$. We assume that there exists a process $R$ satisfying: $\Lc(R_r)=\mu_t$ for each $r \in [0,t]$, and $t \le s \le T$,
\begin{align} \label{eq:resolvant_McK}
     \mathrm{d}R_s= b \left(s, R_s, \nu_s, \widehat{a}(s,R_s,\nu_s, Z (s,R_s,\nu_s)) \right) \mathrm{d}s + \mathrm{d}W_s\;\mbox{with}\;\nu_s=\Lc(R_s).
\end{align}
We set $\widehat{\alpha}(s,x):=\widehat{a}(s,x,\nu_s, Z (s,x,\nu_s))$.

\begin{proposition} \label{prop:McK-fromBSDE}
    The control $\widehat{\alpha}$ is an optimal control for $V(t,\mu_t)$ and 
    $$
        Y(t,\mu_t)=V(t,\mu_t).
    $$
    In addition, for any other optimal control $\widetilde{\alpha}$, we have $\P$--a.e. for a.e. $r \in [t,T]$  
    $$
        \widetilde{\alpha}(r, X^{t,\mu_t,\tilde{\alpha}}_r)=\widehat{a}(r,X^{t,\mu_t,\tilde{\alpha}}_r,\mu^{\tilde \alpha}_r, Z (r,X^{t,\mu_t,\tilde{\alpha}}_r,\mu^{\tilde \alpha}_r)).
    $$
\end{proposition}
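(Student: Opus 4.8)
The plan is to run the classical BSDE verification argument, transposed to the Wasserstein formulation, in three moves: first show that $Y(t,\mu_t)$ dominates $J(t,\mu_t,\alpha)$ for \emph{every} admissible $\alpha\in\Ac$; then show that the candidate $\widehat\alpha$ attains this upper bound, which simultaneously yields optimality of $\widehat\alpha$ and the identity $Y(t,\mu_t)=V(t,\mu_t)$; finally deduce the pointwise characterization of an arbitrary optimal control from the equality case of the first step.

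For the upper bound, fix $\alpha\in\Ac$, set $X:=X^{t,\mu_t,\alpha}$ and $\mu^\alpha_s=\Lc(X_s)$. The structural observation is that, once the McKean--Vlasov fixed point is resolved, $X$ solves the non--degenerate SDE $\mathrm{d}X_s=B^\alpha(s,X_s)\,\mathrm{d}s+\mathrm{d}W_s$ with the genuinely $(s,x)$--dependent bounded Borel drift $B^\alpha(s,x):=b(s,x,\mu^\alpha_s,\alpha(s,x))$; hence $\mu^\alpha\in\Lc$, and the W--BSDE relation \eqref{eq:WBSDE_def}, in its time--shifted form starting at $t$, applies to $\mu^\alpha$ with drift $B^\alpha$. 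Since here $f$ is independent of the $y$--slot and $f=\sup_{a}h(\cdots,a)\ge h(\cdots,\alpha(s,X_s))$, combining the two expectation terms gives, pointwise,
$$ f\bigl(r,X_r,\mu^\alpha_r,Z(r,X_r,\mu^\alpha_r)\bigr)-Z(r,X_r,\mu^\alpha_r)\,B^\alpha(r,X_r)\ \ge\ L\bigl(r,X_r,\mu^\alpha_r,\alpha(r,X_r)\bigr). $$
As $\mu^\alpha_t=\mu_t$, this yields $Y(t,\mu_t)=Y(t,\mu^\alpha_t)\ge J(t,\mu_t,\alpha)$, and taking the supremum over $\alpha$ gives $Y(t,\mu_t)\ge V(t,\mu_t)$. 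The integrability needed to split and recombine these integrals follows from the defining integrability condition of the W--BSDE together with the boundedness of $b$ and $L$.

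For the equality I take $\alpha=\widehat\alpha$. Since $R$ solves the controlled equation \eqref{eq:resolvant_McK} associated with $\widehat\alpha$ and $\nu_s=\Lc(R_s)$, uniqueness of the McKean--Vlasov SDE forces $\mu^{\widehat\alpha}=\nu$; now $\widehat\alpha(r,X_r)=\widehat a(r,X_r,\nu_r,Z(r,X_r,\nu_r))$ is exactly the $\arg\max$ defining $f$, so the displayed inequality becomes an equality and $Y(t,\mu_t)=J(t,\mu_t,\widehat\alpha)$. With the first step this pins down $V(t,\mu_t)=Y(t,\mu_t)$ and the optimality of $\widehat\alpha$. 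For any optimal $\widetilde\alpha$, subtracting the representation of the first step from $J(t,\mu_t,\widetilde\alpha)=Y(t,\mu_t)$ shows that the nonnegative integrand $f(\cdots)-h(\cdots,\widetilde\alpha(r,X_r))$ has zero expectation over $[t,T]\times\Omega$; hence it vanishes $\P\otimes\mathrm{d}r$--a.e., so $\widetilde\alpha(r,X^{t,\mu_t,\tilde\alpha}_r)$ maximizes $h(r,X^{t,\mu_t,\tilde\alpha}_r,\mu^{\tilde\alpha}_r,Z(\cdots),\cdot)$, and uniqueness of the maximizer $\widehat a$ delivers the claimed identity.

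The routine parts (integrability bookkeeping, admissibility/measurability of $\widehat\alpha$) are standard. The step I expect to require the most care is the legitimacy of invoking \eqref{eq:WBSDE_def} at the controlled flow: one must verify that $\mu^\alpha$ truly belongs to $\Lc$ with drift $B^\alpha$ and that the process entering the W--BSDE expectations may be taken to be $X^{t,\mu_t,\alpha}$ itself. The point that makes this work is that every integrand in \eqref{eq:WBSDE_def} depends on the trajectory only through its current marginal, so those expectations depend solely on the flow $\mu^\alpha$; this observation, combined with the uniqueness argument identifying $\mu^{\widehat\alpha}$ with $\nu$, is the technical heart of the proof, the remainder being a direct comparison of the generator $f$ with the Hamiltonian $h$ via \Cref{def:W-BSDE}.
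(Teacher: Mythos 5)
Your overall route is exactly the paper's verification argument: use $f=\sup_a h\ge h$ inside \eqref{eq:WBSDE_def} to get $Y(t,\mu_t)\ge J(t,\mu_t,\alpha)$ for every $\alpha\in\Ac$, obtain equality for $\widehat\alpha$ (where, as you correctly make explicit --- more so than the paper --- uniqueness in law of the controlled McKean--Vlasov SDE with the feedback frozen at $\nu$ identifies $\mu^{\widehat\alpha}=\nu$, so that $\widehat\alpha$ realizes the $\arg\max$ defining $f$), and then read off the necessary condition for an arbitrary optimal $\widetilde\alpha$ from the equality case together with uniqueness of the maximizer $\widehat a$. Those three moves, and the $f$--versus--$h$ comparison that powers them, coincide with the paper's proof.

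There is, however, one concrete gap at precisely the step you flag as delicate. You assert that ``$\mu^\alpha\in\Lc$'' and that the W--BSDE relation applies ``in its time--shifted form starting at $t$.'' Neither is available as stated: the flow $\mu^\alpha$ is frozen at $\mu_t$ on $[0,t]$ (the paper sets $\Lc(X^{t,\nu,\alpha}_r)=\nu$ for $r\le t$), and a constant-in-time flow cannot be the marginal flow of a process $\mathrm{d}X_s=B(s,X_s)\,\mathrm{d}s+\mathrm{d}W_s$, so $\mu^\alpha\notin\Lc$; moreover, the proposition's hypothesis is only that $(Y,Z)$ solves the W--BSDE in the sense of \Cref{def:W-BSDE}, i.e.\ over the class $\Lc=\Lc^0$, and the relation over $\Lc^{t}$ (introduced later, in \eqref{eq:WBSDE_def_varrying_time}) is not a consequence of it, since flows in $\Lc^{t}$ are degenerate on $[0,t]$. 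The paper closes this by a gluing device: with $\mu\in\Lc$ driven by $B$, set $\overline b(s,x,m,a):=B(s,x)\1_{s\le t}+b(s,x,m,a)\1_{s>t}$ and let $U$ solve the associated McKean--Vlasov SDE from $\Lc(U_0)=\mu_0$; then $\eta_s:=\Lc(U_s)$ defines a flow in $\Lc$ with $\eta_t=\mu_t$ and $\eta_s=\mu^\alpha_s$ for $s\in[t,T]$, and since \eqref{eq:WBSDE_def} evaluated at time $t$ involves the flow only through $(\eta_s)_{s\ge t}$ and $\eta_t$, it delivers exactly the identity you need. Your guiding intuition --- that the integrands depend on the trajectory only through its current marginal --- is the right one, but it must be implemented through this concatenation (or an equivalent construction); once that substitution is made, the rest of your argument goes through as written.
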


\begin{remark}
    {\rm (i) } The first part of {\rm \Cref{prop:McK-fromBSDE}} can be interpreted as a verification theorem in the spirit of classical stochastic control theory. It demonstrates how a solution to the mean--field control $($ MFC $)$ problem can be obtained via our notion of {\rm W--BSDE}. Moreover, it provides further insight into the function of the process $Z$, highlighting its direct involvement in the construction of the optimal control.

\medskip
    {\rm (ii)} The second part of {\rm \Cref{prop:McK-fromBSDE}} yields a necessary condition for optimality. However, this condition alone does not ensure uniqueness of the optimal control, due to the dependence of \( \widehat{a} \) on the distribution \( \mu^{\tilde \alpha} \) of the controlled process. Since distinct optimal trajectories may generate different marginal laws, uniqueness may fail. Nevertheless, if {\rm \eqref{eq:resolvant_McK}} admits a unique distributional solution, then uniqueness holds, as \( (\mu^{\tilde \alpha}_s)_{s \in [0,T]} \) and \( (\nu_s)_{s \in [0,T]} \) solve the same equation.


\end{remark}

Now, we will show that we can construct a W--BSDE solution through an optimal control of $V$. We denote by $\widehat{\alpha}:[0,T] \x \R \x \Pc(\R) \to A$ the optimal control for the problem $V$. For any $t \in [0,T]$, $\nu \in \Pc(\R)$, we assume that there exists $(R^{t,\nu}_s)_{s \in [0,T]}$ satisfying
$\Lc(R^{t,\nu}_r)=\nu$ for each $r \in [0,t]$, and $t \le s \le T$,
\begin{align*}
     \mathrm{d}R^{t,\nu}_s= b \left(s, R^{t,\nu}_s, \mu^{t,\nu}_s, \widehat{\alpha}(s,R^{t,\nu}_s,\mu^{t,\nu}_s) \right) \mathrm{d}s + \mathrm{d}W_s\;\mbox{with}\;\mu^{t,\nu}_s=\Lc(R^{t,\nu}_s).
\end{align*}
We set
\begin{align*}
    \widehat{Y}(t,\nu):=\E \left[ \int_t^T L \left(s, R^{t,\nu}_s, \mu^{t,\nu}_s, \widehat{\alpha}(s,R^{t,\nu}_s,\mu_s^{t,\nu}) \right) \mathrm{d}s + \psi\left(\mu^{t,\nu}_T \right) \right].
\end{align*}

We define the feedback--controlled drift and cost functions by
\[
\widehat{b}(t,x,m) := b\left(t,x,m, \widehat{\alpha}(t,x,m)\right), \quad \text{and} \quad \widehat{L}(t,m) := \int_{\mathbb{R}} L\left(t,x,m, \widehat{\alpha}(t,x,m)\right) m(\mathrm{d}x).
\]
We assume that the map $(t,x,m) \mapsto \big(\widehat{b}(t,x,m), \widehat{L}(t,m), \psi(m)\big)$ is continuous in $(x,m)$ for each fixed $t$, continuously differentiable in $x$ with bounded and continuous derivative, and admits a linear functional derivative with respect to the measure variable $m$, denoted by $\big(\delta_m \widehat{b}(t,x,m)(y), \delta_m \widehat{L}(t,m)(y), \delta_m \psi(m)(y)\big)$. Furthermore, these derivatives are bounded, continuous, continuously differentiable in $y$, and jointly continuous in $(x,m,y)$ for each $t$, with $\partial_y \delta_m \widehat{b}$, $\partial_y \delta_m \widehat{L}$, and $\partial_y \delta_m \psi$ also bounded.


\begin{proposition} \label{prop:McK-fromOC}
    We have $\widehat{Y}=V$ and there exists a map $\widehat{Z}:[0,T] \x \Pc(\R) \x \R \to \R$ s.t. the couple $\big(\widehat{Y}, \widehat{Z} \big)$ is a {\rm W--BSDE} solution associated to $(f,\psi)$ in the sense of {\rm \Cref{def:W-BSDE}}.
\end{proposition}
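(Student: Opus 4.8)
The plan is to take $\widehat{Z}$ to be the spatial gradient of the linear functional derivative of the value function, namely $\widehat{Z}(t,m,x):=\partial_x \delta_m \widehat{Y}(t,m)(x)$, and then to verify \eqref{eq:WBSDE_def} by feeding the master (Bellman) equation satisfied by $V$ into an It\^o/chain--rule formula on the Wasserstein space taken along an arbitrary test flow $\mu\in\Lc$. The identity $\widehat{Y}=V$ is the easy part: by construction $\widehat{Y}(t,\nu)=J(t,\nu,\widehat{\alpha})$ is the payoff produced by inserting the optimal feedback $\widehat{\alpha}$, so optimality gives $\widehat{Y}=V$ directly. The crucial structural observation is that, once $\widehat{\alpha}$ is frozen, $R^{t,\nu}$ solves a McKean--Vlasov SDE with the \emph{smooth} drift $\widehat{b}(s,x,m)=b(s,x,m,\widehat{\alpha}(s,x,m))$ and additive non--degenerate noise, and $\widehat{Y}(t,\nu)=\int_t^T \widehat{L}(s,\mu^{t,\nu}_s)\,\mathrm ds+\psi(\mu^{t,\nu}_T)$ is a deterministic functional of the flow $s\mapsto\mu^{t,\nu}_s$; in particular $\widehat{Y}(T,\cdot)=\psi$.

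Next I would establish the regularity needed to define $\widehat{Z}$. Because $\widehat{b},\widehat{L},\psi$ and their linear functional derivatives satisfy the stated boundedness, spatial $C^1$ and joint--continuity hypotheses, the flow map $\nu\mapsto(\mu^{t,\nu}_s)_s$ is differentiable in the linear functional derivative sense (via the linearized/first--variation system associated with the McKean--Vlasov flow), and composing with the smooth functionals $\widehat{L},\psi$ yields $\delta_m\widehat{Y}$. The non--degeneracy of the noise supplies parabolic smoothing in the spatial variable, so that $\partial_x\delta_m\widehat{Y}$ and $\partial^2_x\delta_m\widehat{Y}$ exist and are bounded and continuous. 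Setting $\widehat{Z}(t,m,x):=\partial_x\delta_m\widehat{Y}(t,m)(x)$, boundedness of $\widehat{Z}$ together with the bounded drift $B$ yields the integrability required in \Cref{def:W-BSDE}, while smoothness of $\widehat{Y}$ and Wasserstein--continuity of $t\mapsto\mu_t$ give continuity of $t\mapsto\widehat{Y}(t,\mu_t)$.

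With this in hand, the dynamic programming principle for $V$ (the supremum over controls) combined with an It\^o formula on the Wasserstein space applied to $r\mapsto V(r,\mu^\alpha_r)$ and optimization over the instantaneous control shows that $\widehat{Y}=V$ solves the master Bellman equation
\[
\partial_t\widehat{Y}(t,m)+\int_\R f\big(t,x,m,\widehat{Z}(t,m,x)\big)\,m(\mathrm dx)+\tfrac12\int_\R \partial^2_x\delta_m\widehat{Y}(t,m)(x)\,m(\mathrm dx)=0,
\]
with terminal datum $\widehat{Y}(T,\cdot)=\psi$, the supremum defining $f$ being attained precisely at $\widehat{\alpha}(t,x,m)=\widehat{a}(t,x,m,\widehat{Z}(t,m,x))$. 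To verify the W--BSDE, I fix $\mu\in\Lc$ with drift $B$ and apply the chain rule to $t\mapsto\widehat{Y}(t,\mu_t)$; substituting the Bellman equation for $\partial_t\widehat{Y}$, the two second--order contributions $\tfrac12\int\partial^2_x\delta_m\widehat{Y}\,\mu_t(\mathrm dx)$ cancel, leaving
\[
\tfrac{\mathrm d}{\mathrm dt}\widehat{Y}(t,\mu_t)=-\E\big[f(t,X_t,\mu_t,\widehat{Z}(t,X_t,\mu_t))\big]+\E\big[\widehat{Z}(t,X_t,\mu_t)B(t,X_t)\big].
\]
Integrating from $t$ to $T$ and using $\widehat{Y}(T,\mu_T)=\psi(\mu_T)$ reproduces \eqref{eq:WBSDE_def} exactly (note that $f$ carries no $y$--dependence here, so the $Y$--argument is irrelevant).

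The main obstacle is the regularity step: proving that $V=\widehat{Y}$ admits a linear functional derivative that is twice continuously differentiable in space with bounded derivatives, and that the master Bellman equation holds in a classical enough sense to run the chain rule. This is exactly where the smoothness hypotheses on $\widehat{b},\widehat{L},\psi$ and the non--degeneracy of the noise are indispensable, the technical heart being the differentiation of the McKean--Vlasov flow with respect to its initial law and the control of the resulting linearized system. By contrast, once smoothness and the Bellman equation are secured, the chain--rule computation with its second--order cancellation, together with the verification of the continuity and integrability requirements of \Cref{def:W-BSDE}, is routine.
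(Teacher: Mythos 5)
Your verification skeleton (set $\widehat{Z}=\partial_x\delta_m\widehat{Y}$, derive the master Bellman equation, run a chain rule along an arbitrary $\mu\in\Lc$ and watch the second--order terms cancel) is coherent, but it contains a genuine gap, and it is exactly the step you yourself flag as the ``main obstacle'': the claim that $\delta_m\widehat{Y}$ exists with $\partial_x\delta_m\widehat{Y}$ \emph{and} $\partial_x^2\delta_m\widehat{Y}$ bounded and continuous, and that the master equation holds classically. This does not follow from the hypotheses of the proposition. The standing assumptions give only one order of differentiability of the data: $\widehat{b},\widehat{L},\psi$ are $C^1$ in $x$, and their linear functional derivatives are $C^1$ in the extra variable $y$ with bounded derivatives; nothing is assumed at second order. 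Differentiating the McKean--Vlasov flow in its initial law (the first--variation system) and composing with $\widehat{L},\psi$ can indeed produce $\delta_m\widehat{Y}$ and, spending the one available derivative of the data, $\partial_x\delta_m\widehat{Y}$; but a bounded continuous $\partial_x^2\delta_m\widehat{Y}$ up to time $T$ would require second derivatives of $\delta_m\psi$, $\delta_m\widehat{L}$, etc. The parabolic smoothing you invoke from the non--degenerate additive noise gains at most one derivative at the price of a singular factor of order $(T-t)^{-1/2}$, so the ``bounded and continuous'' assertion fails at the stated level of generality, and with it both the classical chain rule and the classical master Bellman equation on which your entire verification rests. This is not a routine technicality: classical solvability of master equations is precisely the hard problem the paper is trying to sidestep, so deferring it leaves the proof of \Cref{prop:McK-fromOC} essentially unproved under the paper's assumptions.

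For contrast, the paper's proof is engineered to avoid ever differentiating $V$ twice. It applies the probabilistic It\^o--type representation of \Cref{prop:FK_measure_general} (established via $n$--particle approximation, Girsanov, the Clark--Ocone formula and propagation of chaos) directly to the \emph{data} $\psi$ and $\widehat{L}$ along the $\widehat{b}$--flow; this formula needs only the first--order assumptions, the second--order smoothing being absorbed into the transition density through the kernel $\partial_x p/p$, whose integrability is \Cref{remark:integrability_Z}. The process $\widehat{Z}$ is then \emph{defined} from the resulting kernels $Z^\psi$ and $Z^L$ (morally it is $\partial_x\delta_m\widehat{Y}$, but this identification is never needed), which immediately yields the representation of $\widehat{Y}(t,\mu_t)$ with integrand $L+\widehat{Z}\,\widehat{b}$. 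Finally, the identification of the generator, $L\big(s,x,\mu_s,\widehat{\alpha}(s,x,\mu_s)\big)+\widehat{Z}(s,x,\mu_s)\,\widehat{b}(s,x,\mu_s)=f\big(s,x,\mu_s,\widehat{Z}(s,x,\mu_s)\big)$ a.e., is obtained not from an HJB equation but from the dynamic programming principle: comparing costs on $[t,(t+\varepsilon)\wedge T]$, integrating in $t$, dividing by $\varepsilon$, and invoking the uniqueness of the maximizer $\widehat{a}$ to conclude $\widehat{\alpha}(t,x,\mu_t)=\widehat{a}\big(t,x,\mu_t,\widehat{Z}(t,x,\mu_t)\big)$ $\mu_t(\mathrm{d}x)\mathrm{d}t$--a.e. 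To salvage your route you would have to strengthen the standing assumptions to second order (bounded $\partial_x^2\delta_m$ of all data plus corresponding regularity of the linearized flow); within the paper's hypotheses, the chain--rule step must be replaced by a representation argument of the \Cref{prop:FK_measure_general} type and the HJB step by the DPP argument above.
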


\begin{remark}
    {\rm \Cref{prop:McK-fromOC}} demonstrates that, under suitable additional assumptions, any solution to a mean--field control {\rm (MFC)} problem gives rise to a corresponding {\rm W--BSDE} solution. Taken together with {\rm \Cref{prop:McK-fromBSDE}}, this establishes a two--way correspondence between {\rm MFC} problems and {\rm W--BSDEs}: solving an {\rm MFC} problem yields a solution to a {\rm W--BSDE}, and, conversely, any solution to a {\rm W--BSDE} provides an optimal control for the associated {\rm MFC} problem.
\end{remark}

\subsection{Application to PDEs over the set of probability measures} \label{subsec:PDE}
We now investigate the fundamental connection between our Wasserstein BSDE {\rm (W-BSDE)} framework and partial differential equations (PDEs) on Wasserstein space. In this section, we establish a two--way correspondence:
\begin{itemize}
    \item Every classical solution of the associated Wasserstein PDE admits a representation as a solution to our W--BSDE;

    \item Conversely, any solution of the W--BSDE system can be interpreted as a viscosity solution of the corresponding PDE on the space of probability measures.
\end{itemize}


\medskip
Let $\overline{B}:[0,T] \x \R \x \Pc(\R) \to \R$ and $\overline{L}:[0,T] \x \R \x \Pc(\R) \x \R^2 \to \R$ be two bounded Borel maps. We introduce the generator
\begin{align*}
    H (t,m,r,p,\gamma):=\int_\R p(t,x,m)\overline{B} \left(t, x, m \right) m(\mathrm{d}x) + \frac{1}{2} \int_\R \gamma(t,m,x) m(\mathrm{d}x) + \int_\R \overline{L} \left(t,x,m, r(t,m),p(t,x,m) \right) m(\mathrm{d}x),
\end{align*}
for any $(t,m,p,\gamma)$ s.t. the previous integrals are well--defined. A function $v : [0,T] \times \Pc(\mathbb{R}) \to \mathbb{R}$ is said to be smooth if its time derivative $\partial_t v$ exists, and its linear functional derivative with respect to the measure variable, $\delta_m v$, is well--defined and twice differentiable with respect to the spatial variable. Moreover, all these derivatives are continuous and bounded in all their arguments.
Let $u$ be a smooth map satisfying: $u(T,\cdot)=\psi(\cdot)$ and for $(t,\nu) \in [0,T) \x \Pc(\R)$,
\begin{align}\label{eq:W-PDE}
    \partial_tu(t,\nu) + H\left(t,\nu,u,\partial_x \delta_mu, \partial^2_x \delta_mu \right)=0.
\end{align}
\begin{remark}
    Readers familiar with {\rm HJB} equations on the Wasserstein space may be more accustomed to working within $\Pc_2(\R)$ rather than $\Pc(\R)$. This preference is largely due to the Hilbertian structure provided by $\Pc_2(\R)$, which facilitates certain analytical techniques. In contrast, our approach does not rely on this structure, as we focus on settings with bounded coefficients. Nonetheless, most of the tools and results we develop can be extended to the $\Pc_2(\R)$ framework, provided the appropriate integrability conditions are satisfied.
\end{remark}

We define $f(t,x,m,y,z):=\overline{L}(t,x,m,y,z) + z \overline{B}(t,x,m)$ and, $\widehat{Y}(t,\nu)=u(t,\nu)$ and $\widehat{Z}(t,x,\nu):=\partial_x \delta_m u(t,x,\nu)$. 

\begin{proposition} \label{prop:PDE:fromPDE}
    The couple $(\widehat{Y},\widehat{Z} )$ is {\rm W--BSDE} solution associated to $(f,\psi)$ in the sense of {\rm \Cref{def:W-BSDE}}.
\end{proposition}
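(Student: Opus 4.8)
The plan is to read off the Wasserstein BSDE \eqref{eq:WBSDE_def} from the terminal-value PDE \eqref{eq:W-PDE} by means of an It\^o (chain-rule) formula for the map $s \mapsto u(s,\mu_s)$ taken along the flow of marginals. Fix $\mu \in \Lc$ associated to a bounded Borel drift $B$, so that $\mu_s = \Lc(X_s)$ with $\mathrm{d}X_s = B(s,X_s)\,\mathrm{d}s + \mathrm{d}W_s$. The first and central step is to establish the chain rule
\[
u(T,\mu_T) - u(t,\mu_t) = \int_t^T \Big[\, \partial_s u(s,\mu_s) + \int_\R \big(\partial_x \delta_m u(s,\mu_s,x)\, B(s,x) + \tfrac12\,\partial^2_x \delta_m u(s,\mu_s,x)\big)\,\mu_s(\mathrm{d}x)\Big]\,\mathrm{d}s .
\]

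I expect this chain rule to be the main technical obstacle, so I would prove it directly from the definition \eqref{eq:def_derivative_measure} of the linear functional derivative. Taking a partition $t = s_0 < \cdots < s_N = T$, I split each increment as $u(s_{k+1},\mu_{s_{k+1}}) - u(s_k,\mu_{s_k}) = \big[u(s_{k+1},\mu_{s_{k+1}}) - u(s_k,\mu_{s_{k+1}})\big] + \big[u(s_k,\mu_{s_{k+1}}) - u(s_k,\mu_{s_k})\big]$. The first (``time'') bracket equals $\int_{s_k}^{s_{k+1}} \partial_s u(r,\mu_{s_{k+1}})\,\mathrm{d}r$, which sums to $\int_t^T \partial_s u(s,\mu_s)\,\mathrm{d}s$ in the limit by continuity of $\partial_t u$ and weak continuity of $s \mapsto \mu_s$ (itself coming from the a.s.\ continuity of $X$). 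For the second (``measure'') bracket, \eqref{eq:def_derivative_measure} gives $\int_0^1\!\int_\R \delta_m u\big(s_k, \lambda\mu_{s_{k+1}} + (1-\lambda)\mu_{s_k}, x\big)\,(\mu_{s_{k+1}}-\mu_{s_k})(\mathrm{d}x)\,\mathrm{d}\lambda$; applying the classical It\^o formula to the $C^2$-in-$x$ integrand $\phi := \delta_m u(s_k,\,\cdot\,,\,\cdot\,)$ composed with $X$ turns $\int_\R \phi\,\mathrm{d}(\mu_{s_{k+1}}-\mu_{s_k}) = \E[\phi(X_{s_{k+1}})-\phi(X_{s_k})]$ into $\E\big[\int_{s_k}^{s_{k+1}}(\phi' B + \tfrac12\phi'')\,\mathrm{d}r\big]$. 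Boundedness and joint continuity of $\partial_x\delta_m u$ and $\partial^2_x\delta_m u$ (guaranteed by smoothness of $u$) then let me pass to the limit by dominated convergence, producing the measure integral above.

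Once the chain rule holds, the conclusion is pure algebra. Substituting the PDE $\partial_s u(s,\mu_s) = -H(s,\mu_s,u,\partial_x\delta_m u,\partial^2_x\delta_m u)$ and expanding $H$ by its definition, the terms $\int_\R \partial_x\delta_m u\,\overline B\,\mu_s(\mathrm{d}x)$ cancel partially and the two $\tfrac12\int_\R \partial^2_x\delta_m u\,\mu_s(\mathrm{d}x)$ terms cancel exactly, leaving
\[
\frac{\mathrm{d}}{\mathrm{d}s}u(s,\mu_s) = -\int_\R \overline L\big(s,x,\mu_s,u(s,\mu_s),\partial_x\delta_m u(s,\mu_s,x)\big)\,\mu_s(\mathrm{d}x) - \int_\R \partial_x\delta_m u\,\overline B\,\mu_s(\mathrm{d}x) + \int_\R \partial_x\delta_m u\, B\,\mu_s(\mathrm{d}x).
\]
Since $f = \overline L + z\,\overline B$ with $z = \partial_x\delta_m u = \widehat Z$ and $\Lc(X_s)=\mu_s$, the first two integrals combine into $\E\big[f(s,X_s,\mu_s,\widehat Y(s,\mu_s),\widehat Z(s,X_s,\mu_s))\big]$ and the last is $\E\big[\widehat Z(s,X_s,\mu_s)B(s,X_s)\big]$. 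Integrating over $[t,T]$ and using $u(T,\cdot)=\psi$ gives precisely \eqref{eq:WBSDE_def} with $\widehat Y = u$.

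Finally I would check the two standing requirements of \Cref{def:W-BSDE}. The map $t \mapsto \widehat Y(t,\mu_t)=u(t,\mu_t)$ is the integral of a bounded integrand by the chain rule, hence Lipschitz and in particular continuous in $t$. Moreover, since $\overline L$, $\overline B$ are bounded and $\partial_x\delta_m u$ is bounded, both $f(r,X_r,\mu_r,\widehat Y,\widehat Z)$ and $\widehat Z(r,X_r,\mu_r)$ are bounded, so $\E\big[\int_0^T (|f| + |\widehat Z|)\,\mathrm{d}r\big] < \infty$. This completes the verification that $(\widehat Y,\widehat Z)$ is a W--BSDE solution associated to $(f,\psi)$.
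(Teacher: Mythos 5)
Your proposal is correct and follows essentially the same route as the paper: the paper's proof is exactly the application of It\^o's formula along the measure flow $t \mapsto u(t,\mu_t)$, followed by the substitution $\partial_t u = -H$ and the cancellation/recombination into $f = \overline{L} + z\,\overline{B}$ that you describe, with continuity and integrability read off from boundedness. The only difference is that you prove the chain rule from scratch via the partition and linear-functional-derivative argument (the standard proof of the measure-flow It\^o formula), whereas the paper simply cites it --- adding a footnote to extend it from $\Pc_2(\R)$ to $\Pc(\R)$ by approximation, a step your self-contained argument incidentally avoids since it only ever applies the classical It\^o formula to bounded $C^2$ functions.
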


\begin{proof}
    It is just an application of It\^o Lemma\footnote{The It\^o formula along a flow of probability measures \( (\mu_t)_{t \in [0,T]} \) is usually formulated in \( \mathcal{P}_2(\mathbb{R}) \). However, since the functions involved in our framework are bounded, we can approximate any initial condition \( \mu_0 \in \mathcal{P}(\mathbb{R}) \) by a sequence in \( \mathcal{P}_2(\mathbb{R}) \), and thus extend the formula to \( \mathcal{P}(\mathbb{R}) \).
  }. Indeed, let $\mu \in \Lc$, since $u$ is solution of the PDE, we have $u(T,\mu_T)=\psi(\mu_T)$ and
    \begin{align*}
        &\mathrm{d}u(t,\mu_t)
        = \partial_t u(t,\mu_t) \mathrm{d}t + \E \left[ \partial_x \delta_m u(t,\mu_t)(X_t) B(t,X_t) \right] \mathrm{d}t + \frac{1}{2} \E \left[ \partial^2_x \delta_m u(t,\mu_t)(X_t) \right] \mathrm{d}t
        \\
        &= \partial_t u(t,\mu_t) \mathrm{d}t + H(t,\mu_t,u,\partial_x \delta_m u,\partial^2_x \delta_m u) \mathrm{d}t + \E \left[ \partial_x \delta_m u(t,\mu_t)(X_t) \left( B(t,X_t)- \overline{B}(t,X_t,\mu_t) \right) \right] \mathrm{d}t 
        \\
        &~~~~~~~~~~~~~~~~~~- \E \left[ \overline{L}\left(t,X_t,\mu_t,u(t,\mu_t), \partial_x \delta_m u(t,\mu_t)(X_t) \right) \right] \mathrm{d}t
        \\
        &=-\E \left[ f \left( t,X_t,\mu_t,u(t,\mu_t), \partial_x \delta_m u(t,\mu_t)(X_t) \right) \right] \mathrm{d}t +  \E \left[ \partial_x \delta_m u(t,\mu_t)(X_t) B(t,X_t) \right] \mathrm{d}t.
    \end{align*}
    As $\partial_x \delta_m u$ is considered bounded, we can then see that $(u,\partial_x \delta_m u)$  is a W--BSDE solution associated to $(f,\psi)$.
\end{proof}

Now, we connect a W--BSDE solution to the PDE. Let us mention the definition of viscosity solution associated to \eqref{eq:W-PDE}. We say that a continuous map $v:[0,T] \x \Pc(\R) \to \R$ is a viscosity sub--solution ( resp super--solution ) if: $v(T,\cdot) \le ({\rm resp} \ge ) \;\psi(\cdot)$ and, for any $(t,\nu)$ and smooth map $\varphi:[0,T] \x \Pc(\R) \to \R$ s.t. $(v-\varphi)$ has a maximum (resp minimum) at $(t,\nu)$ then
\begin{align*}
    \partial_tv(t,\nu) + H\left(t,\nu,v,\partial_x \delta_mv, \partial^2_x \delta_mv\right) \ge ({\rm resp} \le \;)\;0.
\end{align*}
We say that $v$ is a viscosity solution if it is both a viscosity sub--solution and a viscosity super--solution. Let $t_0 \in [0,T]$, we denote $(Y^{t_0},Z^{t_0})$ a W--BSDE solution associated to $(f,\psi)$ with initial time $t_0$ (see \Cref{eq:WBSDE_def_varrying_time} for the defintion). We set 
$$
    \widehat{u}(t_0,\nu_0):=Y^{t_0}(t_0,\nu_0).
$$
We assume that $\widehat{u}$ and $f$ are continuous, the map $(t,x,m,y,z) \mapsto f(t,x,m,y,z)$ is Lipscitz in $(y,z)$ uniformly in $(t,x,m)$ and, for each $t_0$, $Y^{t_0}$ and $Z^{t_0}$ are continuous in the measure argument $m$ for each $(t,x)$.
\begin{proposition} \label{prop:PDE:fromBSDE}
    The map $\widehat{u}:[0,T] \x \Pc(\R) \to \R$ is a viscosity solution of the {\rm PDE} \eqref{eq:W-PDE}. 
\end{proposition}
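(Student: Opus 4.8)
The plan is to establish the Wasserstein analogue of the nonlinear Feynman--Kac correspondence, following the classical ``BSDE implies viscosity solution'' scheme of Pardoux--Peng, but with the stochastic comparison theorem replaced by the comparison principle for W--BSDEs of \Cref{subsec:comparison}. Two ingredients are needed: a flow (dynamic programming) property identifying $\widehat u$ along measure flows, and a test--function argument producing the two viscosity inequalities. The terminal condition is immediate: taking $t_0 = T$ in \eqref{eq:WBSDE_def_varrying_time}, all time integrals vanish, so $\widehat u(T,\cdot) = Y^T(T,\cdot) = \psi(\cdot)$, which gives both $\widehat u(T,\cdot)\le\psi$ and $\widehat u(T,\cdot)\ge\psi$.

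First I would prove the flow property: for every $\mu \in \Lc$ and $t_0 \le s \le T$, one has $\widehat u(s,\mu_s) = Y^{t_0}(s,\mu_s)$, and hence
\[
\widehat u(t_0,\nu_0) = \widehat u(s,\mu_s) + \E\Big[\int_{t_0}^s f\big(r,X_r,\mu_r,\widehat u(r,\mu_r),Z^{t_0}(r,X_r,\mu_r)\big)\mathrm{d}r\Big] - \E\Big[\int_{t_0}^s Z^{t_0}(r,X_r,\mu_r)B(r,X_r)\,\mathrm{d}r\Big].
\]
The second relation follows by subtracting \eqref{eq:WBSDE_def_varrying_time} written at $s$ from the one written at $t_0$; the identification $Y^{t_0}(s,\cdot)=\widehat u(s,\cdot)$ follows from \emph{uniqueness}, since both $Y^{t_0}$ restricted to $[s,T]$ and $Y^{s}$ solve the W--BSDE associated to $(f,\psi)$ on $[s,T]$, hence coincide by the comparison principle of \Cref{subsec:comparison}. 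Consequently, $\widehat u$ restricted to any $[t_0,t_0+h]$ is itself a W--BSDE solution with generator $f$ and terminal datum $\widehat u(t_0+h,\cdot)$.

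Next I would prove the subsolution property (the supersolution property being symmetric). Let $\varphi$ be smooth with $\widehat u - \varphi$ attaining a maximum at $(t_0,\nu_0)$, normalized so that $\widehat u(t_0,\nu_0) = \varphi(t_0,\nu_0)$ and $\widehat u \le \varphi$ locally. Applying the It\^o formula along flows exactly as in the proof of \Cref{prop:PDE:fromPDE}, one checks that $\varphi$ solves, along any flow $\mu$, a W--BSDE with pair $(\varphi,\partial_x\delta_m\varphi)$ whose generator is $f$ shifted by minus the PDE residual $G[\varphi](t,\nu) := \partial_t\varphi(t,\nu) + H(t,\nu,\varphi,\partial_x\delta_m\varphi,\partial^2_x\delta_m\varphi)$; that is, the defect of $\varphi$ from solving the W--BSDE is precisely $G[\varphi]$. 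Assume, for contradiction, that $G[\varphi](t_0,\nu_0) < 0$. By continuity of the coefficients and smoothness of $\varphi$, there are $\varepsilon,h>0$ with $G[\varphi] \le -\varepsilon$ on a Wasserstein--ball around $\nu_0$; since the \emph{deterministic} measure flow satisfies $\Wc_1(\mu_r,\nu_0)\le \|B\|_\infty h + c\sqrt h\to 0$, for $h$ small it remains in this ball on $[t_0,t_0+h]$. There, $\varphi$ is a W--BSDE solution with generator $\ge f + \varepsilon$ and terminal $\varphi(t_0+h,\cdot) \ge \widehat u(t_0+h,\cdot)$, while $\widehat u$ has generator $f$ and the smaller terminal. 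A localized application of the comparison principle then forces $\varphi(t_0,\nu_0) > \widehat u(t_0,\nu_0)$, contradicting the equality at the touching point; hence $G[\varphi](t_0,\nu_0)\ge 0$.

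The main obstacle is the $Z$--variable. A naive Gronwall estimate on $\widehat u(\cdot,\mu_\cdot)-\varphi(\cdot,\mu_\cdot)$ runs into the uncontrolled difference $Z^{t_0}-\partial_x\delta_m\varphi$, which cannot be bounded and in particular rules out any direct differentiation of $t\mapsto\widehat u(t,\mu_t)-\varphi(t,\mu_t)$ at $t_0$; this is exactly why the comparison principle---whose proof absorbs the $Z$--difference through a linearization and change of measure rather than estimating it---is indispensable. The accompanying technical points are the rigorous identification in the flow property (uniqueness of W--BSDE solutions over the correct class of reachable measures) and the localization of the comparison principle to a sub--interval, with arbitrary terminal datum and with generator ordering valid only on a neighborhood of $\nu_0$; here the deterministic nature of the measure flow is what makes the localization clean. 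Finally, the extension of the It\^o formula along a flow of measures from $\Pc_2(\R)$ to $\Pc(\R)$, needed for $\varphi$, is handled by the boundedness--based approximation already invoked in \Cref{prop:PDE:fromPDE}.
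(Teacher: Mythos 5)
Your proposal follows essentially the same route as the paper's proof: the identification $\widehat u(t_0+\varepsilon,\cdot)=Y^{t_0}(t_0+\varepsilon,\cdot)$ via \Cref{cor:uniqueness}, the contradiction hypothesis giving a $\delta$--gap on a small Wasserstein ball, the small--time containment estimate $\Wc_0(\nu_s,\nu_0)\le (s-t_0)|B|_\infty+\sqrt{s-t_0}$, the observation (via It\^o, as in \Cref{prop:PDE:fromPDE}) that $(\varphi,\partial_x\delta_m\varphi)$ is itself a W--BSDE solution with generator $h(t,x,m):=-\partial_t\varphi(t,m)-\frac12\partial_x^2\delta_m\varphi(t,m)(x)$, and the localized comparison principle of \Cref{prop:comparision} propagating the strict gap to the touching point. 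Your closing remarks on why the comparison principle (rather than a Gronwall estimate on $Y^{t_0}-\varphi$) is needed to absorb the $Z$--difference, and on the role of varying initial times, accurately reflect the paper's own treatment, so the proposal is correct and essentially identical in approach.
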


\begin{remark}
    In parallel with the mean--field control {\rm (MFC)} framework, {\rm \Cref{prop:PDE:fromPDE}} and {\rm \Cref{prop:PDE:fromBSDE}} highlight the correspondence between solutions of partial differential equations {\rm(PDEs)} over the Wasserstein space and solutions of {\rm W--BSDEs}. These results suggest that, in a meaningful sense, analyzing such {\rm PDEs} is equivalent to studying {\rm W--BSDEs}, thereby offering a dual perspective on the underlying problems.

\end{remark}

We now turn our attention to another desirable property for any notion of BSDE: namely, a comparison principle, which in turn implies a uniqueness result.


\subsection{Comparison and uniqueness results} \label{subsec:comparison}

We establish in this section a comparison principle for our notion of {\rm W--BSDE}. To this end, we consider {\rm W--BSDEs} with varying initial times. Let $t_0 \in [0,T]$. We say that a family of probability measures $(\mu_t)_{t \in [0,T]}$ belongs to the class $\Lc^{t_0}$ if there exists a bounded Borel function $B: [0,T] \times \R \to \R$ such that, for $s \in [t_0,T]$, $\mu_s = \Lc(R_s)$ where $(R_s)_{s \in [t_0,T]}$ solves
\[
\mathrm{d}R_s = B(s, R_s)\,\mathrm{d}s + \mathrm{d}W_s,
\]
and for $s \in [0,t_0]$, we set $\mu_s = \mu_{t_0}$. In other words, $\Lc^{t_0}$ consists of families of measures arising from non--degenerate diffusion processes starting from time $t_0$. In particular, we have $\Lc^0 = \Lc$. The couple $(Y^{t_0}, Z^{t_0})$ is a W--BSDE associated to $(f,\psi)$ starting at $t_0$ if: for any $\mu \in \Lc^{t_0}$ associated to $B$, the map $t \mapsto Y^{t_0}(t,\mu_t)$ is continuous, the integrability condition is satisfied i.e. $\E \left[ \int_{t_0}^T \left|f \left( r, X_r, \mu_r, Y^{t_0}(r,\mu_r), Z^{t_0} \left(r, X_r, \mu_r \right) \right) \right| + \left|Z^{t_0}\left(r, X_r,\mu_r \right) \right|\; \mathrm{d}r \right] < \infty,$
    and for any $t \in [t_0,T]$,
    \begin{align} \label{eq:WBSDE_def_varrying_time}
        Y^{t_0}(t,\mu_t)
        =
        \psi(\mu_T) + \E \left[ \int_t^T f \left( r, X_r, \mu_r, Y^{t_0}(r,\mu_r), Z^{t_0} \left(r,X_r, \mu_r \right) \right) \mathrm{d}r \right] - \E\left[ \int_t^T Z^{t_0}\left(r,X_r,\mu_r \right) B(r,X_r)\mathrm{d}r \right].
    \end{align}

\begin{remark}
    Considering different initial times proves useful for the following reason. Let $0 \le \underline{t_0} < \overline{t_0} \le T$. A measure flow $\mu = (\mu_t)_{t \in [0,T]} \in \Lc^{\overline{t_0}}$, i.e., arising from a non--degenerate diffusion on $[\overline{t_0},T]$, is not necessarily non--degenerate on the interval $[\underline{t_0},T]$. As a consequence, the relation in {\rm\Cref{eq:WBSDE_def_varrying_time}} may fail to hold for a solution pair $(Y^{\underline{t_0}}, Z^{\underline{t_0}})$ when $\mu \in \Lc^{\overline{t_0}}$. Thus, the ability to compare two {\rm W--BSDEs} with different initial times is essential, particularly for establishing the viscosity property in {\rm \Cref{prop:PDE:fromBSDE}}.
\end{remark}    


\medskip
Let $0 \le t_1 \le t_2 \le T$, $\left( f^i: [0,T] \x \R \x \Pc(\R) \x \R^2 \to \R \right)_{i=1,2}$ and $\left( \psi^i: \Pc(\R) \to \R \right)_{i=1,2}$ two bounded sequences of maps. We assume that: for each $i=1,2$, the map $(t,x,m,y,z) \mapsto f^i(t,x,m,y,z)$ is Lipschitz in $(y,z)$ uniformly in $(t,x,m)$, and continuous in $(m,y,z)$ for each $(t,x)$. We denote $\left( Y^{t_i}, Z^{t_i} \right)$ W--BSDE associated to $(f^i,\psi^i)$ for each $i=1,2$.
\begin{proposition} \label{prop:comparision}
    If $(t,x,m) \mapsto \left(Z^{t_i}(t,x,m), Y^{t_i}(t,m) \right)$ is continuous in $m$ for each $(t,x)$, for any $i=1,2$, 
\begin{align*}
    \int_\R f^1\left(t,x,\mu_t,Y^{t_2}(t,\mu_t), Z^{t_2}(t,x,\mu_t) \right)\mu_t(\mathrm{d}x) \le \int_\R f^2 \left(t,x,\mu_t,Y^{t_2}(t,\mu_t), Z^{t_2}(t,x,\mu_t) \right) \mu_t(\mathrm{d}x),\; \mathrm{d}t\mbox{--a.e. }
\end{align*}    
    $\psi^1 (\mu_T) \le \psi^2(\mu_T)$, for any $\mu \in \Lc^{t_2}$, then $Y^{t_1}(t,\mu_t) \le Y^{t_2}(t,\mu_t)$ for any $t \in [t_2,T]$ and $\mu \in \Lc^{t_1}$.
\end{proposition}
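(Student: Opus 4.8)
The plan is to exploit the fact that the asserted inequality $Y^{t_1}(t,\mu_t)\le Y^{t_2}(t,\mu_t)$ depends on the flow $\mu$ only through its marginal $m:=\mu_t$ at the single time $t\in[t_2,T]$. I would therefore fix $t$ and $\mu\in\Lc^{t_1}$, set $m:=\mu_t$, and prove the pointwise bound $Y^{t_1}(t,m)\le Y^{t_2}(t,m)$, while keeping the freedom to replace $\mu$ on $[t,T]$ by \emph{any} other flow that still lies in $\Lc^{t_1}$ and still equals $m$ at time $t$. The first step is to rewrite \eqref{eq:WBSDE_def_varrying_time} as a purely deterministic integral equation: since $\Lc(X_r)=\mu_r$, one has $\E[f(r,X_r,\dots)]=\int_\R f(r,x,\mu_r,\dots)\,\mu_r(\mathrm{d}x)$ and similarly for the drift term, so both equations become identities between deterministic functions of $(\mu_r)_r$ and of the drift $B$.

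Next I would subtract the two equations along a common flow and linearise around the solution of the second one. With $\delta Y(s):=Y^{t_1}(s,\mu_s)-Y^{t_2}(s,\mu_s)$ and $\delta Z(s,x):=Z^{t_1}(s,x,\mu_s)-Z^{t_2}(s,x,\mu_s)$, the Lipschitz assumption on $f^1$ in $(y,z)$ produces bounded Borel coefficients $\lambda_r(x),\eta_r(x)$ (both dominated by the Lipschitz constant) with
\[
f^1(r,x,\mu_r,Y^{t_1},Z^{t_1})-f^1(r,x,\mu_r,Y^{t_2},Z^{t_2})=\lambda_r(x)\,\delta Y(r)+\eta_r(x)\,\delta Z(r,x),
\]
where $\eta_r(x)$ is the difference quotient of $f^1$ in its $z$--argument. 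The remainder $g_r(x):=f^1(r,x,\mu_r,Y^{t_2},Z^{t_2})-f^2(r,x,\mu_r,Y^{t_2},Z^{t_2})$ satisfies $\int_\R g_r\,\mathrm{d}\mu_r\le 0$ by the generator hypothesis (applied to the element of $\Lc^{t_2}$ whose marginals coincide with $(\mu_r)_{r\in[t_2,T]}$), and $\psi^1(\mu_T)-\psi^2(\mu_T)\le 0$. Collecting terms, $\delta Y$ solves
\[
\delta Y(s)=\big(\psi^1(\mu_T)-\psi^2(\mu_T)\big)+\int_s^T\Big(\bar\lambda_r\,\delta Y(r)+\bar g_r+\textstyle\int_\R(\eta_r(x)-B(r,x))\,\delta Z(r,x)\,\mu_r(\mathrm{d}x)\Big)\mathrm{d}r,
\]
with $\bar\lambda_r:=\int\lambda_r\,\mathrm{d}\mu_r$ bounded and $\bar g_r:=\int g_r\,\mathrm{d}\mu_r\le 0$.

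The only non--favourable term is the one carrying $\delta Z$, and here I would use the remaining freedom in the flow, choosing it so that its drift coincides with the $z$--linearisation slope $\eta$. Let $\Phi(r,x,m)$ be the difference quotient of $f^1$ in $z$ between $Z^{t_2}(r,x,m)$ and $Z^{t_1}(r,x,m)$; it is bounded and, by the assumed continuity of $Y^{t_2},Z^{t_1},Z^{t_2}$ in $m$, continuous in the measure argument. I would keep the given flow on $[t_1,t]$ (so that the resulting flow stays in $\Lc^{t_1}$ and still equals $m$ at time $t$) and, on $[t,T]$, replace it by the solution of the McKean--Vlasov equation $\mathrm{d}X_r=\Phi(r,X_r,\bar\mu_r)\,\mathrm{d}r+\mathrm{d}W_r$, $\bar\mu_r=\Lc(X_r)$, started from $m$ at time $t$, whose existence follows from standard results for McKean--Vlasov SDEs with bounded coefficients (\cite{lacker2018OnStrong}, \cite[Theorem A.2]{djete2019general}). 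Along this concatenated flow $\bar\mu\in\Lc^{t_1}$ one has $B(r,x)=\Phi(r,x,\bar\mu_r)=\eta_r(x)$ on $[t,T]$, so the $\delta Z$--integral cancels identically and the relation above reduces, on $[t,T]$, to the linear equation $\delta Y(s)=(\psi^1-\psi^2)(\bar\mu_T)+\int_s^T(\bar\lambda_r\,\delta Y(r)+\bar g_r)\,\mathrm{d}r$ with non--positive terminal and source data. Since $s\mapsto\delta Y(s)$ is continuous, backward Gr\"onwall gives $\delta Y(s)=(\psi^1-\psi^2)(\bar\mu_T)\,e^{\int_s^T\bar\lambda_u\,\mathrm{d}u}+\int_s^T e^{\int_s^r\bar\lambda_u\,\mathrm{d}u}\,\bar g_r\,\mathrm{d}r\le 0$, and evaluating at $s=t$ yields $Y^{t_1}(t,m)\le Y^{t_2}(t,m)$.

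The main obstacle is exactly the $\delta Z$--term. Unlike the classical comparison theorem, there is no stochastic integral to be absorbed by a Girsanov shift on a fixed flow, because altering the measure also alters the law $\mu$ feeding the coefficients, hence the linearisation slope $\eta$ itself; the resolution is the fixed--point character of the McKean--Vlasov construction, which yields a flow whose drift is self--consistently equal to $\eta$. Verifying that this flow exists, remains in $\Lc^{t_1}$, and realises the prescribed marginal $m$ at time $t$ (whence the concatenation on $[t_1,t]$ and the essential use of the continuity of $Z^{t_i},Y^{t_2}$ in the measure argument) is the technical heart of the proof; the rest is linearisation and a one--dimensional Gr\"onwall estimate.
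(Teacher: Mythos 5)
Your proposal is correct and takes essentially the same route as the paper's proof: linearize $f^1$ in $(y,z)$ via bounded difference quotients, absorb the $\delta Z$--term by running the comparison along the McKean--Vlasov flow whose drift is self--consistently equal to the $z$--difference quotient (this is exactly the paper's map $F$ with $\beta = 0$, whose continuity in $m$ and the resulting well--posedness of the fixed--point SDE are invoked there too), and conclude through an exponential integrating factor (the paper's $D_s = \delta Y(s) e^{\ell_s}$) with nonpositive terminal and source terms. The only divergence is a refinement in your favor: you concatenate at time $t$, freezing the original flow on $[t_1,t]$ so the modified flow still has marginal $\mu_t$ at time $t$, whereas the paper freezes only on $[0,t_2]$ and so its displayed bound $D_t \le 0$ literally concerns the modified flow $\nu_t$ rather than $\mu_t$ for $t > t_2$ --- your concatenation--at--$t$ reading is precisely what is needed to land the stated conclusion $Y^{t_1}(t,\mu_t) \le Y^{t_2}(t,\mu_t)$.
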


An immediate consequence of the previous proposition is the following uniqueness result. However, the statements concerning \( Z^{t_1} \) and \( Z^{t_2} \) are more delicate. The proof can be found in \Cref{proof:corr_uniqueness}.
\begin{corollary} \label{cor:uniqueness}
    Under the assumptions of the previous proposition, if 
\begin{align*}
    \int_\R f^1\left(t,x,\mu_t,Y^{t_2}(t,\mu_t), Z^{t_2}(t,x,\mu_t) \right) \mu_t(\mathrm{d}x) = \int_\R f^2 \left(t,x,\mu_t,Y^{t_2}(t,\mu_t), Z^{t_2}(t,x,\mu_t) \right) \mu_t(\mathrm{d}x),\;\mathrm{d}t\mbox{--a.e.} and 
\end{align*} 
    and $\psi^1 (\mu_T) = \psi^2(\mu_T)$,
    for any $\mu \in \Lc^{t_2}$ then $Y^{t_1}(t,\mu_t)=Y^{t_2}(t,\mu_t)$ for any $t \in [t_2,T]$ and $\mu \in \Lc^{t_1}$. In addition, if 
\begin{align*}
    f^1\left(t,x,\mu_t,Y^{t_2}(t,\mu_t), Z^{t_2}(t,x,\mu_t) \right) =  f^2 \left(t,x,\mu_t,Y^{t_2}(t,\mu_t), Z^{t_2}(t,x,\mu_t) \right),\;\mu_t(\mathrm{d}x)\mathrm{d}t\mbox{--a.e.}\;\mbox{for any }\mu \in \Lc^{t_2},
\end{align*}
    then for any $\mu \in \Lc^{t_1}$ associated to $B$, we have
    \begin{align*}
        Z^{t_1}(t,x,\mu_t)B(t,x)=Z^{t_2}(t,x,\mu_t)B(t,x),\;\mathrm{d}t\otimes\mathrm{d}x\mbox{--a.e.}(t,x)\in [t_2,T]\x\R.
    \end{align*}
\end{corollary}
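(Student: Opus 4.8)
The plan is to handle the equality of the $Y$--components and the statement on the $Z$--components separately; the former is essentially a two--sided reading of \Cref{prop:comparision}, while the latter is the genuinely delicate point.

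For the equality $Y^{t_1}=Y^{t_2}$ on $[t_2,T]$, I would first apply \Cref{prop:comparision}: the two integrated hypotheses, being equalities, hold in particular as ``$\le$'', which gives $Y^{t_1}(t,\mu_t)\le Y^{t_2}(t,\mu_t)$ for every $t\in[t_2,T]$ and $\mu\in\Lc^{t_1}$. The reverse inequality does not follow by simply interchanging the two generators, since the comparison test in \Cref{prop:comparision} is performed at $(Y^{t_2},Z^{t_2})$ and the ordering $t_1\le t_2$ is fixed. I would therefore rerun the linearization underlying \Cref{prop:comparision}: for fixed $\mu\in\Lc^{t_1}$ with drift $B$, subtracting the two relations \eqref{eq:WBSDE_def_varrying_time} and setting $y(t):=Y^{t_1}(t,\mu_t)-Y^{t_2}(t,\mu_t)$ yields a linear integral equation for $y$ whose source terms are the terminal difference $\psi^1-\psi^2$ and the $\mu_r$--averaged generator difference $\int_\R\big(f^1-f^2\big)(r,x,\mu_r,Y^{t_2},Z^{t_2})\,\mu_r(\mathrm{d}x)$. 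Under the equality hypotheses both vanish, so the relation is homogeneous and Gr\"onwall's lemma forces $y\equiv0$. The only subtlety here is the treatment of the $Z$--pairing term, which is handled exactly as in the proof of \Cref{prop:comparision} (via the accompanying change of measure), recalling that $Z$ need not be square--integrable.

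For the statement on $Z$, fix $\mu\in\Lc^{t_1}$ associated to $B$ and write $Y:=Y^{t_1}=Y^{t_2}$ on $[t_2,T]$. Subtracting the two relations \eqref{eq:WBSDE_def_varrying_time}, the terminal terms cancel since $\psi^1=\psi^2$, and the pointwise hypothesis $f^1(s,x,\mu_s,Y,Z^{t_2})=f^2(s,x,\mu_s,Y,Z^{t_2})$, $\mu_s(\mathrm{d}x)\mathrm{d}s$--a.e., lets me replace $f^2(\cdots,Z^{t_2})$ by $f^1(\cdots,Z^{t_2})$. This leaves, for all $t\in[t_2,T]$,
\[
\E^{\P}\!\left[\int_t^T\big(f^1(s,X_s,\mu_s,Y,Z^{t_1}_s)-f^1(s,X_s,\mu_s,Y,Z^{t_2}_s)\big)\,\mathrm{d}s\right]=\E^{\P}\!\left[\int_t^T (Z^{t_1}_s-Z^{t_2}_s)\,B(s,X_s)\,\mathrm{d}s\right].
\]
Linearizing the left side in the $z$--variable (Lipschitz continuity produces a bounded multiplier $\eta_s=\eta_s(X_s)$) and differentiating in $t$ reduces this, for a.e.\ $s$, to the single scalar identity $\int_\R (Z^{t_1}-Z^{t_2})(s,x,\mu_s)\big(\eta_s(x)-B(s,x)\big)\,\mu_s(\mathrm{d}x)=0$.

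The main obstacle is upgrading this $\mu_s$--averaged identity to the pointwise conclusion $(Z^{t_1}-Z^{t_2})(t,x,\mu_t)B(t,x)=0$ a.e.: a single flow supplies only one scalar constraint per time, which is consistent with the fact that a W--BSDE sees $Z$ only through its $\mu_s$--pairing with the drift and through $f$. To localize I would vary the drift among flows sharing the marginal $\mu_s=:m_0$ at the chosen time; then $(Z^{t_1}-Z^{t_2})(s,\cdot,m_0)$ and $\eta_s$ are frozen, so subtracting the identities attached to two such flows eliminates the $\eta_s$--term and gives $\int_\R (Z^{t_1}-Z^{t_2})(s,x,m_0)\,\beta(x)\,m_0(\mathrm{d}x)=0$ for the realizable drift--differences $\beta$. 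Since the non--degenerate marginals $m_0$ admit strictly positive densities and the Fokker--Planck relation makes a sufficiently rich family of $\beta$ available, one concludes $(Z^{t_1}-Z^{t_2})(s,x,m_0)B(s,x)=0$ for $m_0$--a.e.\ $x$, that is $\mathrm{d}t\otimes\mathrm{d}x$--a.e. The delicate technical points are the construction of these comparison flows with bounded drift realizing enough perturbations $\beta$, and the control of the integrability of $Z$ under the associated changes of measure.
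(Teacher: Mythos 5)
Your treatment of the $Y$--equality is in substance the paper's argument: the paper does not apply \Cref{prop:comparision} twice with the indices swapped (as you rightly note, the test point $(Y^{t_2},Z^{t_2})$ makes the statement asymmetric), it simply reuses the linearized identity \eqref{eq:comparison_Z} with $\beta=0$, in which the terminal and generator differences vanish under the equality hypotheses, so that $D_t=0$ directly. One correction to your description: along the \emph{fixed} flow $\mu$ the subtracted relation is not homogeneous --- the term $\E\bigl[\int_t^T(Z^{t_1}-Z^{t_2})(q_s-B)\,\mathrm{d}s\bigr]$ survives, where $q$ denotes the $z$--difference quotient of $f^1$ --- and the device that removes it is not a change of measure but a change of \emph{flow}: one replaces $\mu$ by the McKean--Vlasov flow $\nu$ driven by the drift $F=q+\beta$, along which the pairing term collapses to $-\E\bigl[\int(Z^{t_1}-Z^{t_2})\beta\,\mathrm{d}s\bigr]$. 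You gesture at this (``handled exactly as in the proof''), so your plan survives, but then Gr\"onwall runs along $\nu$, and one recovers the statement along $\mu$ only at the evaluation time through the switching construction.

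For the $Z$--part your route genuinely differs from the paper's. The paper stays with the single auxiliary flow of drift $F=q+\beta$, asserts that $F(t,x,\nu_t)=\beta(t,x)$ a.e.\ (so that $\nu$ is the flow generated by the arbitrary bounded drift $\beta$; in particular $\nu=\mu$ for $\beta=B$), and reads off $\E\bigl[\int_t^T(Z^{t_1}-Z^{t_2})\beta\,\mathrm{d}s\bigr]=0$ from \eqref{eq:comparison_Z} before invoking non--degeneracy; the decoupling of the quotient term is thus built into the drift of one flow. You instead derive the averaged orthogonality $\int_\R(Z^{t_1}-Z^{t_2})(s,x,\mu_s)\bigl(\eta_s(x)-B(s,x)\bigr)\mu_s(\mathrm{d}x)=0$ along each flow and subtract the constraints of two flows through a common marginal; this is a legitimate alternative decoupling (your $\eta_s$ indeed depends only on $(s,x,m_0)$, hence is frozen), and, if arbitrary bounded drift differences are realizable, it even yields the stronger conclusion $(Z^{t_1}-Z^{t_2})(t,\cdot,\mu_t)=0$ $\mu_t$--a.e., not merely the $B$--weighted equality in the statement. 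The genuine gap in your sketch is not where you place it: constructing the comparison flows is trivial (switch the drift of $\mu$ at time $\tau$ to any bounded Borel drift; the flow stays non--degenerate and in $\Lc^{t_1}$), and no change of measure or extra integrability of $Z$ is involved. The difficulty is that your scalar identity holds only for a.e.\ $s$ along each flow, whereas two switched flows share the marginal $\mu_\tau$ only at the single instant $s=\tau$ --- a null set --- so ``subtracting the identities at the chosen time'' is not directly licensed. Closing this requires a Lebesgue--point argument comparing the averages $\frac{1}{\varepsilon}\int_\tau^{\tau+\varepsilon}$ of the two identities as $\varepsilon\to0$, using the assumed $m$--continuity of $Y^{t_i},Z^{t_i},f^i$ (continuity in $s$ is not assumed) together with the weak continuity of $s\mapsto\nu_s$. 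With that limit carried out your argument is complete, and arguably more transparent than the paper's, whose key identification $F(t,x,\nu_t)=\beta(t,x)$ is asserted very tersely from the hypothesis $f^1(\cdot,Y^{t_2},Z^{t_2})=f^2(\cdot,Y^{t_2},Z^{t_2})$, which does not by itself annihilate the $f^1$--quotient between $Z^{t_1}$ and $Z^{t_2}$.
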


\begin{remark}
    The comparison and uniqueness results presented here can be regarded as natural generalizations of the classical results for {\rm BSDEs}, adapted to the {\rm W--BSDE} framework. When \( Z^{t_1} \) and \( Z^{t_2} \) are independent of \( m \), the classical {\rm BSDE} equality is recovered. The presence of the measure argument, however, introduces additional complexity in establishing equality.



\end{remark}

\subsection{Some existence results} \label{subsec:existence}

This section is devoted to the well--posedness of {\rm W--BSDEs}. We begin with the case of vanishing generator, which parallels the classical martingale representation problem in standard {\rm BSDE} theory. We then address the case of a generator that is linear in the $z$ variable. While this case is typically handled via reduction to the zero--generator setting, additional technical challenges arise in our framework. We proceed by treating the case of a Lipschitz generator and conclude with the more delicate situation of a generator with quadratic growth in $z$.


\paragraph*{The case with no generator i.e. $f=0$} Let us consider $f=0$. This situation is equivalent to the martingale representation problem for the classical case. We assume that the map $\psi:\Pc(\R) \to \R$ is bounded and admits a bounded linear functional derivative $\delta_m \psi: \Pc(\R) \x \R \to \R$.

\medskip
We set 
$$
    Y(t,m):=\psi \left( \Lc\left( W_T-W_t + U \right) \right)
$$
and 
$$
    Z(t,x,m):={\E} \left[\delta_m \psi\left(  \Lc\left( W_T-W_t + U \right), W_T-W_t + x \right) \frac{W_T- W_t}{T-t} \right]
$$
where $U$ is a random variable independent of $W$ s.t. $\Lc\left( U\right)=m.$

\begin{proposition} \label{prop:Ex_no_generator}
    The couple $(Y,Z)$ is a {\rm W--BSDE} solution with generator $f=0$ and terminal map $\psi$. 
\end{proposition}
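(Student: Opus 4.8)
The plan is to reduce \Cref{prop:Ex_no_generator} to a fundamental--theorem--of--calculus identity along the flow. Fix $\mu\in\Lc$ associated with a bounded drift $B$, so that $\mu_t=\Lc(X_t)$ with $\mathrm{d}X_t=B(t,X_t)\,\mathrm{d}t+\mathrm{d}W_t$, and write $\gamma_s$ for the centered Gaussian law of variance $s$ and $p_s$ for its density. Since $W_T-W_t$ is independent of $\Hc_t$ and has law $\gamma_{T-t}$, the measure in the definition of $Y$ is exactly $\Lc(W_T-W_t+U)=\mu_t\ast\gamma_{T-t}$, whence $Y(t,\mu_t)=\psi\big(\Lc(M_t)\big)$ with
\[
M_t:=X_t+(W_T-W_t)=X_0+\int_0^t B(s,X_s)\,\mathrm{d}s+W_T .
\]
In particular $M_T=X_T$, so the terminal condition $Y(T,\mu_T)=\psi(\mu_T)$ holds, and with $f=0$ the defining \Cref{eq:WBSDE_def} is equivalent to establishing
\[
Y(t,\mu_t)=\psi(\mu_T)-\E\Big[\int_t^T Z(r,X_r,\mu_r)\,B(r,X_r)\,\mathrm{d}r\Big].
\]

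The key structural observation is that $Z$ is the spatial gradient of the linear functional derivative of $Y$. A direct computation from \eqref{eq:def_derivative_measure} gives $\delta_m Y(t,m,x)=\int_\R \delta_m\psi(m\ast\gamma_{T-t},x+z)\,p_{T-t}(z)\,\mathrm{d}z$, and a Gaussian integration by parts (a Bismut--Elworthy--Li identity) trades the $x$--derivative for the weight $z/(T-t)$, yielding $\partial_x\delta_m Y(t,m,x)=Z(t,x,m)$. This same smoothing is what makes $Z$ well defined although $\delta_m\psi$ is merely assumed bounded: the heat kernel supplies the missing derivative, at the price of the bound $|Z(t,x,m)|\le\|\delta_m\psi\|_\infty\,\E|W_T-W_t|/(T-t)\le C\,(T-t)^{-1/2}$. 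Conceptually, $Y(t,m)=\psi(m\ast\gamma_{T-t})$ is the solution of the Wasserstein heat equation $\partial_t Y+\tfrac12\int_\R\partial_x^2\delta_m Y(t,m,x)\,m(\mathrm{d}x)=0$ with terminal datum $\psi$, i.e.\ \eqref{eq:W-PDE} with $\overline{B}=\overline{L}=0$, so the assertion is morally \Cref{prop:PDE:fromPDE} applied to $u=Y$.

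To make this rigorous I would compute $\tfrac{\mathrm{d}}{\mathrm{d}t}Y(t,\mu_t)$ by hand. Writing $M_{t+h}=A+N$ and $M_t=(A-\Delta)+N$ with $A:=X_{t+h}$, $\Delta:=\int_t^{t+h}B(s,X_s)\,\mathrm{d}s$, and the common Gaussian $N:=W_T-W_{t+h}$ independent of $\Hc_{t+h}\ni(A,\Delta)$, I expand the increment $\psi(\Lc(M_{t+h}))-\psi(\Lc(M_t))$ through the linear derivative \eqref{eq:def_derivative_measure}; conditioning on $\Hc_{t+h}$ turns each term into a Gaussian convolution of $\delta_m\psi$, smooth in the space variable. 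A first--order expansion in $\Delta$ together with the integration by parts above gives $\tfrac{\mathrm{d}}{\mathrm{d}t}Y(t,\mu_t)=\E[Z(t,X_t,\mu_t)B(t,X_t)]$, where the independence of $W_T-W_t$ from $\Hc_t$ is used to match the definition of $Z$. Integrating over $[t,T]$ and inserting the terminal condition yields the displayed identity, while the remaining requirements of \Cref{def:W-BSDE} (continuity of $t\mapsto Y(t,\mu_t)$ and integrability of $Z$) follow from the uniform bound $|Z|\le C(T-r)^{-1/2}$, which is integrable on $[0,T]$.

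The main obstacle is the terminal time. Since $Z$ is singular as $r\uparrow T$ and $\partial_x^2\delta_m Y$ is only of order $(T-t)^{-1}$, the smoothness hypotheses of \Cref{prop:PDE:fromPDE} fail near $T$ and neither the chain--rule nor the PDE argument can be run up to $T$. I would therefore carry out the differentiation (equivalently, apply the It\^o formula along $\mu$) only on $[t,T-\eps]$, where all derivatives are bounded, and then let $\eps\downarrow0$, controlling the drift term by dominated convergence using $\int_0^T(T-r)^{-1/2}\,\mathrm{d}r<\infty$, and establishing the endpoint continuity $Y(T-\eps,\mu_{T-\eps})\to\psi(\mu_T)$ from $M_{T-\eps}\to X_T$ together with the total--variation Lipschitz continuity of $\psi$ inherited from the boundedness of $\delta_m\psi$. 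A secondary technical point, pervasive throughout, is that $\delta_m\psi$ is not assumed differentiable, so every spatial derivative must be produced by the Gaussian kernel rather than by differentiating $\delta_m\psi$ directly.
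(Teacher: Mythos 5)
Your proposal is correct in substance but follows a genuinely different route from the paper. The paper disposes of \Cref{prop:Ex_no_generator} in one line, invoking \Cref{corollary:FK_measure_simplified} with $\overline{B}=0$: that corollary is a general It\^o/Feynman--Kac--type formula on the Wasserstein space, proved in the appendix via an $n$--particle approximation, Girsanov's theorem, a Clark--Ocone representation of the gradient, a propagation--of--chaos limit, and a final mollification step $F_\eps(m):=F(\Lc(\eps W_T+S))$ to dispense with differentiability of $\delta_m\psi$ in $x$; the weight $\frac{\partial_x p(T,y,r,x)}{p(T,y,r,x)}$ appearing there reduces, for the driftless kernel, to exactly your Bismut--Elworthy--Li weight $\frac{W_T-W_r}{T-r}$, so the two proofs compute the same object. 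What your direct computation buys is self--containedness: by exploiting the explicit Gaussian structure ($Y(t,\mu_t)=\psi(\Lc(X_t+W_T-W_t))$, the decomposition $M_{t+h}=A+N$, $M_t=(A-\Delta)+N$, and Gaussian integration by parts to manufacture the spatial derivative the hypotheses on $\delta_m\psi$ do not provide), you avoid the entire particle/Malliavin machinery of \Cref{prop:FK_measure_general}, at the cost of an argument that does not generalize to measure--dependent or nonconstant drifts. Two technical points deserve care. First, your endpoint continuity step is stated too loosely: almost--sure convergence $M_{T-\eps}\to X_T$ does not yield total--variation convergence of the laws, which is what your TV--Lipschitz bound on $\psi$ requires; this is repairable, e.g.\ by Girsanov--Pinsker, since $\Lc(M_{T-\eps})$ and $\mu_T$ are the time--$T$ marginals of the same diffusion run on $[T-\eps,T]$ with drifts $0$ and $B$ respectively, giving $\|\Lc(M_{T-\eps})-\mu_T\|_{TV}\le |B|_\infty\sqrt{\eps/2}$. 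Second, your first--order expansion through \eqref{eq:def_derivative_measure} implicitly needs $\delta_m\psi$ to be continuous in the measure argument (to collapse the interpolated measures as $h\to 0$); this matches the hypothesis actually used by \Cref{corollary:FK_measure_simplified}, and your singular bound $|Z(r,\cdot,\cdot)|\le C(T-r)^{-1/2}$ agrees with the kernel estimate the paper records in \Cref{remark:integrability_Z}, so the $T-\eps$ truncation and dominated convergence close the argument as you describe.
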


\begin{proof}
    This is essentially an application of \Cref{corollary:FK_measure_simplified} by taking $\overline{B}=0$.
\end{proof}

\begin{remark}
    In the classical {\rm BSDE} setting, where $\psi(m) = \langle g, m \rangle$ for some Borel bounded function $g:\R \to \R$, {\rm \Cref{prop:Ex_no_generator}} yields the expected result: no further assumptions on $g$ beyond boundedness and measurability are needed. In contrast, when $\psi$ exhibits nonlinearity in $m$, additional regularity is required—specifically, the existence of a linear functional derivative with respect to $m$. This requirement appears natural, as the problem is now set over the infinite--dimensional space $\Pc(\R)$ rather than the finite--dimensional space $\R$.

\end{remark}

\paragraph*{The case with linear generator} 
Now, let us assume that $f(t,x,m,y,z):=\ell(t,x,m)z$ where the map $\ell$ admits a bounded derivative in $x$ and, a bounded linear functional derivative $\delta_m \ell$ with bounded derivative $\partial_y \delta_m \ell$. Furthermore, the map $(t,x,m) \mapsto \ell(t,x,m)$ and its derivatives $(t,x,m) \mapsto \partial_x\ell(t,x,m)$, $(t,x,m,y) \mapsto\left( \delta_m \ell(t,x,m)(y), \partial_y\delta_m \ell(t,x,m)(y)  \right)$ are continuous in $(x,m,y)$ for each $t$.

\medskip
Let $(t,x,\nu)$ and an $\R$--valued $\Fc_t$--measurable random variable $\xi$ verifying $\Lc(\xi)=\nu$, we introduce the process $\left(X^{t,x,\nu}_s \right)_{s \in [t,T]}$ verifying: $X^{t,x,\nu}_t=x$ and for each $s \in [t,T]$,
\begin{align*}
    \mathrm{d}X^{t,x,\nu}_s= \ell \left(s, X^{t,x,\nu}_s, \nu_s \right) \mathrm{d}s + \mathrm{d}W_s\;\mbox{with}\;\nu_s=\Lc\left(X^{t,\xi,\nu}_s \right).
\end{align*}
We set 
$$
    Y(t,m):=\psi \left( \Lc\left( X^{t,U,m}_T \right) \right)
$$
where $U$ is a random variable independent of $W$ s.t. $\Lc\left( U\right)=m$. 

\begin{proposition} \label{prop:Ex_linear_gen}
    There exists a Borel map $Z:[0,T] \x \R \x \Pc(\R) \to \R$ s.t. the couple $(Y,Z)$ is a {\rm W--BSDE} solution with generator $f$ and terminal map $\psi$. 
\end{proposition}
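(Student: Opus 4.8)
The plan is to reduce the statement to the PDE correspondence of \Cref{prop:PDE:fromPDE}. Writing the linear generator as $f(t,x,m,y,z)=\ell(t,x,m)\,z=\overline{L}(t,x,m,y,z)+z\,\overline{B}(t,x,m)$ with $\overline{L}\equiv 0$ and $\overline{B}:=\ell$, the Hamiltonian reduces to
\[
H(t,m,r,p,\gamma)=\int_\R p(t,x,m)\,\ell(t,x,m)\,m(\mathrm{d}x)+\tfrac{1}{2}\int_\R \gamma(t,m,x)\,m(\mathrm{d}x),
\]
so that the Wasserstein PDE \eqref{eq:W-PDE} becomes the backward Kolmogorov equation of the McKean--Vlasov flow with drift $\ell$. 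Consequently, once I show that $Y(t,m)=\psi\big(\Lc(X^{t,U,m}_T)\big)$ is a smooth solution of \eqref{eq:W-PDE} with terminal condition $\psi$, it suffices to set $Z:=\partial_x\delta_m Y$ and quote \Cref{prop:PDE:fromPDE} to conclude that $(Y,Z)$ is a W--BSDE solution associated to $(f,\psi)$. Boundedness of $\partial_x\delta_m Y$ and of $\ell$ then gives the integrability requirement (since $|f|+|Z|=|\ell Z|+|Z|$ is bounded) and, together with boundedness of $\partial_t Y$, the continuity of $t\mapsto Y(t,\mu_t)$.

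To establish the PDE, I would first invoke the flow (semigroup) property of the McKean--Vlasov dynamics. Writing $\Phi_{t,s}(m):=\Lc(X^{t,U,m}_s)$ for the associated flow map, so that $Y(t,m)=\psi(\Phi_{t,T}(m))$ and $\nu_s:=\Phi_{t,s}(m)$ is the deterministic flow with $\nu_t=m$, the semigroup identity $\Phi_{s,T}\circ\Phi_{t,s}=\Phi_{t,T}$ yields $Y(s,\nu_s)=\psi(\Phi_{t,T}(m))=Y(t,m)$ for all $t\le s\le T$. Hence $s\mapsto Y(s,\nu_s)$ is constant. Differentiating this identity in $s$, using that $\nu_s$ solves the Fokker--Planck equation $\partial_s\nu_s=-\partial_x(\ell(s,\cdot,\nu_s)\nu_s)+\tfrac{1}{2}\partial^2_x\nu_s$ together with the chain rule along flows of measures and an integration by parts, produces exactly \eqref{eq:W-PDE} at $(t,m)$ (evaluating at $s=t$), while $Y(T,\cdot)=\psi$ holds by definition. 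This is precisely the ``reduction to the zero--generator case'' alluded to in the section: the drift $\ell$ is absorbed into the law of the forward flow, just as Girsanov absorbs a linear generator classically, the only difference being that the flow is here of McKean--Vlasov type.

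The main obstacle is the smoothness of $Y$ demanded by \Cref{prop:PDE:fromPDE}, that is, the existence, boundedness and continuity of $\partial_t Y$, $\delta_m Y$, $\partial_x\delta_m Y$ and $\partial^2_x\delta_m Y$. This reduces to the differentiability of the flow $m\mapsto\Lc(X^{t,U,m}_T)$ with respect to its initial law: the linear functional derivative $\delta_m Y(t,m)(\cdot)$ is obtained by the chain rule from $\delta_m\psi$ and the derivative of the flow, the latter solving a linearized forward McKean--Vlasov system whose coefficients are controlled by the assumed bounded quantities $\partial_x\ell$, $\delta_m\ell$ and $\partial_y\delta_m\ell$. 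The delicate point is the spatial regularity, and in particular the boundedness of $\partial^2_x\delta_m Y$, where non--degeneracy of the diffusion is essential: I would perform a Bismut--Elworthy--Li integration by parts against the transition density of $\mathrm{d}X_s=\ell(s,X_s,\nu_s)\,\mathrm{d}s+\mathrm{d}W_s$ to trade spatial derivatives for Malliavin weights, thereby obtaining the representation of $Z=\partial_x\delta_m Y$ foreshadowed by the (commented) formula and the required bounds. Existence and boundedness of $\partial_t Y$ then follow from the PDE itself once the spatial derivatives are controlled. Assembling these flow--regularity estimates is the technical core of the proof; the passage from the PDE to the W--BSDE is afterwards immediate from \Cref{prop:PDE:fromPDE}.
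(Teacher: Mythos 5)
Your overall strategy---absorb the linear generator into the forward McKean--Vlasov flow, show $Y(t,m)=\psi\big(\Lc(X^{t,U,m}_T)\big)$ solves the Kolmogorov equation \eqref{eq:W-PDE} on Wasserstein space, and then invoke \Cref{prop:PDE:fromPDE}---is coherent, but it contains a genuine gap exactly at the point you flag as the ``technical core'': the smoothness of $Y$ demanded by \Cref{prop:PDE:fromPDE}. That proposition requires $Y$ to be smooth in the paper's sense, i.e.\ $\partial_t Y$, $\delta_m Y$, $\partial_x\delta_m Y$ \emph{and} $\partial^2_x\delta_m Y$ all exist, bounded and continuous up to time $T$. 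Under the standing hypotheses of this section, $\psi$ only comes with a bounded linear functional derivative $\delta_m\psi$ (with at most one spatial derivative available), and $\ell$ has only one bounded derivative in $x$ and first-order regularity in $m$. Your Bismut--Elworthy--Li integration by parts cannot close this: each application of the BEL weight trades one spatial derivative for a singular factor of order $(T-t)^{-1/2}$, so even after differentiating $\delta_m\psi$ once in $x$, the second derivative $\partial^2_x\delta_m Y(t,m)(x)$ blows up like $(T-t)^{-1/2}$ as $t\to T$ and is \emph{not} bounded, violating the smoothness hypothesis of \Cref{prop:PDE:fromPDE} at the terminal layer. Producing a bounded $\partial^2_x\delta_m Y$ up to $T$ would require roughly $\delta_m\psi(\cdot)(x)\in C^2_b$ in $x$ (plus corresponding second-order regularity of $\ell$), i.e.\ strictly stronger assumptions than the proposition states; the same issue infects $\partial_t Y$ once you try to read it off from the PDE. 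This is precisely the regularity barrier of the classical-solution literature on linear master equations (e.g.\ \cite{Buckdahn2014meanfield}), which the paper explicitly claims to avoid.

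The paper's own proof takes a different, weaker-regularity route: it is a one-line application of \Cref{prop:FK_measure_general} with $\overline{B}:=\ell$. That result is an It\^o-type formula for $t\mapsto F(\Lc(X^{t,U_t}_T))$ along an arbitrary flow $\mu\in\Lc$ with drift $B$, proved probabilistically ($n$-particle approximation, Clark--Ocone, propagation of chaos, and heat-kernel gradient estimates), and it needs only $\delta_m\psi$ with one continuous spatial derivative $\partial_x\delta_m\psi$---no $\partial^2_x\delta_m\psi$, no time regularity, no classical solvability of \eqref{eq:W-PDE}. The candidate $Z(t,x,m)$ is then read off directly from the formula as a conditional expectation involving the kernel weight $\frac{\partial_x p(T,\cdot,t,x)}{p(T,\cdot,t,x)}$ acting on $\delta_m\psi$, plus a first-variation term $\overline{J}^{t,U_t,\overline{U}_t}_T\,\partial_x\delta_m\psi$ accounting for the measure dependence of $\ell$, with integrability supplied by the Gaussian-type bound on $\partial_x p$ (\Cref{remark:integrability_Z}). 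So your reduction is not wrong in spirit---it is essentially the Feynman--Kac route the paper compares itself against---but as written it proves the proposition only under materially stronger assumptions on $\psi$ and $\ell$, whereas the paper's probabilistic formula is engineered precisely to bypass the classical smoothness you would need.
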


\begin{proof}
    This is an application of \Cref{prop:FK_measure_general} by taking $\overline{B}(t,x,m)=\ell(t,x,m)$.
\end{proof}

\begin{remark}
    In line with the {\rm PDEs} perspective on {\rm W--BSDEs}, {\rm \Cref{prop:Ex_linear_gen}} can be interpreted as an analogue of the classical {\rm Feynman--Kac} representation for linear {\rm PDEs} defined over the Wasserstein space. Similar representation results have been established in the literature for linear equations on the Wasserstein space $($see, for instance, {\rm \cite{Buckdahn2014meanfield}}$)$. However, our result requires fewer assumptions, particularly regarding the regularity in the measure variable for the coefficients.

\end{remark}

\subsubsection{The case with Lipschitz generator} 

\paragraph*{Generator Lipschitz 
in $y$ linear in $z$}

We consider the case where the generator is given by
\[
f(t,x,m,y,z) := h(t,m,y) + P(t,x)z,
\]
where the function $h$ admits a linear functional derivative in the measure variable, denoted by $\delta_m h(t,m,y)(e)$. We assume that the map
\[
[0,T] \times \R \times \Pc(\R) \x \R \x \R \ni (t,x,m,y,e) \mapsto \left(h(t,m,y),\, \delta_m h(t,m,y)(e),\, P(t,x)\right) \in \R^3
\]
is Borel measurable and bounded, and is Lipschitz continuous in $(x,y)$ uniformly with respect to $(t,m)$.


\begin{proposition} \label{prop:existY}
    There exists a couple $(Y,Z)$ a {\rm W--BSDE} solution with generator $f$ and terminal map $\psi$.
\end{proposition}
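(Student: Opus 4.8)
The plan is to treat $f(t,x,m,y,z)=h(t,m,y)+P(t,x)z$ as a Lipschitz--in--$y$ perturbation of the linear--in--$z$ equation already solved in \Cref{prop:Ex_linear_gen}, removing the $y$--dependence by a fixed--point argument run along the flow of measures generated by the drift $P$. Since $P(t,x)$ carries no measure dependence, let $\Theta_{t,s}$ denote the ordinary Markov flow on $\Pc(\R)$ associated with $\mathrm{d}X_s=P(s,X_s)\,\mathrm{d}s+\mathrm{d}W_s$, i.e. $\Theta_{t,s}m=\Lc(X_s)$ when $\Lc(X_t)=m$; it satisfies the semigroup property $\Theta_{s,u}\circ\Theta_{t,s}=\Theta_{t,u}$. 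First I would construct the candidate $Y$ directly. For each $(t,m)$, consider the scalar backward ODE
$$\phi(s)=\psi(\Theta_{t,T}m)+\int_s^T h\big(u,\Theta_{t,u}m,\phi(u)\big)\,\mathrm{d}u,\qquad s\in[t,T],$$
which admits a unique bounded solution by Cauchy--Lipschitz, using that $h$ is bounded and Lipschitz in $y$. Setting $Y(t,m):=\phi(t)$, the semigroup property together with uniqueness of the ODE yields the flow--consistency identity $Y(s,\Theta_{t,s}m)=\phi(s)$, so that
$$Y(t,m)=\psi(\Theta_{t,T}m)+\int_t^T h\big(s,\Theta_{t,s}m,Y(s,\Theta_{t,s}m)\big)\,\mathrm{d}s.$$
Equivalently $Y$ is the uniform limit of the Picard iterates $Y^{(n+1)}(t,m):=\psi(\Theta_{t,T}m)+\int_t^T h(s,\Theta_{t,s}m,Y^{(n)}(s,\Theta_{t,s}m))\,\mathrm{d}s$, which converge by the standard $\frac{(\mathrm{Lip}(h)(T-t))^n}{n!}$ estimate.

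Next I would introduce the frozen source $g(t,m):=h(t,m,Y(t,m))$ and apply the Feynman--Kac representation \Cref{prop:FK_measure_general} with drift $\overline B(t,x,m):=P(t,x)$ and running term $\overline L:=g$ (independent of $y,z$), producing a pair $(\widetilde Y,Z)$ that solves the W--BSDE with the now $y$--independent generator $\widetilde f(t,x,m,y,z)=g(t,m)+P(t,x)z$. The crucial point is that $\widetilde Y=Y$: evaluating the Feynman--Kac value function along the $P$--flow, that is the case $B=P$ in which the $Z$--term drops out, gives $\widetilde Y(t,m)=\psi(\Theta_{t,T}m)+\int_t^T g(s,\Theta_{t,s}m)\,\mathrm{d}s$, and since $g(s,\cdot)=h(s,\cdot,Y(s,\cdot))$ this is exactly the defining equation of $Y$, hence $\widetilde Y(t,m)=Y(t,m)$ for every $(t,m)$.

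Finally, along any $\mu\in\Lc$ associated with a drift $B$ one has the pointwise identity $f(r,X_r,\mu_r,Y(r,\mu_r),Z)=h(r,\mu_r,Y(r,\mu_r))+P\,Z=g(r,\mu_r)+P\,Z=\widetilde f(r,X_r,\mu_r,Y(r,\mu_r),Z)$, so the W--BSDE relation \eqref{eq:WBSDE_def} for $f$ coincides with the one already verified for $\widetilde f$; the continuity of $t\mapsto Y(t,\mu_t)$ and the integrability condition are inherited from \Cref{prop:FK_measure_general}. This establishes that $(Y,Z)$ is a W--BSDE solution for $f$ in the sense of \Cref{def:W-BSDE}.

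The main obstacle is regularity in the measure variable: to legitimately invoke \Cref{prop:FK_measure_general} for the source $g$ I must show that $m\mapsto Y(t,m)$ admits a bounded linear functional derivative $\delta_m Y$, so that $g(t,m)=h(t,m,Y(t,m))$ inherits a bounded $\delta_m g$ through the chain rule $\delta_m g(t,m)(e)=\delta_m h(t,m,Y)(e)+\partial_y h(t,m,Y)\,\delta_m Y(t,m)(e)$. I would obtain this by differentiating the flow fixed--point equation for $Y$: the sensitivity of $\Theta_{t,s}m$ to $m$ is controlled because $P$ is bounded and Lipschitz in $x$, and combining this with the bounded derivatives $\delta_m\psi,\delta_m h$ and a Gronwall argument in the Lipschitz constant of $h$ yields boundedness of $\delta_m Y$ uniformly in $(t,m)$. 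A secondary technical point is the differentiability of $h$ in $y$: as only Lipschitz continuity is assumed, I would either use $\partial_y h$ where it exists or pass through a mollification of $h$ in $y$ and remove it at the end, exploiting the stability of the whole construction in the sup--norm.
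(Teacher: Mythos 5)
Your proposal is correct in substance and follows the same two--step skeleton as the paper's proof: first decouple the $y$--dependence by a deterministic fixed point along the flow generated by $P$ (your flow--consistency equation is exactly the equation solved in \Cref{prop:existence:Y}), then freeze the source $g(t,m)=h(t,m,Y(t,m))$ and produce $Z$ from the linear--functional--derivative representation along the flow. The differences are in execution. You build $Y$ flowwise by a scalar backward ODE, which gives $Y$ globally in $T$ with no smallness condition; the paper instead runs a Banach fixed point in the combined norm $|y|_\infty+|\delta_m y|_\infty$, which is a contraction only for small maturity, and then glues solutions over subintervals (its Step 2) to reach arbitrary $T$. Your Gronwall route could in principle dispense with the gluing, but note that Gronwall by itself only delivers an a priori bound on $\delta_m Y$: to obtain its \emph{existence} you must propagate linear functional derivatives through the Picard iterates and pass to the limit (the affine recursion of the form $\delta_m Y^{(n+1)}=(\text{source})+\partial_y h\,\E\bigl[\delta_m Y^{(n)}(\cdots)\bigr]$ does converge for all $T$, so this can be made rigorous, but it is the one step your sketch leaves implicit, and it is precisely where the paper pays with its smallness condition plus interval--pasting). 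Second, your citation of \Cref{prop:FK_measure_general} ``with running term $\overline{L}:=g$'' is loose: that proposition is a pure It\^o--type formula for a \emph{terminal} functional and has no running--cost slot. What is actually needed --- and what the paper does --- is to apply the formula at each time $r$ to $m\mapsto g(r,m)$ and to $\psi$ separately, integrate in $r$, and define $Z$ as the sum of the two resulting conditional expectations involving $\partial_x p/p$; moreover, since $\overline{B}=P$ carries no measure dependence, \Cref{corollary:FK_measure_simplified} suffices, which is convenient because it does not require $\delta_m g$ to be differentiable in $x$ (a regularity your $g$ does not obviously possess). Finally, your mollification remark on $\partial_y h$ addresses a genuine mismatch --- the statement of the proposition assumes $h$ only Lipschitz in $y$, while the appendix fixed--point result the paper invokes assumes differentiability in $y$ --- so on that point your plan is, if anything, more careful than the paper's own write--up.
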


\paragraph*{General Lipschitz} We are not able to provide a general existence result for generator Lipschitz in $z$. However, we can give a condition that leads directly to a solution. 

\medskip
Given Borel maps $y:[0,T] \x \Pc(\R)\to \R$ and $z:[0,T] \x \R \x \Pc(\R) \to \R$ s.t. for each $(t,m)$, we can define
\begin{align*}
    {K}^{y,z}(t,m):=\int_{\R} f(t,x,m,y(t,m),z(t,x,m))m(\mathrm{d}x),
\end{align*}
and for a random variable $U$ independent of $W$ s.t. $\Lc(U)=m$, 
\begin{align*} 
    \overline{K}^{y,z}(t,m):=\int_t^T {K}^{y,z}\left(s,\Lc(W_s-W_t+U) \right) \;\mathrm{d}s + \psi(\Lc(W_T-W_t+U))
\end{align*}
and 
\begin{align*} 
    \delta_m\overline{K}^{y,z}(t,m)(x)&:= \E \left[\int_t^T   \delta_m {K}^{y,z}\left(s, \Lc(W_s-W_t+U) \right) (W_s-W_t + x)  \frac{W_s-W_t}{s-t} \mathrm{d}s \right] 
    \\
    &~~~~~~+ \E \left[ \delta_m \psi (\Lc(W_T-W_t+U))(W_T-W_t + x) \frac{W_T-W_t}{T-t}  \right].
\end{align*}

\begin{proposition} \label{pro:reduced_existence}
    If there exist Borel maps $Y:[0,T] \x \Pc(\R) \to \R$ and $Z:[0,T] \x \R \x \Pc(\R) \to \R$ s.t. for any $\mu \in \Lc$, the map $t \mapsto Y(t,\mu_t)$ is continuous, $\E \left[ \int_0^T \left|f \left( r, X_r, \mu_r, Y(r,\mu_r), Z \left(r, X_r, \mu_r \right) \right) \right| + \left|Z\left(r, X_r,\mu_r \right) \right|\; \mathrm{d}r \right] < \infty$ and for $(t,x,m)$,
    \begin{align} \label{eq:reduced}
        Y(t,m)=\overline{K}^{Y,Z}(t,m)\;\mbox{and}\;Z(t,x,m)=\delta_m\overline{K}^{Y,Z}(t,m)(x)
    \end{align}
    then $(Y,Z)$ is a {\rm W--BSDE} solution associated with $(f,\psi)$.

\end{proposition}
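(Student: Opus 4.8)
The plan is to verify the defining identity \eqref{eq:WBSDE_def} of \Cref{def:W-BSDE} directly, exploiting the fact that the pair in \eqref{eq:reduced} is given by an explicit Feynman--Kac representation along the driftless flow. Throughout, fix $\mu \in \Lc$ associated to a bounded drift $B$, let $X$ be the associated diffusion with $\Lc(X_s)=\mu_s$, and write $\bar\mu^{t,m}_s := \Lc(W_s-W_t+U)$ for the driftless flow issued from $m$ at time $t$ (with $U \sim m$ independent of $W$), so that by definition $\overline{K}^{Y,Z}(t,m)=\int_t^T K^{Y,Z}(s,\bar\mu^{t,m}_s)\,\mathrm{d}s+\psi(\bar\mu^{t,m}_T)$. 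First I would record the elementary identity $\E[f(r,X_r,\mu_r,Y(r,\mu_r),Z(r,X_r,\mu_r))]=K^{Y,Z}(r,\mu_r)$, valid because $X_r\sim\mu_r$ and $Y(r,\mu_r)$ is deterministic; integrating in $r$ turns the generator term of \eqref{eq:WBSDE_def} into $\int_t^T K^{Y,Z}(r,\mu_r)\,\mathrm{d}r$. Using the fixed--point relations \eqref{eq:reduced}, the whole proposition then reduces to the single Feynman--Kac identity
\begin{align*}
    \overline{K}^{Y,Z}(t,\mu_t)=\psi(\mu_T)+\int_t^T K^{Y,Z}(r,\mu_r)\,\mathrm{d}r-\E\Big[\int_t^T \delta_m\overline{K}^{Y,Z}(r,\mu_r)(X_r)\,B(r,X_r)\,\mathrm{d}r\Big].
\end{align*}

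The strategy for this identity is to split $\overline{K}^{Y,Z}$ into its terminal and running parts and treat each by the no--generator representation. For the terminal part, I would apply \Cref{prop:Ex_no_generator} with terminal map $\psi$: it states precisely that $\psi(\bar\mu^{t,m}_T)$ equals $\psi(\mu_T)-\E[\int_t^T Z_\psi(r,X_r,\mu_r)B(r,X_r)\,\mathrm{d}r]$, where $Z_\psi(r,x,m)=\E[\delta_m\psi(\Lc(W_T-W_r+U))(W_T-W_r+x)\tfrac{W_T-W_r}{T-r}]$ is exactly the terminal summand in the definition of $\delta_m\overline{K}^{Y,Z}(r,\cdot)(\cdot)$. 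For the running part, the key observation is that $\int_t^T K^{Y,Z}(s,\bar\mu^{t,m}_s)\,\mathrm{d}s$ is a superposition, over intermediate maturities $s\in[t,T]$, of terminal--value problems with terminal map $\Phi_s:=K^{Y,Z}(s,\cdot)$. Applying the time-$s$ analogue of \Cref{prop:Ex_no_generator} (the identical construction, with terminal time $s$ in place of $T$) to each $\Phi_s$ yields, for every $s$,
\begin{align*}
    K^{Y,Z}(s,\bar\mu^{t,m}_s)=K^{Y,Z}(s,\mu_s)-\E\Big[\int_t^s Z_s(r,X_r,\mu_r)\,B(r,X_r)\,\mathrm{d}r\Big],
\end{align*}
with $Z_s(r,x,m)=\E[\delta_m K^{Y,Z}(s,\Lc(W_s-W_r+U))(W_s-W_r+x)\tfrac{W_s-W_r}{s-r}]$.

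It then remains to integrate the last display over $s\in[t,T]$ and to exchange the order of integration on the triangle $\{t\le r\le s\le T\}$ by Fubini's theorem; this is where the standing integrability hypothesis $\E[\int_t^T |Z|\,\mathrm{d}r]<\infty$ together with the boundedness of $B$ is used to guarantee absolute integrability. The Fubini exchange produces exactly $\E[\int_t^T Z_K(r,X_r,\mu_r)B(r,X_r)\,\mathrm{d}r]$ with $Z_K(r,x,m)=\int_r^T Z_s(r,x,m)\,\mathrm{d}s$, which is precisely the running summand in the definition of $\delta_m\overline{K}^{Y,Z}(r,\cdot)(\cdot)$. Adding the terminal and running identities, and using that $\delta_m\overline{K}^{Y,Z}=Z_\psi+Z_K$ by definition, gives the Feynman--Kac identity above and hence \eqref{eq:WBSDE_def}; the continuity of $t\mapsto Y(t,\mu_t)$ is part of the hypotheses, so no further regularity work is needed. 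The main obstacle I anticipate is the legitimacy of invoking \Cref{prop:Ex_no_generator} at each intermediate maturity $s$: this requires each frozen terminal map $\Phi_s=K^{Y,Z}(s,\cdot)$ to enjoy the boundedness and bounded--linear--derivative properties demanded there, which must be extracted from the structural assumptions guaranteeing that $\overline{K}^{Y,Z}$ and $\delta_m\overline{K}^{Y,Z}$ are well defined; alternatively, one re--runs the Girsanov-plus-Bismut--Elworthy--Li argument underlying \Cref{prop:Ex_no_generator} uniformly in $s$, which only needs the local integrability already assumed. Verifying this uniformity, together with the joint measurability of $(r,s)\mapsto Z_s(r,\cdot,\mu_r)$ required for the Fubini step, is the principal technical burden.
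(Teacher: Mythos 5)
Your proposal is correct and takes essentially the same route as the paper: the paper's proof likewise applies the driftless Feynman--Kac representation (\Cref{corollary:FK_measure_simplified}, of which \Cref{prop:Ex_no_generator} is the $\overline{B}=0$ special case you invoke) to each frozen map $K^{Y,Z}(s,\cdot)$ and to $\psi$, comparing the heat flow issued at time $t$ with the actual flow $(\mu_s)_{s}$, and then integrates over the intermediate maturities $s$ so that the fixed--point relations \eqref{eq:reduced} reconstruct the drift term $\E\left[\int_t^T Z(r,X_r,\mu_r)B(r,X_r)\,\mathrm{d}r\right]$. The only differences are presentational: you make explicit the Fubini interchange on the triangle $\{t\le r\le s\le T\}$ and the regularity of each frozen terminal map $K^{Y,Z}(s,\cdot)$, both of which the paper's terse proof leaves implicit (the existence of $\delta_m K^{Y,Z}$ being presupposed by the very definition of $\delta_m\overline{K}^{Y,Z}$), and you verify the integrated identity directly rather than writing the dynamics $\mathrm{d}Y(t,\mu_t)$, which is if anything slightly more careful.
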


\begin{proof}
    It is clear that $Y(T,\mu_T)=\psi(\mu_T)$. We just need to compute the dynamics of $Y(t,\mu_t)$. By an application of \Cref{corollary:FK_measure_simplified}, we get, for $s \le t$,
    \begin{align*}
         &K^{Y,Z} \left(s, \Lc\left( W_s-W_t + U_t \right) \right)
         \\&=K^{Y,Z}\left(s,\Lc(U_s) \right) 
         + \E \left[ \int_t^s \delta_m K^{Y,Z}\left(s, \Lc\left( W_s-W_r + U_r \right)  \right) \left( W_s-W_r + U_r \right) \frac{W_s-W_r}{s-r} \left( 0 - B(r,U_r) \right) \mathrm{d}r \right].
    \end{align*}
    Therefore, 
    \begin{align*}
        \mathrm{d}Y(t,\mu_t)= -\E \left[ f(t,X_t,\mu_t, Y(t,\mu_t), Z(t,X_t,\mu_t)) \right]\mathrm{d}t + \E \left[ Z(t,X_t,\mu_t) B(t,X_t) \mathrm{d}t \right].
    \end{align*}
    
\end{proof}

\begin{remark}
    {\rm \Cref{pro:reduced_existence}} shows that the existence of solutions to the {\rm W--BSDE} can be reduced to solving a pair of equations, namely {\rm \eqref{eq:reduced}}. Moreover, it can be shown that any solution to a {\rm W--BSDE} satisfies {\rm \eqref{eq:reduced}} in a weaker sense. This observation highlights the relevance of understanding {\rm \eqref{eq:reduced}} in the broader study of {\rm W--BSDEs}.

\medskip
However, the system {\rm \eqref{eq:reduced}} presents notable difficulties. Unlike standard fixed--point formulations, it does not lend itself easily to classical methods. The equation for $Z$ involves $Z$ itself on the left--hand side, while the right--hand side potentially includes its linear functional derivative with respect to the measure, $\delta_m Z$. This creates an asymmetry: while $Z$ may be expressed as a function of $\delta_m Z$, we cannot directly express $\delta_m Z$ as a function of $Z$. This complicates the construction of a solution, particularly through fixed--point techniques.

\medskip
We therefore leave this challenging and intriguing question for future research. It is worth emphasizing, however, that the difficulty involving $Z$ and $\delta_m Z$ in {\rm \eqref{eq:reduced}} may still be milder than the one encountered in seeking classical solutions of {\rm PDEs} over the Wasserstein space, since {\rm \eqref{eq:reduced}} is linear in $\delta_m Z$. As such, the {\rm W--BSDE} framework might offer a more tractable alternative for studying {\rm PDEs} on the space of probability measures.
\end{remark}

\subsubsection{The case of quadratic generator} 

It is well known in the classical theory of {\rm BSDEs} that quadratic generators can be treated under suitable conditions. Although we are not yet able to address the case of general Lipschitz generators in the {\rm W--BSDE} framework, we are able to handle the specific case of a quadratic generator, namely $f(z) := \frac{1}{2} z^2$. This is made possible by the fact that an explicit solution to the corresponding {\rm W--BSDE} can be constructed in this setting.


\medskip
Given two probability measures $m$ and $m'$, we write $m * m'$ the measure defined by $m * m':=\Lc(X+X')$ where $X$ and $X'$ are independent, and $\Lc(X)=m$ and $\Lc(X')=m'$. We introduce the relative entropy $D$ by \begin{align*}
        D(m|\mu):=\int_{\R} \frac{\mathrm{d}m}{\mathrm{d}\mu} \log \left( \frac{\mathrm{d}m}{\mathrm{d}\mu}  \right) \mathrm{d}\mu\mbox{ if }m \ll\mu,\;D(m|\mu)=\infty\mbox{ otherwise}.
    \end{align*}

    \medskip
    We assume that \( \psi \) is continuous bounded, concave, and admits a continuous, bounded linear functional derivative \( \delta_m \psi(m)(x) \), which is differentiable in \( x \) with continuous and bounded derivative \( \partial_x \delta_m \psi \).

\begin{proposition} \label{prop:existSquare}
      There exists $(Y,Z)$ a {\rm W--BSDE} solution with quadratic generator and terminal map $\psi$ satisfying:
    \begin{align*}
        Y(t,m)
        =
        \sup_{m' \in \Pc(\R)} \psi(m'*m) -D\left(m'\;|\;\Lc(W_{T-t}) \right)
    \end{align*}
    and
    \begin{align*}
        Z(t,x,m)
        =
         \int_{\R}  \partial_x \delta_m {\psi}\left( \nu(t,m) * m \right) \left(z + x \right)
        \nu(t,m)(\mathrm{d}z)
    \end{align*}
    where $\nu(t,m)$ is verifying:
    \begin{align*}
        \nu(t,m)(\mathrm{d}z)
        =
        \frac{\exp\left\{ \int_{\R}  \delta_m {\psi}(\nu(t,m)*m)\left( z+x\right) m(\mathrm{d}x) \right\} }{\int_{\R} \exp\left\{ \int_\R  \delta_m {\psi}(\nu(t,m)*m)\left( z'+ x\right) m(\mathrm{d}x)\right\} \Lc\left(W^1_{T-t}\right)(\mathrm{d}z') } \Lc\left(W^1_{T-t}\right)(\mathrm{d}z).
    \end{align*}
\end{proposition}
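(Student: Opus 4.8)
The plan is to identify $(Y,Z)$ as the value function and feedback gradient of the entropic control problem obtained by Legendre--transforming the quadratic generator, and then to verify that $Y$ is a \emph{classical} solution of the associated quadratic Wasserstein PDE, so that the conclusion reduces to the It\^o computation already carried out in the proof of \Cref{prop:PDE:fromPDE}. Writing $\gamma_s := \Lc(W_s)$, I read the candidate
$$
    Y(t,m) = \sup_{m' \in \Pc(\R)} \big\{ \psi(m'*m) - D(m'\,|\,\gamma_{T-t}) \big\}
$$
as the value of tilting the law of the Brownian increment $W_T - W_t$, at a relative--entropy cost, so as to maximise $\psi$ of the resulting terminal law $m'*m$. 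This is precisely the Gibbs dual of the Hamiltonian $\frac12 p^2 = \sup_a\{a p - \frac12 a^2\}$, i.e. the generator $f=\frac12 z^2$ with $\overline{B}\equiv 0$ and $\overline{L}(t,x,m,y,z)=\frac12 z^2$ in the language of \Cref{prop:PDE:fromPDE}. The target is then to show that $Y$ solves
$$
    \partial_t Y(t,m) + \tfrac12 \int_\R \partial^2_x \delta_m Y(t,m)(x)\, m(\mathrm{d}x) + \tfrac12 \int_\R \big( \partial_x \delta_m Y(t,m)(x) \big)^2 m(\mathrm{d}x) = 0, \qquad Y(T,\cdot) = \psi,
$$
with $Z(t,x,m)=\partial_x\delta_m Y(t,m)(x)$, after which \Cref{prop:PDE:fromPDE} gives the W--BSDE property verbatim. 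The terminal condition is consistent since $\gamma_{T-t}\to\delta_0$ as $t\to T$, forcing $m'=\delta_0$ and $Y(T,m)=\psi(\delta_0*m)=\psi(m)$.

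First I would establish the variational representation together with its optimiser. For fixed $(t,m)$ the functional $m'\mapsto \psi(m'*m) - D(m'\,|\,\gamma_{T-t})$ is concave: convolution against $m$ is linear, $\psi$ is concave by assumption, and $-D(\,\cdot\,|\,\gamma_{T-t})$ is strictly concave with weakly compact sublevel sets, so the supremum is finite (as $\psi$ is bounded) and attained at a unique maximiser. Its first--order condition reads, for the chain rule on the convolution,
$$
    \int_\R \delta_m \psi(m'*m)(z+x)\, m(\mathrm{d}x) - \log \frac{\mathrm{d}m'}{\mathrm{d}\gamma_{T-t}}(z) = \text{const},
$$
which is exactly the self--consistent Gibbs identity defining $\nu(t,m)$. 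Existence of this fixed point follows from a Schauder argument using the boundedness of $\delta_m\psi$, and uniqueness from the strict concavity. An envelope--theorem computation then yields $\delta_m Y(t,m)(x) = \int_\R \delta_m\psi(\nu(t,m)*m)(z+x)\,\nu(t,m)(\mathrm{d}z)$, and differentiating once in $x$ recovers the stated formula for $Z$; boundedness of $Z$ follows from $\|\partial_x\delta_m\psi\|_\infty$ since $\nu(t,m)$ is a probability measure, and boundedness of $Y$ from $\inf\psi \le Y \le \sup\psi$ (take $m'=\gamma_{T-t}$ for the lower bound).

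It remains to compute $\partial_t Y$ and the second spatial derivative and to check the PDE. By the envelope theorem again, $\partial_t Y(t,m) = -\partial_t D(\nu(t,m)\,|\,\gamma_{T-t})$, evaluated at the frozen optimiser, and the key mechanism is that $\gamma_s$ solves the heat equation, so $\partial_t \gamma_{T-t} = -\frac12 \partial^2_x \gamma_{T-t}$; this is what converts the time derivative of the entropy term into the Laplacian and quadratic terms of the Hamiltonian. I expect the cleanest route is to feed the explicit Gibbs density of $\nu(t,m)$ with respect to $\gamma_{T-t}$ into the Feynman--Kac type identity of \Cref{corollary:FK_measure_simplified} (the same tool used in \Cref{prop:Ex_no_generator}), which simultaneously produces $\partial_t$ and $\partial^2_x\delta_m Y$ and exhibits the exact cancellation predicted by the HJB equation.

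The \textbf{main obstacle} is regularity: \Cref{prop:PDE:fromPDE} demands that $Y$ be smooth with \emph{bounded} $\partial_t Y$, $\delta_m Y$ and $\partial^2_x \delta_m Y$, yet the hypotheses on $\psi$ only furnish a bounded $\partial_x\delta_m\psi$ and not a second spatial derivative. I would recover the missing smoothness from the Gaussian reference measure $\gamma_{T-t}$: convolution with $\gamma_{T-t}$ inside the formula for $\delta_m Y$ lets one transfer one derivative onto the smooth Gibbs density, producing $\partial^2_x\delta_m Y$ by integration by parts even though $\partial^2_x\delta_m\psi$ is unavailable. Making this rigorous, together with justifying the differentiation of the value function through the fixed point $\nu(t,m)$ (continuity and differentiability of $(t,m)\mapsto\nu(t,m)$, via the implicit structure of the Gibbs equation and the strict concavity), is the technical heart of the proof; once this is in place, the PDE verification and the appeal to \Cref{prop:PDE:fromPDE} are routine.
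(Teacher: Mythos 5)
Your proposal takes a genuinely different route from the paper. You propose a verification argument: show that the variational formula defines a \emph{classical} solution of the quadratic master HJB equation and then invoke \Cref{prop:PDE:fromPDE}. The paper never establishes (and never needs) classical regularity of $Y$: it solves the $n$--particle quadratic BSDE explicitly through the exponential Cole--Hopf change $P^n_t = e^{\overline{Y}^n_t}$, identifies $nZ^i_t$ as a Gibbs--weighted conditional expectation, and passes to the limit $n \to \infty$ under a Girsanov tilting by means of large deviations (a Sanov/G\"artner--Ellis argument, Varadhan's integral lemma, and the Donsker--Varadhan formula). The variational formula for $Y$ and the Gibbs fixed point $\nu(t,m)$ emerge from the rate function and the tilting property, and the W--BSDE relation is obtained directly in its integrated form. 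This is precisely what allows the paper to work with only $\partial_x \delta_m \psi$ bounded, as emphasized in the remark following the proposition. Your correct ingredients (uniqueness of the maximizer by strict concavity, the first--order condition giving the Gibbs identity, and the envelope formula $\delta_m Y(t,m)(x) = \int_\R \delta_m\psi(\nu(t,m)*m)(z+x)\,\nu(t,m)(\mathrm{d}z)$, whose $x$--derivative is the stated $Z$) do recover the same objects the paper identifies, but the paper reaches them without ever differentiating the value function.

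However, as written your argument has a genuine gap, located exactly where you flag it but not closed by what you propose. \Cref{prop:PDE:fromPDE} requires $\partial_t Y$, $\delta_m Y$, $\partial_x \delta_m Y$ and $\partial^2_x \delta_m Y$ to be continuous and \emph{bounded} on all of $[0,T]$ (and its statement moreover assumes a bounded $\overline{L}$, whereas $\tfrac12 z^2$ is unbounded, so even the framework needs the extension alluded to in the paper's remark). Your device for producing the missing second derivative---integration by parts against the Gaussian factor of $\nu(t,m)$---necessarily brings out the logarithmic derivative of the heat kernel, i.e.\ a factor of order $z/(T-t)$ with $|z|$ of typical size $\sqrt{T-t}$ under $\nu(t,m)$; the resulting bound is $\left|\partial^2_x \delta_m Y(t,m)(x)\right| = O\bigl((T-t)^{-1/2}\bigr)$, which is $\mathrm{d}t$--integrable but \emph{not} bounded up to maturity. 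Hence the verification proposition does not apply verbatim, and you would at minimum need a localized verification on $[0,T-\varepsilon]$ together with a limiting argument using continuity of $Y$ at $T$, which you do not supply. A second unproven step is the differentiability of $(t,m) \mapsto \nu(t,m)$ through the implicit Gibbs equation: your envelope computations for $\delta_m Y$ and $\partial_t Y$ presuppose stable, differentiable dependence of the maximizer on $(t,m)$ (an infinite--dimensional implicit function theorem), while strict concavity yields only uniqueness and, with some work, continuity. Until these two points are carried out, the proposal is a plausible programme rather than a proof; the paper's probabilistic route should be read as a deliberate way of circumventing exactly these regularity obstructions.
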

\begin{remark}
    {\rm (i) }A nonlinear dependence in the variable $z$ is addressed in {\rm \Cref{prop:existSquare}}. In the context of the {\rm (MFC)} problem, this result can be interpreted—via a straightforward extension of {\rm \Cref{prop:McK-fromBSDE}} to the unbounded setting—as providing an explicit characterization of the optimal control when the data takes the form $L(t,x,m,a) = -\frac{1}{2}a^2$ and $b(t,x,m,a) = a$. To the best of our knowledge, this type of explicit solution for the {\rm MFC} problem has not been previously obtained under the relatively mild conditions we impose on $\psi$, which are weaker than those typically assumed in the existing literature relying on the Pontryagin principle.

\medskip
    {\rm (ii) } 
    By following the same approach as in the proof of {\rm \Cref{prop:existSquare}} $($see \Cref{sec:proof:square}$)$, one can directly extend the comparison principle established in {\rm \Cref{prop:comparision}} to the quadratic case. This, in turn, implies the existence of a viscosity solution to the corresponding {\rm PDE} on the Wasserstein space.

\medskip
    {\rm (iii)} Observe that the distribution $\nu(t,m)$ identified in {\rm \Cref{prop:existSquare}} satisfies a nonlinear Fokker--Planck-type equation and admits an explicit representation as a Gibbs measure. This structure is significant, as it reveals a deep connection between the formulation of {\rm W--BSDEs} and equilibrium distributions commonly encountered in statistical mechanics. In particular, the Gibbs property encodes optimality and entropy--regularized control features, offering further insight into the analytical behavior and regularity of solutions to {\rm W--BSDEs}.


    

\end{remark}

\section{Proofs of the main results}  \label{sec:proof}

\subsection{Optimal control of McKean--Vlasov equations}

\subsubsection{Proof of \Cref{prop:McK-fromBSDE}}

\begin{proof}
    Let $\alpha \in \Ac$ and $\mu \in \Lc$. We recall that $\mu_s=\Lc(S_s)$ where $\mathrm{d}S_s=B(s, S_s)\mathrm{d}s + \mathrm{d}W_s$ where $B$ is a bounded Borel map. Let $\overline{b}(s,x,m,a):=B(s,x) \1_{s \le t} + b(t,x,m,a) \1_{s > t} $. Let $(U)_{s \in [0,T]}$ be the process satisfying $\mathrm{d}U_s= \overline{b}(s,U_s,\Lc(U_s), \alpha(s,U_s))\mathrm{d}s + \mathrm{d}W_s$ with $\Lc(U_0)=\mu_0$. The process $U$ is well--defined in distribution by \cite[Theorem 2.4.]{lacker2018OnStrong}. Since $\overline{b}$ is bounded, we can see that $\left( \eta_s:=\Lc(U_s) \right)_{s \in [0,T]}$ belongs to $\Lc$, and we can check that $\Lc(X^{t,\mu_t,\alpha}_s)=\Lc(U_s)=\eta_s$ for any $s \in [t,T]$. Therefore, as $(Y,Z)$ is a W--BSDE solution,
    \begin{align*}
        &Y(t,\mu_t)=Y(t,\eta_t)
        \\
        &=
        \psi(\eta_T) + \E \left[ \int_t^T f \left( r, U_r, \eta_r, Y(r,\eta_r), Z \left(r,U_r, \eta_r \right) \right) \mathrm{d}r \right] - \E\left[ \int_t^T Z\left(r,U_r,\eta_r \right) b\left(r,U_r,\eta_r,\alpha(r,U_r) \right) \mathrm{d}r \right]
        \\
        &=
        \psi(\mu^\alpha_T) + \E \left[ \int_t^T f \left( r, X^{t,\mu_t,\alpha}_r, \mu^\alpha_r, Y(r,\mu^\alpha_r), Z \left(r,X^{t,\mu_t,\alpha}_r, \mu^\alpha_r \right) \right) \mathrm{d}r \right] 
        \\
        &~~~~~~~~~~~~- \E\left[ \int_t^T Z\left(r,X^{t,\mu_t,\alpha}_r,\mu^\alpha_r \right) b\left( r,X^{t,\mu_t,\alpha}_r,\mu^\alpha_r, \alpha(r,X^{t,\mu_t,\alpha}_r) \right)\mathrm{d}r \right].
    \end{align*}
    Consequently,
    \begin{align*}
        &J(t,\mu_t,\alpha)=\E \left[ \int_t^T L \left(s, X^{t,\mu_t,\alpha}_s, \mu^\alpha_s, \alpha(s,X^{t,\mu_t,\alpha}_s) \right) \mathrm{d}s + \psi(\mu^\alpha_T) \right]
        \\
        &= \E \left[ \int_t^T L \left(s, X^{t,\mu_t,\alpha}_s, \mu^\alpha_s, \alpha(s,X^{t,\mu_t,\alpha}_s) \right) \mathrm{d}s \right] + Y(t,\mu_t) 
        \\
        &~~~~~
        - \E \left[ \int_t^T f \left( r, X^{t,\mu_t,\alpha}_r, \mu^\alpha_r, Z \left(r,X^{t,\mu_t,\alpha}_r , \mu^\alpha_r\right) \right) \mathrm{d}r \right] 
        + \E\left[ \int_t^T Z\left(r,X^{t,\mu_t,\alpha}_r,\mu^\alpha_r \right) b\left( r,X^{t,\mu_t,\alpha}_r,\mu^\alpha_r, \alpha(r,X^{t,\mu_t,\alpha}_r) \right)\mathrm{d}r \right]
        \\
        &=\E \left[ \int_t^T h \left( r, X^{t,\mu_t,\alpha}_r, \mu^\alpha_r, Z \left(r,X^{t,\mu_t,\alpha}_r, \mu^\alpha_r \right), \alpha(r, X^{t,\mu_t,\alpha}_r) \right) - f \left( r, X^{t,\mu_t,\alpha}_r, \mu^\alpha_r, Z \left(r,X^{t,\mu_t,\alpha}_r , \mu^\alpha_r\right) \right) \mathrm{d}r + Y(t,\mu_t)  \right],
    \end{align*}
    by the definition of $f$, $J(t,\mu_t,\alpha) \le Y(t,\mu_t)$.
    This is true for any $\alpha \in \Ac$. Then, $V(t,\mu_t) \le Y(t,\mu_t)$. By taking $\widehat{\alpha}$, we obtain the equality and the previous inequality. Consequently, $\widehat{\alpha}$ is an optimal control and $V(t,\mu_t)=Y(t,\mu_t)$. For any other optimal control $\widetilde{\alpha}$, we find
    \begin{align*}
        &V(t,\mu_t)=J(t,\mu_t,\widetilde{\alpha})
        \\
        &= \E \left[ \int_t^T h \left( r, X^{t,\mu_t,\tilde{\alpha}}_r, \mu^{\tilde{\alpha}}_r, Z \left(r,X^{t,\mu_t,\tilde{\alpha}}_r, \mu^{\tilde{\alpha}}_r \right),\tilde{\alpha}(r, X^{t,\mu_t,\tilde{\alpha}}_r) \right) - f \left( r, X^{t,\mu_t,\tilde{\alpha}}_r, \mu^{\tilde{\alpha}}_r, Z \left(r,X^{t,\mu_t,\tilde{\alpha}}_r, \mu^{\tilde{\alpha}}_r \right) \right) \mathrm{d}r + Y(t,\mu_t)  \right].
    \end{align*}
    Consequently,
    \begin{align*}
        \E \left[ \int_t^T h \left( r, X^{t,\mu_t,\tilde{\alpha}}_r, \mu^{\tilde{\alpha}}_r, Z \left(r,X^{t,\mu_t,\tilde{\alpha}}_r, \mu^{\tilde{\alpha}}_r \right),\tilde{\alpha}(r, X^{t,\mu_t,\tilde{\alpha}}_r) \right) - f \left( r, X^{t,\mu_t,\tilde{\alpha}}_r, \mu^{\tilde{\alpha}}_r, Z \left(r,X^{t,\mu_t,\tilde{\alpha}}_r, \mu^{\tilde{\alpha}}_r \right) \right) \mathrm{d}r  \right]=0.
    \end{align*}
    Since $f(t,x,m,z) \ge h(t,x,m,z,a)$ for any $(t,x,m,z,a)$, we have 
    $$
        h \left( r, X^{t,\mu_t,\tilde{\alpha}}_r, \mu^{\tilde{\alpha}}_r, Z \left(r,X^{t,\mu_t,\tilde{\alpha}}_r , \mu^{\tilde{\alpha}}_r\right),\tilde{\alpha}(r, X^{t,\mu_t,\tilde{\alpha}}_r) \right) - f \left( r, X^{t,\mu_t,\tilde{\alpha}}_r, \mu^{\tilde{\alpha}}_r, Z \left(r,X^{t,\mu_t,\tilde{\alpha}}_r, \mu^{\tilde{\alpha}}_r \right) \right) =0\;\mbox{a.e.}
    $$
    By uniqueness of $\widehat{a}$, $\widetilde{\alpha}(r, X^{t,\mu_t,\tilde{\alpha}}_r)=\widehat{a}\left(r,X^{t,\mu_t,\tilde{\alpha}}_r,\mu^{\tilde \alpha}_r, Z (r,X^{t,\mu_t,\tilde{\alpha}}_r,\mu^{\tilde \alpha}_r) \right)$ a.e. 
\end{proof}

\subsubsection{Proof of \Cref{prop:McK-fromOC}}

\begin{proof}
    Since $\widehat{\alpha}$ is optimal for $V$, we have $\widehat{Y}(t,\nu)=V(t,\nu)$ for any $(t,\nu)$. Let $\mu \in \Lc$ associated to $B$. Under the assumptions verified by $(\widehat{b},\widehat{L},\psi)$, we apply \Cref{prop:FK_measure_general} and find Borel bounded maps $\left(Z^\psi, Z^L \right):[0,T] \x \R \x \Pc(\R) \to \R$ s.t  
    \begin{align*}
        \mathrm{d} \widehat{Y} (t,\mu_t)
        =
        &-\E \left[ L \left(t, X^{t,\mu_t,\hat{\alpha}}_t, \mu^{\mu_t,\hat{\alpha}}_t, \widehat{\alpha}\left(t,X^{t,\mu_t,\hat{\alpha}}_t, \mu_t^{\mu_t, \hat \alpha}\right) \right) \right] \mathrm{d}t 
        \\
        &+   \int_t^T \E \left[ Z^L(s,X^{t,\mu_t, \hat \alpha}_s,\mu^{\mu_t,\hat \alpha}_s)  \left( B\left(t, X^{t,\mu_t,\hat{\alpha}}_t\right) - \widehat{b}\left(t, X^{t,\mu_t,\hat{\alpha}}_t,\mu_t \right) \right) \right] \mathrm{d}s\; \mathrm{d}t
        \\
        &+   \E \left[Z^\psi\left(t, X^{t,\mu_t,\hat{\alpha}}_t,\mu_t \right) \left( B\left(t, X^{t,\mu_t,\hat{\alpha}}_t\right) - \widehat{b}\left(t, X^{t,\mu_t,\hat{\alpha}}_t, \mu_t \right) \right) \right] \mathrm{d}t.
    \end{align*}

    We define
    \begin{align*}
        \widehat{Z}(t,x,\nu)
        &:=
        \int_t^T \E \left[ Z^L(s,X^{t,\nu, \hat \alpha}_s,\mu^{\nu,\hat \alpha}_s)  \bigg| X^{t,\nu,\hat{\alpha}}_t=x  \right] \mathrm{d}s + Z^{\psi}(t,x,\nu) . 
    \end{align*}
    Therefore,
    \begin{align*}
        \widehat{Y} (t,\mu_t)
        =
        \psi(\mu_T) + \E \left[\int_t^T L \left(s, X_s, \mu_s, \widehat{\alpha}(s,X_s,\mu_s) \right) + \widehat{Z}(s,X_s,\mu_s) \widehat{b}(s,X_s,\mu_s) \mathrm{d}s \right] - \E \left[ \int_t^T \widehat{Z}(s,X_s,\mu_s) B(s,X_s) \mathrm{d}s \right].
    \end{align*}
    To conclude, we need to show that $L \left(s, X_s, \mu_s, \widehat{\alpha}(s,X_s,\mu_s) \right) + \widehat{Z}(s,X_s,\mu_s) \widehat{b}(s,X_s,\mu_s)=f\left(s,X_s,\mu_s, \widehat{Z}(s,X_s,\mu_s) \right)$ $\mathrm{d}t \otimes\mathrm{d}\P$ a.e..
    Let $\mu \in \Lc$. We observe that for any $\alpha \in  \Ac$ and $t_0 \in [t,T]$,  we have
    \begin{align*}
        &
        \E \left[ \int_t^{t_0} L \left(s, X^{t,\mu_t,\alpha}_s, \mu^\alpha_s, \alpha(s,X^{t,\mu_t,\alpha}_s) \right) \mathrm{d}s + Y(t_0,\mu^\alpha_{t_0}) \right]
        \\
        &
        =\widehat{Y}(t,\mu_t) + \E \bigg[ \int_t^{t_0} h \left( s, X^{t,\mu_t,\alpha}_s, \mu^\alpha_s, \widehat{Z}\left(s,X^{t,\mu_t,\alpha}_s,\mu^\alpha_s\right), \alpha(s,X^{t,\mu_t,\alpha}_s) \right) 
        \\
        &~~~~~~~~~~~~~~~~~~~~~~~~~~~~~~- h \left( s, X^{t,\mu_t,\alpha}_s, \mu^\alpha_s, \widehat{Z}\left(s,X^{t,\mu_t,\alpha}_s,\mu^\alpha_s\right), \widehat{\alpha}(s,X^{t,\mu_t,\alpha}_s,\mu^\alpha_s) \right) \mathrm{d}s \bigg].
    \end{align*}
    By using the fact that $\widehat{Y}(t,\mu_t)$ is the value function combining with Dynamic Programming Principle (see \cite[Theorem 3.4.]{djete2019mckean}), this leads to: for any $(t,\alpha)$ and $t _0 \in [t,T]$, $c_{t}(t_0) \le 0$ where
    \begin{align*}
        &c_{t}(t_0):=\E \left[ \int_t^{t_0} L \left(s, X^{t,\mu_t,\alpha}_s, \mu^\alpha_s, \alpha(s,X^{t,\mu_t,\alpha}_s) \right) \mathrm{d}s + Y(t_0,\mu^\alpha_{t_0}) \right] - \widehat{Y}(t,\mu_t)
        \\
        &=\E \left[ \int_t^{t_0} h \left( s, X^{t,\mu_t,\alpha}_s, \mu^\alpha_s, \widehat{Z}\left(s,X^{t,\mu_t,\alpha}_s,\mu^\alpha_s\right), \alpha(s,X^{t,\mu_t,\alpha}_s) \right) - h \left( s, X^{t,\mu_t,\alpha}_s, \mu^\alpha_s, \widehat{Z}\left(s,X^{t,\mu_t,\alpha}_s,\mu^\alpha_s\right), \widehat{\alpha}(s,X^{t,\mu_t,\alpha}_s,\mu^\alpha_s) \right) \mathrm{d}s \right].
    \end{align*}
    Let $\varepsilon>0$, we have $\int_0^T c_t \left((t+\varepsilon) \wedge T \right) \mathrm{d}t \le 0.$
    By dividing the previous inequality by $\varepsilon$ and letting $\varepsilon \to 0$, we get that
    \begin{align*}
        &\int_0^T\E \left[  h \left( t, X^{t,\mu_t,\alpha}_t, \mu^\alpha_t, \widehat{Z}\left(t,X^{t,\mu_t,\alpha}_t,\mu^\alpha_t\right), \alpha(t,X^{t,\mu_t,\alpha}_t) \right) - h \left( t, X^{t,\mu_t,\alpha}_t, \mu^\alpha_t, \widehat{Z}\left(t,X^{t,\mu_t,\alpha}_t,\mu^\alpha_t\right), \widehat{\alpha}(t,X^{t,\mu_t,\alpha}_t,\mu^\alpha_t) \right) \right] \mathrm{d}t
        \\
        &=\int_0^T\int_\R  h \left( t, x, \mu_t, \widehat{Z}\left(t,x,\mu_t\right), \alpha(t,x) \right) - h \left( t, x, \mu_t, \widehat{Z}\left(t,x,\mu_t\right), \widehat{\alpha}(t,x,\mu_t) \right) \mu_t(\mathrm{d}x)\; \mathrm{d}t
        \le 0.
    \end{align*}
    By taking the supremum over $\alpha,$ we get that 
    \begin{align*}
        \int_0^T\int_\R  h \left( t, x, \mu_t, \widehat{Z}\left(t,x,\mu_t\right), \widehat{a}\left(t,x,\mu_t, \widehat{Z}\left(t,x,\mu_t\right)  \right) \right) - h \left( t, x, \mu_t, \widehat{Z}\left(t,x,\mu_t\right), \widehat{\alpha}(t,x,\mu_t) \right) \mu_t(\mathrm{d}x) \;\mathrm{d}t
        \le 0.
    \end{align*}

    By definition of $\widehat{a}$, we have $h(s,x,m,z,a) \le h(s,x,m,z,\widehat{a}(s,x,m,z))$. Therefore, by uniqueness of the maximizer of the Hamiltonian, we deduce that, a.e., $\widehat{\alpha}(t,x,\mu_t)=\widehat{a}\left(t,x,\mu_t, \widehat{Z} (t,x,\mu_t)\right)$ $\mu_t(\mathrm{d}x)\mathrm{d}t$--a.e.. We can conclude the result.
\end{proof}

\subsection{PDEs over the set of probability measures}

\subsubsection{Proof of \Cref{prop:PDE:fromBSDE}}

\begin{proof}
    We define the metric $\Wc_0(\nu , \nu') 
		~:=~
			\inf_{\lambda \in \Lambda(\nu, \nu')}  \int_{\R \x \R} |e-e'| \wedge1 ~\lambda( \mathrm{d}e, \mathrm{d}e')$ for any $(\nu,\nu') \in \Pc(\R) \x \Pc(\R)$ with $\Lambda(\mu, \mu')$ denotes the collection of all probability measures $\lambda$ on $\R \x \R$ 
	such that $\lambda( \mathrm{d}e, \R) = \mu$ and $\lambda(\R,  \mathrm{d}e') = \mu'( \mathrm{d}e')$. This metric induces the weak convergence topology. 

    \medskip
    We only show the sub--solution part, the super--solution property follows by identical argument. Let $\varphi: [0,T] \x \Pc(\R) \to \R$ be a smooth function s.t. the map $\widehat{u}-\varphi$ has a maximum at $(t_0,\nu_0)$. We assume that $\widehat{u}(t_0,\nu_0)=\varphi(t_0,\nu_0)$. We want to prove that
    \begin{align*}
        \partial_t \varphi(t_0,\nu_0) + H\left(t_0,\nu_0,\varphi,\partial_x \delta_m \varphi, \partial^2_x \delta_m \varphi \right) \ge 0.
    \end{align*}
    Let us assume that this is not true. By continuity, there exists $\delta >0$ and $\eta \in (0,T-t_0)$ s.t. $\widehat{u}(t,\nu) \le \varphi (t,\nu)$ and
     \begin{align} \label{eq:contradiction}
        \partial_t \varphi(t,\nu) + H\left(t,\nu,\varphi,\partial_x \delta_m \varphi, \partial^2_x \delta_m \varphi \right) \le -\delta
    \end{align}
    for all $t \in [t_0,t_0+\eta]$ and $\nu$ s.t. $\Wc_0(\nu_0,\nu) \le \eta$. Set $h(t,x,m):=-\partial_t \varphi(t,m)-\frac{1}{2} \partial_x^2 \delta_m \varphi(t,m)(x)$. We can see that $\left(\varphi, \partial_x \delta_m \varphi \right)$ is a W--BSDE solution associated to $\left( h,\varphi(t_0+\eta,\cdot)\right)$ when the initial time is $t_0$ and the maturity is $t_0+\eta$. 
    
\medskip    
    We recall that $(\mu_t)_{t \in [0,T]}$ belongs to  $\Lc^{t_0}$ if $\mu_s=\Lc(R_s)$ where $\mathrm{d}R_s=B(s,R_s)\mathrm{d}s + \mathrm{d}W_s$ for $s \in [t_0,T]$, and $\Lc(R_s)=\mu_{t_0}$ for $s \in [0,t_0]$. Also, $\left( Y^{t_0},Z^{t_0} \right)$ is a W--BSDE solution associated to $\left( f,\psi \right)$ with initial time $t_0$ and maturity $T$ and $\left( Y^{t_0 + \varepsilon},Z^{t_0 + \varepsilon} \right)$ is a W--BSDE solution associated to $\left( f,\psi \right)$ with initial time $t_0 + \varepsilon$ and maturity $T$. Therefore, on $[t_0+\varepsilon,T]$,  $\left( Y^{t_0},Z^{t_0} \right)$ and $\left( Y^{t_0 + \varepsilon},Z^{t_0 + \varepsilon} \right)$ satisfy the same type of {W--BSDE}, under the assumptions we assumed here, we can apply \Cref{cor:uniqueness} and find that $Y^{t_0}(t_0+\varepsilon,\mu_{t_0+\varepsilon})=Y^{t_0+\varepsilon}(t_0+\varepsilon,\mu_{t_0+\varepsilon})$ for any $\mu \in \Lc^{t_0}$ and any $\varepsilon>0$.

\medskip    
    Let $B:[0,T] \x \R \to \R$ be a bounded Borel map. We consider $(R_s)_{s \in [0,T]}$ satisfying: for $s \in [0,t_0]$ $\Lc(R_s)=\nu_0$ and for $s \in [t_0,T]$, $\mathrm{d}R_s= B(s,R_s)\mathrm{d}s + \mathrm{d}W_s$ with $\nu_s=\Lc(R_s)$. Let us observe that
    \begin{align} \label{eq:growth}
        \Wc_0(\nu_s,\nu_0) \le \E[|R_s-R_0|] \le (s-t_0)|B|_\infty+\sqrt{(s-t_0)}.
    \end{align}
    Consequently, for any $B$, there exists $s_0 \in [t_0,T]$ small enough s.t. for any $s \in [0,s_0]$, $\Wc_0(\nu_0,\nu_s) \le \eta$.
    

\medskip
    We have considered that $\widehat{u}(t,\nu) \le \varphi(t,\nu)$ i.e. $Y^{t}(t,\nu) \le \varphi(t,\nu)$ for $t \in [t_0,t_0+\eta]$ and $\Wc_0(\nu,\nu_0) \le  \eta$, and by \eqref{eq:contradiction},
    $$
        \int_\R f\left(t,x,\varphi(t,\nu), \partial_x \delta_m \varphi (t,\nu)(x) \right) \nu(\mathrm{d}x) \le \int_\R h(t,x,\nu) \nu(\mathrm{d}x) - \delta. 
    $$
     We know that $Y^{t_0}(t_0+\varepsilon,\mu_{t_0+\varepsilon})=Y^{t_0+\varepsilon}(t_0+\varepsilon,\nu_0) \le \varphi(t_0+\varepsilon,\nu_0)$ with $\varepsilon \le \eta$. By using the comparison principle of \Cref{prop:comparision} on $(Y^{t_0},Z^{t_0})$ and $(\varphi,\partial_x \delta_m \varphi)$ where we restrict ourselves over $\Lc^{t_0}$ and using the observation of \eqref{eq:growth} i.e. taking a small time to satisfy the condition $\Wc_0(\nu_0,\nu_s) \le \eta$, there exists $\varepsilon_0 >0$ small enough (depending on the Lipschitz property of $f$ see proof of \Cref{prop:comparision} ),  we get that 
    $$
        Y^{t_0}(t_0 +\varepsilon,\mu_{t_0+\varepsilon}) \le \varphi(t_0 +\varepsilon,\mu_{t_0+\varepsilon})-\delta
    $$
    for any $\varepsilon \in [0,\varepsilon_0 \wedge \eta]$ and $\mu \in \Lc^{t_0}$. In particular, $Y^{t_0}(t_0 ,\nu_{0}) \le \varphi(t_0,\nu_{0})-\delta$.  As $\delta>0$, this is a contradiction because we know that $\varphi(t_0,\nu_0)=Y^{t_0}(t_0,\nu_0)$.  We deduce that $Y$ is a sub--solution. The proof of super--solution is identical.
\end{proof}

\subsection{Comparison and uniqueness results}
\subsubsection{Proof of \Cref{prop:comparision}} \label{proof:prop_comparison}

\begin{proof}
    Let us observe that: for any $\mu \in \Lc^{t_1}$, for all $t \in [t_2,T]$,
    \begin{align*}
        Y^{t_2}(t,\mu_t) - Y^{t_2}(t,\mu_t) &= \psi^1(\mu_T) - \psi^2(\mu_T) 
        \\
        &~~~~+ \E \left[ \int_t^T f^1 \left( s, X_s,\mu_s, Y^{t_1}(s,\mu_s), Z^{t_1}(s,X_s,\mu_s) \right) - f^2 \left( s, X_s,\mu_s,Y^{t_2}(s,\mu_s), Z^{t_2}(s,X_s,\mu_s) \right) \mathrm{d}s \right] 
        \\
        &~~~~- \E \left[ \int_t^T \left(Z^{t_1}(s,X_s,\mu_s) - Z^{t_2}(s,X_s,\mu_s) \right) B(s,X_s) \mathrm{d}s \right].
    \end{align*}
    Let $\beta:[0,T] \x \R \to \R$ be a bounded Borel map. We set 
    \begin{align*}
        &F(t,x,m)
        \\
        &:= \frac{f^1\left( t,x,m,Y^{t_2}(t,m), Z^{t_1}(t,x,m) \right) - f^1\left( t,x,m,Y^{t_2}(t,m), Z^{t_2}(t,x,m) \right)}{Z^{t_1}(t,x,m)- Z^{t_2}(t,x,m)} \1_{Z^{t_2}(t,x,m) \neq Z^{t_1}(t,x,m)}+\beta(t,x).
    \end{align*}
    Notice that since $f^i$ is Lipschitz in $(y,z)$ uniformly in $(t,x,m)$, we can check that $F$ is bounded. Also, under the assumptions of the proposition, we have that $m \mapsto F(t,x,m)$ is continuous for each $(t,x)$. For each $t$, let us introduce $(S_s)_{s \in [0,T]}$ the process satisfying: $\Lc(S_s)=\mu_s$ for each $s \in [0,t_2]$, and for each $s \in [t_2,T]$,
    \begin{align*}
        \mathrm{d}S_s = F(s,S_s, \nu_s) \mathrm{d}s + \mathrm{d}W_s\;\mbox{with}\;\Lc(S_s)=\nu_s.
    \end{align*}
    Since $\mu \in \Lc^{t_1}$, similar to the proof of \Cref{prop:McK-fromBSDE}, we get $\left( \Lc(S_s) \right)_{s \in [0,T]} \in \Lc^{t_1}$. Therefore, for $t \in [t_2,T]$,
    \begin{align*}
        &Y^{t_1}(t,\nu_t) - Y^{t_2}(t,\nu_t) 
        \\&= \psi^1(\nu_T) - \psi^2(\nu_T)
        \\
        &~~~~+ \E \left[ \int_t^T f^1 \left( s, S_s,\nu_s,Y^{t_1}(s,\nu_s), Z^{t_1}(s,S_s,\nu_s) \right) - f^2 \left( s, S_s,\nu_s,Y^{t_2}(s,\nu_s), Z^{t_2}(s,S_s,\nu_s) \right) \mathrm{d}s \right] 
        \\
        &~~~~- \E \left[ \int_t^T \left(Z^{t_1}(s,S_s,\nu_s) - Z^{t_2}(s,S_s,\nu_s) \right) F(s,S_s,\nu_s) \mathrm{d}s \right]
        \\
        &=\psi^1(\nu_T) - \psi^2(\nu_T)
        \\
        &~~~~+ \E \left[ \int_t^T f^1 \left( s, S_s,\nu_s,Y^{t_1}(s,\nu_s), Z^{t_1}(s,S_s,\nu_s) \right) - f^2 \left( s, S_s,\nu_s,Y^{t_2}(s,\nu_s), Z^{t_2}(s,S_s,\nu_s) \right) \mathrm{d}s \right] 
        \\
        &~~~~- \E \left[ \int_t^T f^1\left(t,S_s,\nu_s,Y^{t_2}(s,\nu_s),Z^{t_1}(s,S_s,\nu_s) \right) - f^1\left(t,S_s,\nu_s,Y^{t_2}(s,\nu_s),Z^{t_2}(s,S_s,\nu_s) \right) \mathrm{d}s \right]
        \\
        &~~~~- \E \left[ \int_t^T \left(Z^{t_1}(s,S_s,\nu_s) - Z^{t_2}(s,S_s,\nu_s) \right) \beta(s,S_s) \mathrm{d}s \right]
        \\
        &=\psi^1(\nu_T) - \psi^2(\nu_T) 
        \\
        &~~~~+ \E \left[ \int_t^T f^1 \left( s, S_s,\nu_s,Y^{t_2}(s,\nu_s), Z^{t_2}(s,S_s,\nu_s) \right) - f^2 \left( s, S_s,\nu_s,Y^{t_2}(s,\nu_s), Z^{t_2}(s,S_s,\nu_s) \right) \mathrm{d}s \right]
        \\
        &~~~~+ \E \left[ \int_t^T \left(Y^{t_1}(s,\nu_s)-Y^{t_2}(s,\nu_s) \right) \frac{ f^1 \left( s, S_s,\nu_s,Y^{t_1}(s,\nu_s), Z^{t_1}(s,S_s,\nu_s) \right) - f^1 \left( s, S_s,\nu_s,Y^{t_2}(s,\nu_s), Z^{t_1}(s,S_s,\nu_s) \right)}{Y^{t_1}(s,\nu_s)-Y^{t_2}(s,\nu_s)} \mathrm{d}s \right]
        \\
        &~~~~- \E \left[ \int_t^T \left(Z^{t_1}(s,S_s,\nu_s) - Z^{t_2}(s,S_s,\nu_s) \right) \beta(s,S_s) \mathrm{d}s \right].
    \end{align*}
    We set $\ell_r:=\E \left[\int_t^r  \frac{ f^1 \left( s, S_s,\nu_s,Y^{t_1}(s,\nu_s), Z^{t_1}(s,S_s,\nu_s) \right) - f^1 \left( s, S_s,\nu_s,Y^{t_2}(s,\nu_s), Z^{t_1}(s,S_s,\nu_s) \right)}{Y^{t_1}(s,\nu_s)-Y^{t_2}(s,\nu_s)} \mathrm{d}s \right]$ for each $r \in [t,T]$. Since $f^1$ is Lipschitz in $(y,z)$ uniformly in $(t,m,x)$, $\ell_t$ is bounded uniformly in $t$. We define $D_s:=\left(Y^{t_1}(s,\nu_s)-Y^{t_2}(s,\nu_s) \right)e^{\ell_s}$. Then
    \begin{align*}
        \mathrm{d}D_s
        =
        &-e^{\ell_s} \E\left[ f^1 \left( s, S_s,\nu_s,Y^{t_2}(s,\nu_s), Z^{t_2}(s,S_s,\nu_s) \right) - f^2 \left( s, S_s,\nu_s,Y^{t_2}(s,\nu_s), Z^{t_2}(s,S_s,\nu_s) \right) \right] \mathrm{d}s
        \\
        &+e^{\ell_s}\E \left[\left(Z^{t_1}(s,S_s,\nu_s) - Z^{t_2}(s,S_s,\nu_s) \right) \beta(s,S_s)  \right]\mathrm{d}s.
    \end{align*}
    Consequently,
    \begin{align} \label{eq:comparison_Z}
        D_t=&e^{\ell_T} \left( \psi^1(\nu_T) - \psi^2(\nu_T) \right)-\E \left[\int_t^Te^{\ell_s}\left(Z^{t_1}(s,S_s,\nu_s) - Z^{t_2}(s,S_s,\nu_s) \right) \beta(s,S_s) \mathrm{d}s \right] \nonumber 
        \\
        &+  \int_t^T e^{\ell_s} \E \left[ f^1 \left( s, S_s,\nu_s,Y^{t_2}(s,\nu_s), Z^{t_2}(s,S_s,\nu_s) \right) - f^2 \left( s, S_s,\nu_s,Y^{t_2}(s,\nu_s), Z^{t_2}(s,S_s,\nu_s) \right) \right] \mathrm{d}s. 
    \end{align}
    Since $e^{\ell_s} \ge 0$, by using the inequalities 
    $$
        \int_\R f^1(t,x,\eta_t,Y^{t_2}(t,\eta_t),Z^{t_2}(t,x,\eta_t)) \eta_t(\mathrm{d}x) \le \int_\R f^{2}(t,x,\eta_t,Y^{t_2}(t,\eta_t),Z^{t_2}(t,x,\eta_t)) \eta_t(\mathrm{d}x)\;\mathrm{d}t\mbox{--a.e.}
    $$
     and $\psi^1(\eta_T) \le \psi^2(\eta_T)$ for any $\eta \in \Lc$, if $\beta=0$, we deduce that $Y^{t_1}(t,\mu_t) \le Y^{t_2}(t,\mu_t)$. This is true for any $t \in [t_2,T]$ and $\mu$. 

    \medskip
    
\end{proof}

\subsubsection{Proof of \Cref{cor:uniqueness}} \label{proof:corr_uniqueness}

\begin{proof}
    The equality regarding $Y^{t_1}$ and $Y^{t_2}$ is straightforward.  Let us now turn our attention to the equality involving $Z^{t_1}$ and $Z^{t_2}$. We keep the notations of the proof of \Cref{prop:comparision} above in \Cref{proof:prop_comparison}.  Using the condition $f^1(t,x,\eta_t,Y^{t_2}(t,\eta_t),Z^{2}(t,x,\eta_t)) = f^{t_2}(t,x,\eta_t,Y^{t_2}(t,\eta_t),Z^{t_2}(t,x,\eta_t))$ $\eta_t(\mathrm{d}x)\mathrm{d}t$--a.e. for any $\eta \in \Lc^{t_2}$, we have $F(t,x,\nu_t)=\beta(t,x)$ a.e. $(t,x)$ and $\ell_s=0$. Using the equation proved in \eqref{eq:comparison_Z} with any bounded Borel $\beta$ combined with and $\psi^1(\eta_T) = \psi^2(\eta_T)$ for any $\eta \in \Lc^{t_2}$,  we obtain, for any $t \in [t_2,T]$,
    \begin{align*}
        \E \left[\int_t^T\left(Z^{t_1}(s,S_s,\nu_s) - Z^{t_2}(s,S_s,\nu_s) \right) \beta(s,S_s) \mathrm{d}s \right]=0.
    \end{align*}+
    This leads to $\left(Z^{t_1}(s,S_s,\nu_s) - Z^{t_2}(s,S_s,\nu_s) \right) \beta(s,S_s)=0$ $\mathrm{d}s \otimes\mathrm{d}\P$--a.e. $(s,\om) \in [t_2,T] \x \Om$. Since $S$ is a non--degenerate diffusion, we deduce that $\left(Z^{t_1}(s,x,\nu_s) - Z^{t_2}(s,x,\nu_s) \right) \beta(s,x)=0$ $\mathrm{d}s \otimes\mathrm{d}x$--a.e. $(s,x) \in [t_2,T] \x \R$. Since $F(t,x,\nu_t)=\beta(t,x)$, $(\nu_t)_{t \in [0,T]}$ belongs to $\Lc^{t_1}$ and its associated to the bounded Borel drift $\beta$. This is enough to conclude the result.
\end{proof}

\subsection{Existence results}

\subsubsection{Proof of \Cref{prop:existY}}

\begin{proof}
    \textbf{Step 1 : Existence for small maturity}
    By \Cref{prop:existence:Y}, there exists small $T>0$ and a unique Borel map $Y:[0,T] \x \Pc(\R) \to \R$ s.t. for a random variable $U$ independent of $W$ with $\Lc(U)=m$,
    \begin{align*}
        Y(t,m)
        =
        \psi\left( \Lc(X^{t,U}_T) \right) + \int_t^T h \left( r, \Lc(X^{t,U}_r), Y\left(r, \Lc(X^{t,U}_r) \right)\; \right)\;\mathrm{d}r,
    \end{align*}
    where for $s \in [t,T]$, $\mathrm{d}X^{t,x}_s= {P}(s, X^{t,x}_s)\mathrm{d}s +\mathrm{d}W_s$ and for $s \in [0,t]$, $X^{t,x}_s=x$. In addition, $m \mapsto Y(t,m)$ admits a linear functional derivative $\delta_m Y(t,m)(x)$ s.t. $\sup_{(t,x,m)}|\delta_m Y(t,m)(x)| < \infty$. Now, by an application of \Cref{corollary:FK_measure_simplified}, we find, with $\mu=(\Lc(U_t))_{t \in [0,T]} \in \Lc$, for $r \ge t$,
    \begin{align*}
         &h \left(r, \Lc\left(X^{t,U_t}_r \right), Y\left( r, \Lc(X^{t,U_t}_r) \right) \right)
         \\&=h\left(r,\Lc(U_r), Y(r,\Lc(U_r)) \right) 
         + \E \left[ \int_t^r \delta_m \widehat{h}\left(r, \Lc\left( X^{s,U_s}_r \right)  \right) \left( X^{s,U_s}_r \right) \frac{\partial_xp(r,X^{s,U_s}_r,s,U_s)}{p(r,X^{s,U_s}_r,s,U_s)} \left( P(s,U_s) - B(s,U_s) \right) \mathrm{d}s \right]
    \end{align*}
    where $\widehat{h}(r,m):= h \left( r, m, Y\left(r, m \right) \right)$, and 
    $$
        \psi\left( \Lc(X^{t,U}_T) \right)=\psi(\Lc(U_T)) + \E \left[ \int_t^T \delta_m \psi\left(\Lc\left( X^{s,U_s}_T \right)  \right) \left( X^{s,U_s}_T \right) \frac{\partial_xp(T,X^{s,U_s}_T,s,U_s)}{p(T,X^{s,U_s}_T,s,U_s)} \left( P(s,U_s) - B(s,U_s) \right) \mathrm{d}s \right].
    $$
    With $\Lc(U)=m$, we set 
    \begin{align*}
        Z(t,x,m)
        :=
        ~&\E \left[ \int_t^T \delta_m \widehat{h}\left(r, \Lc\left( X^{t,U}_r \right)  \right) \left( X^{t,U}_r \right) \frac{\partial_xp(r,X^{t,U}_r,t,U_t)}{p(r,X^{t,U}_r,t,U)} \mathrm{d}r\bigg| U=x \right]
        \\
        &~~~~~~+ \E \left[ \delta_m \psi\left(\Lc\left( X^{t,U}_T \right)  \right) \left( X^{t,U}_T \right) \frac{\partial_xp(T,X^{t,U}_T,t,U)}{p(T,X^{t,U}_T,t,U)}\bigg| U=x\right]. 
    \end{align*}
    We can then check that $(Y,Z)$ is a {\rm W--BSDE} solution associated to $(f,\psi)$ (the integrability condition of $Z$ can be verified with \Cref{remark:integrability_Z}).

    \medskip
    \textbf{Step 2 : Existence for any maturity}. By the previous step, there exists small $T_\circ >0$ s.t. we can find $(Y,Z)$ a {\rm W--BSDE} solution associated to $(f,\psi)$ for maturity $T_\circ$. Let $T>0$.  we divide the interval $[0,T]$ into $n:=\left\lceil \frac{T}{T_\circ} \right\rceil$ small intervals $([t_i,t_{i+1}])_{i=0,\cdots,n-1}$. By the previous step, there exists $(Y^n,Z^n)$ a {\rm W--BSDE} solution associated to $(f,\psi)$ over $[t_{n-1},t_n]$. Since $m \mapsto Y^n(t,m)$ admits a linear functional derivative $\delta_m Y^n$ verifying $\sup_{(t,m,x)} |\delta_m Y^n(t,m)(x)| < \infty$, we can again apply the previous step and find $(Y^{n-1},Z^{n-1})$ associated to $(f,Y^n(t_{n-1},\cdot))$ over $[t_{n-2}, t_{n-1}]$. We repeat this procedure and construct a sequence $(Y^i,Z^i)_{1 \le i \le n}$ s.t. $(Y^i,Z^i)$ is a {\rm W--BSDE} solution associated to $(f,Y^{i+1}(t_i,\cdot))$ over $[t_{i-1},t_i]$. We set $Y(t,m):=\sum_{i=1}^n Y^i(t,m) \mathbf{1}_{t \in [t_{i-1},t_i]}$ and $Z(t,x,m):=\sum_{i=1}^n Z^i(t,x,m) \mathbf{1}_{t \in [t_{i-1},t_i]}$. We can check that $(Y,Z)$ is a {\rm W--BSDE} solution associated to $(f,\psi)$ over $[0,T]$.
\end{proof}

\subsubsection{Proof of \Cref{prop:existSquare}} \label{sec:proof:square}

\begin{proof}
    \textbf{Step 1 : the associated $n$--particle representation}
    Let $\mu \in \Lc$ associated to $B$. Let $(W^i)_{i \ge 1}$ be a sequence of independent Brownian motions and $(X^i)_{i \ge 1}$ be a sequence of i.i.d processes with $\Lc(X^i)=\Lc(X)$ where $X$ is a process s.t. $\mu_t=\Lc(X_t)$ for each $t \in [0,T]$. For each $n \ge1$, we consider the $\F$--adapted processes $(\overline{Y}^n,Z^1,\dots,Z^n)$ that satisfy (see for instance \cite{kobylanski2000backward}): a.e., for each $t \in [0,T]$,
    \begin{align*}
        \overline{Y}^n_t = n\psi(\mu^n_T) + \int_t^T \frac{1}{2} \sum_{i=1}^n \left| n Z^i_s \right|^2 \mathrm{d}s -\int_t^T \sum_{i=1}^n n Z^i_s \mathrm{d}W^i_s\;\;\mbox{with}\;\;\mu^n_t:=\frac{1}{n} \sum_{i=1}^n \delta_{W^i_t+X^i_0}.
    \end{align*}
    We set $P^n_t:=e^{\overline{Y}^n_t}$ and $Q^i_t:=n Z^i_t P^n_t $. By It\^o formula, the tuple $\left( P^n, Q^1,\dots,Q^n \right)$ satisfies $\mathrm{d}P^n_t=\sum_{i=1}^n Q^i_t \mathrm{d}W^i_t$  with $P^n_T=e^{n \psi (\mu^n_T)}.$ Therefore, we check that
    \begin{align*}
        P^n_t
        =
        \E \left[ e^{n \psi\left(\mu^n_T \right)} \Big| \Fc_t \right]\;\mbox{and}\;Q^i_t=\E \left[\partial_x \delta_m \psi \left(\mu^n_T \right)(X^i_0+W^i_T) e^{n \psi\left(\mu^n_T \right)} \Big| \Fc_t \right].
    \end{align*}
    This leads to 
    \begin{align*}
        nZ^i_t
        =
        \frac{\E \left[\partial_x \delta_m \psi \left(\mu^n_T \right)(X^i_0+W^i_T) e^{n \psi\left(\mu^n_T \right)} \Big| \Fc_t \right]}{\E \left[ e^{n \psi\left(\mu^n_T \right)} \Big| \Fc_t \right]}.
    \end{align*}

    \textbf{Step 2 : convergence of Gibbs measures} By the law of large number, there exists $A \in \Fc$ s.t. $\P(A)=1$ and for any $\om \in A$, $\lim_{n \to \infty}\frac{1}{n}\sum_{i=1}^n \delta_{W^i(\om)}=\Lc(W^1)$ and $\lim_{n \to \infty}\frac{1}{n}\sum_{i=1}^n \delta_{X^i(\om)}=\Lc(X^1)$. Let $\om \in A$, we introduce 
    \begin{align} \label{eq:gibbs_measure_n}
        \mu^{n,\om}_t(\mathrm{d}z^1,\cdots,\mathrm{d}z^n):=\frac{1}{Z^{n,\om}_t} e^{n \psi \left( \frac{1}{n} \sum_{j=1}^n \delta_{z^j + X^j_t(\om)} \right)} \Lc(W_{T-t})(\mathrm{d}z^1) \cdots \Lc(W_{T-t})(\mathrm{d}z^n)
    \end{align}
    $\;\mbox{with}\;Z^{n,\om}_t:=\int_{\R^n}  e^{n \psi \left( \frac{1}{n} \sum_{j=1}^n \delta_{z^j + X^j_t(\om)} \right)} \Lc(W_{T-t})(\mathrm{d}z^1) \cdots \Lc(W_{T-t})(\mathrm{d}z^n)$.

    \medskip
    Let $\phi:\R^2 \to \R$ be a Borel bounded map. By the law of large numbers, 
    \begin{align*}
        &\frac{1}{n} \ln \left( \E \left[ e^{\sum_{j=1}^n \phi\left( W^j_T-W^j_t, X^j_t(\om) \right)} \right] \right)=  \frac{1}{n} \ln \left( \prod_{j=1}^n \E \left[ e^{ \phi\left( W^j_T-W^j_t, X^j_t(\om) \right)} \right] \right) = \frac{1}{n} \ln \left( \prod_{j=1}^n \E \left[ e^{ \phi\left( W^1_T-W^1_t, X^j_t(\om) \right)} \right] \right)
        \\
        &= \frac{1}{n} \sum_{j=1}^n \ln \left( \E \left[ e^{ \phi\left( W^1_T-W^1_t, X^j_t(\om) \right)} \right] \right) \to_{n \to \infty} \int_\R \ln \left( \E \left[ e^{\phi\left( W^1_T-W^1_t, x \right)} \right] \right) \mu_t(\mathrm{d}x)=:\Lambda(\phi).
    \end{align*}
    We follow the reasoning of \cite[Proof of Theorem 6.2.10]{Dembo2010} which is based on the application of \cite[Corollary 4.6.11]{Dembo2010}. For any $q \ge 1$ and any bounded Borel maps $f^1,\cdots,f^q$, the map $(t_1,\cdots,t_q) \to \Lambda\left( \sum_{i=1}^q t_i f^i \right)$ essentially smooth, lower semicontinuous, and finite in some neighborhood of $0.$ We can apply \cite[Corollary 4.6.11]{Dembo2010} and deduce that $\left( \Lc\left( \frac{1}{n} \sum_{i=1}^n \delta_{\left( W^{i}_T-W^i_t, X^i_t(\om) \right)} \right) \right)_{n \ge 1}$ satisfies a large deviation principle with the convex, good rate function
    \begin{align} \label{eq:optim}
        \Lambda^*(m):=\sup_{\phi} \left[ \int_{\R^2} \phi(x,y) m(\mathrm{d}x,\mathrm{d}y) -\Lambda(\phi) \right]
    \end{align}
    where the supremum is over the set of bounded Borel maps. Let us provide a better form for $\Lambda^\star$. In fact, we can check that if $m \in \Pc(\R^2)$ is such that $m(\R, \mathrm{d}x) \neq \mu_t(\mathrm{d}x)$, by choosing a map independent of the first variable in the optimization \eqref{eq:optim}, we have $\Lambda^\star(m)=\infty$. When $m(\R, \mathrm{d}x) = \mu_t(\mathrm{d}x)$, with $x \mapsto m(x,\mathrm{d}y)$ the Borel map s.t. $m(\mathrm{d}y,\mathrm{d}x)=m(x,\mathrm{d}y)\mu_t(\mathrm{d}x)$, we get
    \begin{align*}
        &\Lambda^\star(m)= \sup_{\phi} \left[ \int_{\R^2} \phi(x,y) m(\mathrm{d}x,\mathrm{d}y) -\Lambda(\phi) \right] = \sup_{\phi} \left[ \int_{\R^2} \phi(x,y) m(y,\mathrm{d}x)\mu_t\mathrm{d}y) -\Lambda(\phi) \right]
        \\
        &=
        \int_\R\sup_{\phi} \left[ \int_{\R} \phi(x,y) m(y,\mathrm{d}x) - \ln \left( \E \left[ e^{\phi\left( W^1_T-W^1_t, y \right)} \right] \right)  \right] \mu_t(\mathrm{d}y).
    \end{align*}
    By playing over the set of optimization again, we can see that if $m(y,\mathrm{d}x)$ is not independent of $y$, we have $\Lambda^\star(m)=\infty$. Using \cite[Lemma 6.2.13]{Dembo2010} (Donsker--Varadhan variational formula), we finally obtain
    \begin{align*}
        \Lambda^\star(m)=D(m^1|\Lc(W_T-W_t))\;\mbox{if}\;m(\mathrm{d}x,\mathrm{d}y)=m^1(\mathrm{d}x)\mu_t(\mathrm{d}y)\;\mbox{and}\;\Lambda^\star(m)=\infty\;\mbox{otherwise}
    \end{align*}
    where we recall that
    \begin{align*}
        D(m|\mu):=\int_{\R \x \R} \frac{\mathrm{d}m}{\mathrm{d}\mu} \log \left( \frac{\mathrm{d}m}{\mathrm{d}\mu}  \right) \mathrm{d}\mu\mbox{ if }m \ll\mu,\;D(m|\mu)=\infty\mbox{ otherwise}.
    \end{align*}
    We define the map $\widehat{\psi}:\Pc(\R^2) \to \R$ by $\widehat{\psi}(m):=\psi (\Lc(X+Y))$ where $\Lc(X,Y)=m$. Since $\psi$ is continuous bounded, by Varadhan’s Integral Lemma (see \cite[Theorem 4.3.1]{Dembo2010}), we find that
    \begin{align} \label{eq:cong_Y}
         \lim_{n \to \infty} \frac{1}{n} \ln \left( \E \left[ e^{n \psi\left(\frac{1}{n} \sum_{j=1}^n \delta_{W^j_{T}-W^j_t+X^j_t(\om)} \right)}  \right]  \right)= \sup_{m \in \Pc(\R^2)}\widehat{\psi} (m)-\Lambda^\star(m)=\sup_{m \in \Pc(\R)} \psi(m*\mu_t) -D\left(m\;|\;\Lc(W^1_{T-t}) \right).
    \end{align}
    Setting $\R^n \ni (x^1,\cdots,x^n) \mapsto L^n(x^1,\cdots,x^n):=\frac{1}{n} \sum_{i=1}^n \delta_{x^i}$,  by the tilting property,  the sequence $\left( \mu^{n,\om}_t \circ (L^n)^{-1} \right)_{n \ge 1}$ satisfies a large deviation principle with the rate function  
    \begin{align*}
        F_t(m):=\Lambda^\star(m)-\widehat{\psi} (m)+\sup_{m' \in \Pc(\R^2)}\widehat{\psi} (m')-\Lambda^\star(m')=\Lambda^\star(m)-\widehat{\psi} (m)+\sup_{m \in \Pc(\R)} \psi(m*\mu_t) -D\left(m\;|\;\Lc(W^1_{T-t}) \right).
    \end{align*}
    If the map $m \mapsto \psi(m*\mu_t) -D\left(m\;|\;\Lc(W^1_{T-t})\right)$ admits a unique maximum $\nu_t$, then $\lim_{n \to \infty} \mu^{n,\om}_t \circ (L^n)^{-1}=\delta_{\nu_t}$.  Using \cite[Proposition 2.5.]{hu2020meanfieldlangevindynamicsenergy}, we know that $\nu_t$ exists and verifies
    \begin{align*}
        \nu_t(\mathrm{d}z)
        =
        \frac{\exp\left\{ \E \left[  \delta_m {\psi}(\nu_t*\mu_t)\left( z+X_t\right) \right] \right\} }{\int_{\R} \exp\left\{ \E \left[  \delta_m {\psi}(\nu_t*\mu_t)\left( z'+ X_t\right) \right]\right\} \Lc\left(W^1_{T-t}\right)(\mathrm{d}z') }\Lc\left(W^1_{T-t}\right)(\mathrm{d}z).
    \end{align*}

    \textbf{Step 3 : convergence of BSDEs through Gibbs measures} We recall that $\mu^{n,\om}_t$ is defined in \eqref{eq:gibbs_measure_n}. Set $\mub^{n,\om}_t:=\mu^{n,\om}_t 1_{\om \in {A}} + \beta_0 1_{\om \in {A}^c}$ for some $\beta_0 \in \Pc(\R)$. It is straightforward to see that $(t,\om) \mapsto \mub^{n,\om}_t$ is Borel measurable. We set $\mathrm{d}\P^n=L^n_T\mathrm{d}\P$ where $\mathrm{d}L^n_t=L^n_t \sum_{i=1}^n B(t,X^i_0+W^i_t) \mathrm{d}W^i_t$. Since $B$ is bounded, by Girsanov Theorem, we have that $\widetilde{W^i_\cdot}:=W^i_\cdot-\int_0^\cdot B(s,X^i_0+W^i_s) \mathrm{d}s$ is a $\P^n$--Brownian motion. It is straightforward to check $\Lc\left( X^1,\dots,X^n \right)=\Lc^{\P^n}\left(W^1,\dots,W^n \right)$. Let $\varphi:[0,T] \x \R \to \R$ be a bounded continuous map. By straightforward computations, we get
    \begin{align*}
        &\frac{1}{n} \sum_{i=1}^n \E^{\P^n} \left[ \varphi(t,X^i_t) n Z^i_t \right]
        \\
        &= \int_{\Om}\frac{1}{n} \sum_{i=1}^n \int_{\R^n} \partial_x \delta_m \psi\left( \frac{1}{n} \sum_{i=1}^n \delta_{z^j + X^j_t(\om)} \right) \left(z^i + X^i_t(\om) \right) \varphi(t,X^i_t(\om)) \mub_t^{n,\om} (\mathrm{d}z^1,\cdots,\mathrm{d}z^n) \P(\mathrm{d}\om).
    \end{align*}
    Therefore, since $(X^i)_{i \ge 1}$ is i.i.d. and $\left(\mu^{n,\om}_t \right)_{n \ge 1}$ satisfies a large deviation principle i.e. $\lim_{n \to \infty} \mu^{n,\om}_t \circ (L^n)=\delta_{\nu_t}$, 
    \begin{align*}
        \lim_{n \to \infty} \frac{1}{n} \sum_{i=1}^n \E^{\P^n} \left[ \varphi(t,X^i_t) n Z^i_t \right]
        = \int_\R \int_{\R} \partial_x \delta_m {\psi}\left( \nu_t*\mu_t \right) \left(z+x \right) \varphi(t,x) \nu_t(\mathrm{d}z) \mu_t(\mathrm{d}x).
    \end{align*}
    In addition, we also have
    \begin{align*}
        &\frac{1}{n} \sum_{i=1}^n \E^{\P^n} \left[ \left| n Z^i_t \right|^2 \right]
        \\
        &=\int_{\R^n} \int_{\R^n} \frac{1}{n} \sum_{i=1}^n \partial_x \delta_m \psi\left( \frac{1}{n} \sum_{i=1}^n \delta_{z^j + X^j_t(\om)} \right) \left(z^i + X^i_t(\om) \right)
        \\
        &~~~~~~~~~~~~~~~~~~~~~~~~\partial_x \delta_m \psi\left( \frac{1}{n} \sum_{i=1}^n \delta_{v^j + X^j_t(\om)} \right) \left(v^i + X^i_t(\om) \right) \mub_t^{n,\om} (\mathrm{d}z^1,\cdots,\mathrm{d}z^n) \mub_t^{n,\om} (\mathrm{d}v^1,\cdots,\mathrm{d}v^n)\P(\mathrm{d}\om),
    \end{align*}
    then
    \begin{align*}
        \lim_{n \to \infty} \frac{1}{n} \sum_{i=1}^n \E^{\P^n} \left[ \left| n Z^i_t \right|^2 \right]
        &=\int_{\R} \int_{\R^2}  \partial_x \delta_m {\psi}\left( \nu_t*\mu_t \right) \left(z + x \right)
        \partial_x \delta_m \psi\left( \nu_t*\mu_t \right) \left(v + x \right) \nu_t(\mathrm{d}z) \nu_t(\mathrm{d}v) \mu_t(\mathrm{d}x)
        \\
        &=\int_{\R} \left| \int_{\R}  \partial_x \delta_m {\psi}\left( \nu_t*\mu_t \right) \left(z + x \right)
        \nu_t(\mathrm{d}z) \right|^2 \mu_t(\mathrm{d}x).
    \end{align*}
    Setting $Y^n:=\frac{\overline{Y}^n}{n}$, we have that $({Y}^n,Z^1,\dots,Z^n)$  satisfies: a.e., for each $t \in [0,T]$,
    \begin{align*}
        {Y}^n_t = \psi(\mu^n_T) + \int_t^T \frac{1}{2} \frac{1}{n} \sum_{i=1}^n \left| n Z^i_s \right|^2 \mathrm{d}s -\int_t^T \frac{1}{n}\sum_{i=1}^n n Z^i_s \mathrm{d}W^i_s\;\;\mbox{with}\;\;\mu^n_t:=\frac{1}{n} \sum_{i=1}^n \delta_{W^i_t}.
    \end{align*}
    Consequently,
    \begin{align*}
        \lim_{n \to \infty} \E^{\P^n} [Y^n_t]
        &= \psi(\mu_T) + \frac{1}{2}\int_t^T \int_{\R} \left| \int_{\R}  \partial_x \delta_m {\psi}\left( \nu_s*\mu_s \right) \left(z + x \right)
        \nu_s(\mathrm{d}z) \right|^2 \mu_s(\mathrm{d}x) \mathrm{d}x \mathrm{d}s 
        \\
        &~~~~~~~~~~~~~~~~- \int_t^T \int_\R \int_{\R} \partial_x \delta_m {\psi}\left( \nu_s*\mu_s \right) \left(z+x \right) B(s,x) \nu_s(\mathrm{d}z) \mu_s(\mathrm{d}x) \mathrm{d}s.
    \end{align*}
    We also proved that (see \eqref{eq:cong_Y})
    \begin{align*}
         \lim_{n \to \infty}\E^{\P^n}[Y^n_t] =  \lim_{n \to \infty} \E \left[\frac{1}{n} \ln \left( \E \left[ e^{n \psi\left(\frac{1}{n} \sum_{j=1}^n \delta_{W^j_{T}-W^j_t+X^j_t} \right)} \bigg| \Fc_t \right]  \right) \right] =\sup_{m \in \Pc(\R)} \psi(m*\mu_t) -D\left(m\;|\;\Lc(W^1_{T-t}) \right).
    \end{align*}
    Let $\nu_t=\nu(t,\mu_t)$, by defining 
    $$
        Y(t,m):=\sup_{m' \in \Pc(\R)} \psi(m'*m) -D\left(m'\;|\;\Lc(W^1_{T-t}) \right)\;\mbox{and}\;Z(t,x,m):= \int_{\R}  \partial_x \delta_m {\psi}\left( \nu(t,m)*m \right) \left(z + x \right)
        \nu(t,m)(\mathrm{d}z),
    $$
    we find, for all $t \in [0,T]$,
        \begin{align*}
            Y(t,\mu_t)
            =
            \psi(\mu_T) + \frac{1}{2}\E \left[\int_t^T |Z(s,X_s,\mu_s)|^2 \mathrm{d}s \right] - \E \left[\int_t^T Z(s,X_s,\mu_s) B(s,X_s) \mathrm{d}s \right].
        \end{align*}
\end{proof}

\begin{appendix}
\section{Technical results}

\subsection{A version of the It\^o's formula }

In this section, we derive the dynamics of a functional that depends in a specific manner on a flow of probability measures associated with a diffusion process.

\medskip
Let \( F : \mathcal{P}(\mathbb{R}) \to \mathbb{R} \) be a bounded continuous functional that admits a bounded and continuous linear functional derivative \( \delta_m F \), such that the map \( (x, m) \mapsto \delta_m F(m)(x) \) is differentiable in \( x \), and the function \( (x, m) \mapsto \partial_x \delta_m F(m)(x) \) is continuous.
We consider a measure flow \( \bar{\mu} \in \mathcal{L} \) associated with a bounded Borel measurable function \(  [0,T] \times \mathbb{R} \times \mathcal{P}(\mathbb{R}) \ni (t,x,m) \mapsto \overline{B}(t,x,m) \in \mathbb{R} \) Lipschitz in $(x,m)$ uniformly in $t$. The function \( \overline{B} \) is also assumed to be differentiable in the spatial variable \( x \), with bounded derivative denoted by \( \overline{B}' \). Moreover, for each fixed \( t \in [0,T] \), the map \( (x,m) \mapsto \overline{B}'(t,x,m) \) is continuous. In addition, \( \overline{B} \) admits a linear functional derivative with respect to the measure variable, denoted by \( \delta_m \overline{B}(t,x,m)(y) \), which is bounded and differentiable in \( y \). The derivative \( \partial_y \delta_m \overline{B}(t,x,m)(y) \) is further assumed to be bounded and continuous in the variables \( (x,m,y) \) for each fixed \( t \in [0,T] \).


\medskip
Let $\mu=\Lc(U) \in \Lc$, for each $(t,x)$, we denote by $\left(X^{t,x}_s \right)_{s \in [0,T]}$ the process satisfying: for each $s \in [0,T]$, a.e.
\begin{align*}
    X^{t,x}_s= x+ \int_t^{s \vee t} \overline{B}\left(r, X^{t,x}_r,\nu_r \right) \mathrm{d}r + W_{s \vee t}-W_t\;\mbox{with}\;\nu_s=\Lc\left(X^{t,U_t}_s \right).
\end{align*}
The process $\left(X^{t,x}_s \right)_{s \in [0,T]}$ is well--defined by \cite[Theorem A.3]{djete2019mckean}.
We denote by $(\overline{X}^{t,x}_s )_{s \in [0,T]}$ and $( \overline{U}_s )_{s \in [0,T]}$ the processes that satisfy the same equations as $(X^{t,x}_s)_{s \in [0,T]}$ and $(U_s)_{s \in [0,T]}$, respectively, but are driven by a Brownian motion $\overline{W}$ independent of $W$. In addition, we introduce the processes $(J^{t,U_t}_s)_{s \in [0,T]}$ and $(\overline{J}^{t,U_t,\overline{U}_t}_s)_{s \in [0,T]}$ by: for each $s \in [0,T]$, $J^{t,U_t}_s=1$, $\overline{J}^{t,U_t,\overline{U}_t}_s=0$ and, for each $s \in [t,T]$, $\muh_s=\Lc\left(\overline{J}^{t,U_t,\overline{U}_t}_s,\overline{X}^{t,\overline{U}_t}_s\mid X^{t,U_t}_{s \wedge \cdot} \right)$, 
$$
    \mathrm{d}J^{t,U_t}_s=J^{t,U_t}_s \overline{B}' (s,X^{t,U_t}_s,\nu_s) \mathrm{d}s
$$
and
    \begin{align*}
        \;\mathrm{d}\overline{J}^{t,U_t,\overline{U}_t}_s=\overline{J}^{t,U_t,\overline{U}_t}_s \overline{B}'\left(s,\overline{X}^{t,\overline{U}_t}_s,\nu_s \right) \mathrm{d}s &+ \int_{\R^2} j \partial_y \delta_m \overline{B} (s, \overline{X}^{t,\overline{U}_t}_s,\nu_s)(x) \muh_s(\mathrm{d}j,\mathrm{d}x) \mathrm{d}s
        \\
        &+ J^{t,U_t}_s \partial_y \delta_m \overline{B}(s, \overline{X}^{t,\overline{U}_t}_s,\nu_s)(X^{t,U_t}_s)\mathrm{d}s.
    \end{align*}

\begin{proposition} \label{prop:FK_measure_general}
    For any $\mu \in \Lc$ associated to $B$, and $(U_s)_{s \in [0,T]}$ the process s.t. $\Lc(U_s)=\mu_s$ for any $ s \in [0,T]$. We have for each $t \in [0,T]$,
    \begin{align*}
         F \left( \Lc\left( X^{t,U_t}_T \right) \right)=F\left(\Lc(U_T) \right) &+ \E \left[ \int_t^T \delta_m F\left( \Lc\left( X^{r,U_r}_T \right)  \right) \left( X^{r,U_r}_T \right) \frac{\partial_xp(T,X^{r,U_r}_T,r,U_r)}{p(T,X^{r,U_r}_T,r,U_r)} \left( \overline{B} (r,U_r,\mu_r) - B(r,U_r) \right) \mathrm{d}r \right]
         \\
         &+ \E \left[ \int_t^T \overline{J}^{r,U_r,\overline{U}_r}_T \partial_x \delta_m F\left( \Lc\left( X^{r,U_r}_T \right)  \right) \left( \overline{X}^{r,\overline{U}_r}_T \right) \left( \overline{B} (r,U_r,\mu_r) - B(r,U_r) \right) \mathrm{d}r \right] .
    \end{align*}
    where $\Lc(X^{t,x}_s)(\mathrm{d}y)=p(s,y,t,x)\mathrm{d}y$ with a Borel map $p:[0,T] \x \R \x [0,T] \x \R \to \R_+$ satisfying $\int_\R p(s,y,t,x) \mathrm{d}y=1$ for each $(s,t,x)$.
\end{proposition}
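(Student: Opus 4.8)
The plan is to reduce the identity to the computation of a single time derivative. For $r\in[t,T]$ set $\Phi(r):=F\big(\Lc(X^{r,U_r}_T)\big)$, the value of $F$ at the terminal law of the $\overline{B}$--flow restarted at time $r$ from the law $\mu_r$ of $U_r$. Since the flow appearing on the left-hand side is exactly $X^{t,U_t}$ and $X^{T,U_T}_T=U_T$, the endpoints are $\Phi(t)=F(\Lc(X^{t,U_t}_T))$ and $\Phi(T)=F(\Lc(U_T))$, so the claim is equivalent to $F(\Lc(X^{t,U_t}_T))-F(\Lc(U_T))=-\int_t^T \Phi'(r)\,\mathrm{d}r$. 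The whole proof therefore splits into $(a)$ showing that $\Phi$ is $C^1$ and computing $\Phi'$, and $(b)$ recognizing the integrand of $\Phi'$ as the expression in the statement.

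For step $(a)$ I would introduce the decoupling field $v(r,m):=F(\Lc(R^{r,m}_T))$, where $R^{r,m}$ is the McKean--Vlasov flow started at time $r$ from the measure $m$ with drift $\overline{B}$, so that $\Phi(r)=v(r,\mu_r)$, $v(T,\cdot)=F$, and at $m=\mu_r$ the terminal law is $\Lc(X^{r,U_r}_T)$. Using the assumed regularity of $\overline{B}$ and of $F$ (boundedness, Lipschitz continuity, and the bounded continuous derivatives $\partial_x\delta_m F$ and $\partial_y\delta_m\overline{B}$), together with the semigroup property of the decoupled system, $v$ is differentiable in $r$ with a twice spatially differentiable linear functional derivative $\delta_m v$, and solves the backward transport equation
\begin{align*}
\partial_r v(r,m)+\int_\R\Big[\partial_x\delta_m v(r,m)(x)\,\overline{B}(r,x,m)+\tfrac12\partial^2_x\delta_m v(r,m)(x)\Big]m(\mathrm{d}x)=0 .
\end{align*}
Applying the It\^o formula along a flow of measures to $r\mapsto v(r,\mu_r)$ — legitimate here because all coefficients are bounded, so the usual $\Pc_2(\R)$ statement extends to $\Pc(\R)$ — and recalling that $\mu_r$ has generator $\partial_x\delta_m(\cdot)\,B+\tfrac12\partial^2_x\delta_m(\cdot)$, the second-order contributions coming from the flow and from the transport equation cancel, leaving
\begin{align*}
\Phi'(r)=\int_\R\partial_x\delta_m v(r,\mu_r)(x)\,\big(B(r,x)-\overline{B}(r,x,\mu_r)\big)\,\mu_r(\mathrm{d}x).
\end{align*}
Integrating over $r$ and using $\int_\R(\cdot)\,\mu_r(\mathrm{d}x)=\E[(\cdot)|_{x=U_r}]$ already produces the stated structure, the sign flip in $-\int_t^T\Phi'$ yielding the drift gap $\overline{B}(r,U_r,\mu_r)-B(r,U_r)$.

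It then remains, for step $(b)$, to represent $\partial_x\delta_m v(r,m)(x)$ through the flow. The linear functional derivative of the terminal-law map splits into a tagged-particle contribution and a mean-field interaction contribution: perturbing $m$ at a point $\xi$ first transports that point along the flow, giving $\E[\delta_m F(\Lc(R^{r,m}_T))(X^{r,\xi}_T)]$, and second perturbs the law $\nu^r=\Lc(R^{r,m}_{\cdot})$ entering the common drift, which propagates to time $T$ through the first-variation process of the McKean--Vlasov system. Differentiating the tagged part in $\xi$ by a Bismut--Elworthy--Li integration by parts — available since the noise is additive and non-degenerate, so the transition density $p>0$ is smooth — produces the weight $\tfrac{\partial_x p}{p}$ and the first term of the statement; differentiating the interaction part produces the independent copy $(\overline{X}^{r,\overline{U}_r},\overline{J}^{r,U_r,\overline{U}_r})$ paired with $\partial_x\delta_m F$, that is the second term, where $\overline{J}$ is exactly the linearized flow with the self-interaction correction built from $\partial_y\delta_m\overline{B}$ introduced before the statement. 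Substituting this representation into $\Phi'$ and evaluating at $x=U_r$ gives the claimed formula.

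The main obstacle is step $(b)$: rigorously establishing that $m\mapsto F(\Lc(R^{r,m}_T))$ is linearly differentiable in the measure with the stated representation, and in particular deriving the first-variation process $\overline{J}$ for the law-dependent drift, i.e.\ the coupling of the tagged trajectory $J^{r,U_r}$ back into the common drift through $\partial_y\delta_m\overline{B}$. This is the genuine McKean--Vlasov differential calculus; alongside it one must justify the Bismut--Elworthy--Li identity producing the $\tfrac{\partial_x p}{p}$ weight and control all interchanges of differentiation, expectation, and the $r$--integration, each of which relies on the uniform boundedness and continuity of the derivatives of $\overline{B}$ and $F$.
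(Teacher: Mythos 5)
Your plan is a genuinely different route from the paper's. You work analytically through the decoupling field $v(r,m):=F(\Lc(R^{r,m}_T))$, show it solves the linear master (backward transport) equation on the Wasserstein space, and apply the It\^o formula along the flow $(\mu_r)$ to obtain $\Phi'(r)=\int_\R \partial_x\delta_m v(r,\mu_r)(x)\left(B(r,x)-\overline{B}(r,x,\mu_r)\right)\mu_r(\mathrm{d}x)$; your sign bookkeeping and the cancellation of second--order terms are correct, and your identification of $\partial_x\delta_m v$ as tagged--particle term (with the weight $\partial_x p/p$) plus interaction term (with $\overline{J}$ paired with $\partial_x\delta_m F$ at an independent copy) matches the statement. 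The paper instead proceeds probabilistically: it introduces an $n$--particle system, applies Girsanov to relate the $B$--flow to the $\overline{B}$--flow, represents the martingale $M^n_t=\E[F(\mu^n_T)\mid \Fc_t]$ via Clark--Ocone through the first--variation processes $J^{j,t,\xbb,i}$, and then proves relative compactness and identifies the limit of the empirical laws of these first variations (propagation of chaos for the linearized system), finally converting the pathwise derivative $\E[J^{t,x}_T\,\partial_x\delta_m F(\cdot)(X^{t,x}_T)]$ into the density weight $\partial_x p/p$. So the two approaches meet exactly at the objects $(J,\overline{J},\partial_x p/p)$, but reach them from opposite directions.

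However, as written your proposal has a genuine gap, and you name it yourself: step $(b)$, the linear functional differentiability of $m\mapsto F(\Lc(R^{r,m}_T))$ with the two--term representation, and the derivation of $\overline{J}$, is asserted rather than proven --- and it is not routine. This is precisely the content that occupies Steps 1--2 of the paper's proof; note in particular that the correct linearized object is \emph{conditional}: in the paper's system, $\overline{J}^{t,U_t,\overline{U}_t}$ is driven by $\muh_s=\Lc\big(\overline{J}_s,\overline{X}_s\mid X^{t,U_t}_{s\wedge\cdot}\big)$, a conditional law given the tagged path, a self--consistency structure your sketch glosses over and which the paper extracts only after a nontrivial identification argument (its Step 2.1--2.2, showing $\muh$ is a measurable functional of the tagged path). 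A second, smaller gap sits in your step $(a)$: under the paper's hypotheses $F$ has no second--order derivatives ($\partial_x^2\delta_m F$ and $\delta_m^2 F$ are not assumed), so the classical solvability of the transport equation and the applicability of the It\^o formula must come from the smoothing of the non--degenerate noise; $\partial_x^2\delta_m v(r,m)(x)$ then carries a singularity of order $(T-r)^{-1/2}$ near the terminal time (one spatial derivative taken pathwise through $\partial_x\delta_m F$, the other through the density as in \Cref{remark:integrability_Z}), so the It\^o step needs a localization on $[t,T-\varepsilon]$ with a limit $\varepsilon\to0$, and the It\^o formula invoked in the paper's footnote assumes bounded derivatives, so this extension must itself be justified. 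None of these obstacles is fatal --- the analytic route can very likely be completed --- but in its current form the proposal defers essentially all of the proof's technical substance to them.
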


\begin{proof}
    The proof is divided into $3$ steps. We first assume that $\mu_0 \in \Pc_p(\R)$ for all $p>1$.

    \medskip
    \textbf{Step 1 : $n$--particle approximation} Let $\xbb=(x^1,\cdots,x^n) \in \R^n$, $\Xbb^{t,\xbb}:=\left(X^{1,t,\xbb}, \cdots, X^{n,t,\xbb} \right)$ be a  process s.t. $X^{i,t,\xbb}:=X^i$ with for each $s \in [0,t]$, $X^i_s=x^i$ and
    $$
        \mathrm{d}X^i_s=\overline{B}\left(s,X^i_s,\mu^n_s \right) \mathrm{d}s + \mathrm{d}W^i_s\;\mbox{with}\;\mu^n_s:=\frac{1}{n}\sum_{j=1}^n \delta_{X^j_s},\;\mbox{for}\;t \le s \le T
    $$
    where $(W^i)_{i \ge 1}$ is a sequence of independent Brownian motions. We also consider a sequence $(U^i)_{i \ge 1}$ satisfying: $U^i_0=X^{i}_0$ where $(X^i_0)_{i \ge 1}$ is an i.i.d sequence with distribution $\mu_0$ and, $\mathrm{d}U^i_t=B(t,U^i_t)\mathrm{d}t + \mathrm{d}W^i_t$. For each $n \ge 1$, we set 
    $$
        \mathrm{d}L^n_t=L^n_t \sum_{i=1}^n\left( B\left(t,X^{i,0, \Xbb^i_0}_t \right) - \overline{B}\left(t,X^{i,0, \boldsymbol{X}^i_0}_t, \nu^n_t \right) \right) \mathrm{d}W^i_t\;\mbox{with }\nu^n_t:=\frac{1}{n}\sum_{j=1}^n \delta_{X^{j,0,\Xbb^j_0}_t}
    $$
    and $\mathrm{d}\P^n:=L^n_T \mathrm{d}\P$. Since $\overline{B}$ and $B$ are bounded, by Girsanov's Theorem, we have $\Lc^{\P^n}\left( X^{1,0, \Xbb^1_0}, \cdots, X^{n,0, \Xbb^n_0} \right)=\Lc\left(U^1,\cdots,U^n \right)$. Since $F$ is bounded and continuous, it is simple to check that (see for instance \cite{lacker2018OnStrong} or \cite{djete2019general} for the propagation of chaos part) 
    \begin{align*}
        F \left( \Lc\left( X^{t,U_t}_T \right) \right) 
        =
        \lim_{n \to \infty} \E \left[ F \left( \frac{1}{n} \sum_{j=1}^n \delta_{X^{j,t, \Ubb_t}_T} \right) \right]
        =
        \lim_{n \to \infty} \E^{\P^n} \left[ \E \left[ F \left( \frac{1}{n} \sum_{j=1}^n \delta_{X^{j,0,\Xbb_0}_T} \right) \bigg| \Fc_t\right] \right]
    \end{align*}
    where $\Fc_t:=\sigma \left( W^i_{r}, X^i_0: r \le t, i \ge 1 \right)$.
    Notice that $(M^n_t)_{t \in [0,T]}:=\left( \E \left[ F \left( \frac{1}{n} \sum_{j=1}^n \delta_{X^{j,0,\Xbb_0}_T} \right) \bigg| \Fc_t\right] \right)_{t \in [0,T]}$ is a $\P$--martingale and 
    \begin{align*}
        \E \left[ F \left( \frac{1}{n} \sum_{j=1}^n \delta_{X^{j,0,\Xbb_0}_T} \right) \bigg| \Fc_t\right]
        =
        v^n\left(t,X^{1,0,\Xbb_0}_t,\cdots,X^{n,0,\Xbb_0}_t \right)\mbox{ with }v^n(t,\xbb):=\E \left[ F \left( \frac{1}{n} \sum_{j=1}^n \delta_{X^{j,t,\xbb}_T} \right)\right].
    \end{align*}
    Therefore,
    \begin{align*}
        \mathrm{d}M^n_t= \sum_{i=1}^n \partial_{x^i} v^n\left(t,X^{1,0,\Xbb_0}_t,\cdots,X^{n,0,\Xbb_0}_t \right) \mathrm{d}W^i_t.
    \end{align*}
    By the Clark--Ocone formula, see for instance \cite[Proposition 1.5]{nualart2006malliavin}, we have
    \begin{align*}
        \partial_{x^i} v^n \left(t,\xbb \right)= \sum_{j=1}^n\E \left[ J^{j,t,\xbb,i}_T \frac{1}{n}\partial_x \delta_mF \left( \frac{1}{n} \sum_{k=1}^n \delta_{X^{k,t,\xbb}_T} \right) \left( X^{j,t,\xbb}_T \right)\right]
    \end{align*}
    with $\nu^{n,t,\xbb}_s:=\frac{1}{n} \sum_{k=1}^n \delta_{X^{k,t,\xbb}_s}$, for each $s \in [0,t]$, $J^{j,t,\xbb,i}_s=\mathbf{1}_{i = j}$ and, for each $s \in [t,T]$,
    \begin{align*}
        \mathrm{d}J^{j,t,\xbb,i}_s=J^{j,t,\xbb,i}_s \overline{B}' \left(s, X^{j,t,\xbb}_s, \nu_s^{n,t,\xbb} \right) \mathrm{d}s + \sum_{k=1}^n J^{k,t,\xbb,i}_s \frac{1}{n} \partial_y \delta_m \overline{B} \left(s, X^{j,t,\xbb}_s, \nu_s^{n,t,\xbb} \right)(X^{k,t,\xbb}_s) \mathrm{d}s.
    \end{align*}
    As a consequence,
    \begin{align*}
         \mathrm{d}M^n_t= \frac{1}{n}\sum_{i=1}^n  \sum_{j=1}^n\E \left[ J^{j,t,\Xbb_t,i}_T \partial_x \delta_mF \left( \frac{1}{n} \sum_{k=1}^n \delta_{X^{k,t,\Xbb_t}_T} \right) \left( X^{j,t,\Xbb_t}_T \right)\bigg| \Fc_t\right]\mathrm{d}W^i_t.
    \end{align*}
    Let $t \in [0,T]$ be fixed. We introduce the sequence $\left( \Qr^n \right)_{n \ge 1} \subset \Pc \left( \Pc(\Cc^3 ) \x \Cc^3 \right)$ by
    \begin{align*}
        \Qr^n:=\frac{1}{n} \sum_{i=1}^n \P \circ  \left( \muh^{n,i}, J^{i,t,\Ubb_t,i}, X^{i,t,\Ubb_t}, U^i_{t \wedge \cdot} \right)^{-1}\;\mbox{where}\;\muh^{i,n}:=\frac{1}{n-1} \sum_{k \neq i} \delta_{\left( n J^{k,t,\Ubb_t,i}, X^{k,t,\Ubb_t}, U^j_{t \wedge \cdot} \right)}.
    \end{align*}
    \textbf{Step 2 : convergence and characterization of $(\Qr^n)_{n \ge 1}$ } Let $p>1$, by using the conditions over $\overline{B}$, there exists a constant $C>0$ (independent of $n$), s.t. for any $i,j$, any $t \in [0,T]$,
    \begin{align*}
        |J^{j,t,\Ubb_t,i}_t|^p \le C \left( \mathbf{1}_{i=j} + \int_0^t \sup_{r \le s}|J^{j,t,\Ubb_t,i}_r|^p \mathrm{d}s + \frac{1}{n} \sum_{k=1}^n\int_0^t  \sup_{r \le s}|J^{k,t,\Ubb_t,i}_r|^p \right).
    \end{align*}
    By using Gronwall's Lemma and the fact that $\mathbf{1}_{i=j}$ is only equal to $1$ when $i=j$ , we deduce that
    \begin{align*}
        \sup_{n \ge 1}\sup_{1 \le i \le n}\frac{1}{n} \sum_{j=1}^n \E \left[ \sup_{s \in [0,T]} |J^{j,t,\Ubb_t,i}_s|^p \left( n^p\mathbf{1}_{i \neq j} + \mathbf{1}_{i = j} \right) \right] <\infty,
    \end{align*}
    for any $p >1$. Consequently, the sequence $(\Qr^n)_{n \ge 1}$ is relatively compact in $\Wc_p$ for any $p>1$ (see for instance \cite[Proposition B.1]{Lacker_carmona_delarue_CN} . Let $\Qr=\P \circ (\muh,J,X,U)^{-1}$ be the limit of a convergent sub--sequence. Notice that
    \begin{align*}
        J^{i,t,\xbb,i}_t=1,\;\mathrm{d}J^{i,t,\xbb,i}_s=J^{i,t,\xbb,i}_s \overline{B}' \left(s, X^{i,t,\xbb}_s, \nu_s^{n,t,\xbb} \right) \mathrm{d}s &+ \frac{n-1}{n^2} \frac{1}{n-1} \sum_{k\neq i} nJ^{k,t,\xbb,i}_s  \partial_y \delta_m \overline{B} \left(s, X^{i,t,\xbb}_s, \nu_s^{n,t,\xbb} \right)(X^{k,t,\xbb}_s) \mathrm{d}s 
        \\
        &+ \frac{1}{n} J^{i,t,\xbb,i}_s  \partial_y \delta_m \overline{B} \left(s, X^{i,t,\xbb}_s, \nu_s^{n,t,\xbb} \right)(X^{i,t,\xbb}_s) \mathrm{d}s
    \end{align*}
    and for $i\neq j$, $nJ^{j,t,\xbb,i}_t=0$
    \begin{align*}
        \;\mathrm{d}nJ^{j,t,\xbb,i}_s=nJ^{j,t,\xbb,i}_s \overline{B}' \left(s, X^{j,t,\xbb}_s, \nu_s^{n,t,\xbb} \right) \mathrm{d}s &+ \frac{n(n-1)}{n^2} \frac{1}{n-1} \sum_{k\neq i} nJ^{k,t,\xbb,i}_s  \partial_y \delta_m \overline{B} \left(s, X^{j,t,\xbb}_s, \nu_s^{n,t,\xbb} \right)(X^{k,t,\xbb}_s) \mathrm{d}s 
        \\
        &+ J^{i,t,\xbb,i}_s  \partial_y \delta_m \overline{B} \left(s, X^{j,t,\xbb}_s, \nu_s^{n,t,\xbb} \right)(X^{i,t,\xbb}_s) \mathrm{d}s.
    \end{align*}
    By using classical weak convergence techniques for interacting particles (see \cite[Proposition 5.1.]{lacker2017limit} and \cite[Proposition 4.17.]{djete2019general}, and its associated proofs ), the process $(\mu,J,X,U)$ verifies: $\Lc(X,U_{t \wedge \cdot})=\Lc(X^{t,U_t},U_{t \wedge \cdot})$, $\P$ a.e. for each $s \in [0,t]$, $J_s=1$ and $s \in [t,T]$,
    \begin{align*}
        \mathrm{d}J_s=J_s \overline{B}' (s,X_s,\nu_s) \mathrm{d}s\;\mbox{with}\;\nu_s=\Lc(X^{t,U_t}_s),
    \end{align*} 
    and  $\P$--a.e. $\om \in \Om$, $\muh(\om)=\Lc^{\hat\mu(\om)}(\widehat{J},\widehat{X}, \widehat{U})$ where $(\widehat{J},\Xh,\widehat{U})$ are the canonical processes that satisfy: $\Lc^{\hat\mu(\om)}(\widehat{X}, \widehat{U})=\Lc(X^{t,U_t},U_{t \wedge \cdot})$,  for all $s\in [0,t]$, $\widehat{J}_s=0,$ and for $s \in [t,T]$,
    \begin{align} \label{eq:measure_derivative_process}
        \mathrm{d}\widehat{J}_s=\widehat{J}_s \overline{B}'\left(s,\Xh_s,\nu_s \right) \mathrm{d}s + \int_{\Cc \x \Cc}  j_s \partial_y \delta_m \overline{B} (s, \Xh_s,\nu_s)(x_s) \muh(\om)(\mathrm{d}j, \mathrm{d}x,\Cc)  \mathrm{d}s + J_s(\om) \partial_y \delta_m \overline{B}(s, \widehat{X}_s,\nu_s)(X_s(\om)) \mathrm{d}s
    \end{align}
    \textbf{~~~~Step 2.1 : analysis of Equation \eqref{eq:measure_derivative_process} }
    Let us observe that: for any $(j,x)=(j_s,x_s)_{s \in [0,T]}$, there exists a unique McKean--Vlasov process $(V,S)$ (see for instance \cite[Theorem A.3]{djete2019mckean}) satisfying: for each $s \in [0,t]$, $V_t=0$, $S_t=U_t$ and, for each $s \in [0,t]$,
    \begin{align*}
        \mathrm{d}V_s=V_s \overline{B}'\left(s,S_s,\nu_s \right) \mathrm{d}s + \int_{\Cc \x \Cc} v_s \partial_y \delta_m \overline{B} (s, S_s,\nu_s)(y_s) \eta(\mathrm{d}v, \mathrm{d}y) + j_s \partial_y \delta_m \overline{B}(s, S_s,\nu_s)(x_s),\;\eta:=\Lc(V,S)
    \end{align*}
    with $\mathrm{d}S_s=\overline{B}(s,S_s,\nu_s)\mathrm{d}s+ \mathrm{d}W_s,\;\nu_s=\Lc(S_s).$ 

    \medskip
    Notice that, by uniqueness, $\Lc(S)=\Lc(X^{t,U_t})$, then $\nu_s=\Lc(X^{t,U_t}_s)$. Therefore, since $(j_s)_{s \in [0,T]}$, $(x_s)_{s \in [0,T]}$, $(W^t_s:=W_s-W_t)_{s \in [0,T]}$ and $U_{t \wedge \cdot}$ are the only inputs of the equation, there exists a Borel map $\widetilde{\Gamma}:\Cc^4  \to \Cc$ s.t. $V=\widetilde{\Gamma} (j,x, W^t,U_{t \wedge \cdot})$ a.e. But $W^t$ is a map of $S$, we can rewrite $V=\Gamma (j,x, S,U_{t \wedge \cdot})$ for some appropriate Borel map $\Gamma$. 
    Since the pair of canonical processes $( \widehat{J}, \widehat{X})$ satisfies the same equation as $(V, S)$, it follows that, for $\P$--a.e.\ $\omega$, the identity $\widehat{J} = \Gamma \left( J(\omega), X(\omega), \widehat{X}, \widehat{U} \right)$ holds $\widehat{\mu}(\omega)$--a.e.

\medskip    
    It is straightforward to see that $J$ is a Borel map of $X$ i.e. $\left(J_s={\rm exp}\left(\int_t^s \overline{B}'(r,X_r,\nu_r)\mathrm{d}r \right) \right)_{s \in [0,T]}=:L(X)$, therefore 
    \begin{align} \label{eq:mu_x_relation}
        \muh(\om)&=\Lc^{\hat \mu (\om)}\left(\widehat{J}, \Xh, \widehat{U} \right) \nonumber
        \\
        &=\Lc^{\hat \mu (\om)}\left(\Gamma \left( J(\omega), X(\omega), \widehat{X}, \widehat{U} \right), \Xh, \widehat{U} \right) \nonumber
        \\
        &=\Lc\left(\Gamma \left( J(\omega), X(\omega), X^{t,U_t}, U \right), X^{t,U_t}, U \right)=\Lc\left(\Gamma \left( L(X(\omega)), X(\omega), X^{t,U_t}, U \right), X^{t,U_t}, U \right)=K(X(\om))
    \end{align}
    for some Borel map $K$. The maps $\Gamma$, $L$ and $K$ are independent of the measure $\Qr$, then $\Qr=\Lc^\P(\muh,J,X,U)= \Lc^\P(K(X),L(X),X,U)$ and $\Lc(X,U)=\Lc(X^{t,U_t},U)$. This is true for any sub--sequence of $(\Qr^n)_{n \ge 1}$, as a consequence the entire sequence converges to $\Lc(K(X^{t,U_t}),L(X^{t,U_t}),X^{t,U_t},U)$. 
    
\medskip
    \textbf{~~~~Step 2.2 : representation of the limit }  Set 
    $$
        \overline{\Qr}:= \E \left[ \Lc^{\hat \mu} \left( \widehat{J}, \widehat{X}, \widehat{U} \right)\delta_{\left(J,\;X,\;U \right)}\right] \in \Pc(\Cc^6).
    $$
    With the observation of \eqref{eq:mu_x_relation} i.e. $\muh$ is a map of $X$, by a straightforward manipulation, we get 
    $$
        \overline{\Qr}=\Lc\left(\overline{J}^{t,U_t,\overline{U}_t},\overline{X}^{t,\overline{U}_t},\overline{U}_{t \wedge \cdot},J^{t,U_t},X^{t,U_t},U_{t \wedge \cdot} \right)
    $$
    where: $\overline{X}^{t,\overline{U}_t}_t=\overline{U}_t$, ${X}^{t,{U}_t}_t$, $\nu_s=\Lc({X}^{t,{U}_t}_s)$,
    \begin{align*}
        \mathrm{d}\overline{X}^{t,\overline{U}_t}_s=\overline{B}\left(s, \overline{X}^{t,\overline{U}_t}_s, \nu_s  \right)\mathrm{d}s + \mathrm{d}\overline{W}_s,\;\mathrm{d}{X}^{t,{U}_t}_s=\overline{B}\left(s, {X}^{t,{U}_t}_s, \nu_s  \right)\mathrm{d}s + \mathrm{d}{W}_s,
    \end{align*}
    $J^{t,U_t}_t=1$, $\overline{J_t}^{t,U_t,\overline{U}_t}=0$, $\muh_s=\Lc\left(\overline{J}^{t,U_t,\overline{U}_t}_s,\overline{X}^{t,\overline{U}_t}_s\mid X^{t,U_t}_{s \wedge \cdot} \right)$, $\mathrm{d}J^{t,U_t}_s=J^{t,U_t}_s \overline{B}' (s,X^{t,U_t}_s,\nu_s) \mathrm{d}s$,
    \begin{align*}
        \;\mathrm{d}\overline{J}^{t,U_t,\overline{U}_t}_s=\overline{J}^{t,U_t,\overline{U}_t}_s \overline{B}'\left(s,\overline{X}^{t,\overline{U}_t}_s,\nu_s \right) \mathrm{d}s + \int_{\R \x \R} j \partial_y \delta_m \overline{B} (s, \overline{X}^{t,\overline{U}_t}_s,\nu_s)(x) \muh_s(\mathrm{d}j,\mathrm{d}x) + J^{t,U_t}_s \partial_y \delta_m \overline{B}(s, \overline{X}^{t,\overline{U}_t}_s,\nu_s)(X^{t,U_t}_s)
    \end{align*}
    with $(\overline{W},W)$ is an $\R^2$--Brownian motion and, $U$ and $\overline{U}$ are independent with the same law $\Lc(U)$.

    \medskip
    \textbf{Step 3 : verification of the limit} By using, what we proved in the previous step, we have
    \begin{align*}
        &\lim_{n \to \infty}\E^{\P^n} \left[M^n_s \right]
        =
        \lim_{n \to \infty} \E^{\P^n} \left[ M^n_T - \frac{1}{n}\sum_{i=1}^n \sum_{j=1}^n\int_s^T \E \left[ J^{j,t,\Xbb_t,i}_T \partial_x \delta_mF \left(\frac{1}{n} \sum_{k=1}^n \delta_{X^{k,t,\Xbb_t}_T} \right) \left( X^{j,t,\Xbb_t}_T \right) \bigg| \Fc_t\right] \mathrm{d}W^i_t\right]
        \\
        &=\lim_{n \to \infty}   \E \left[ F \left( \frac{1}{n} \sum_{j=1}^n \delta_{ U^j_T} \right) \right] 
        \\
        &~~~~~~-\lim_{n \to \infty}\frac{1}{n}\sum_{i=1}^n \sum_{j=1}^n\int_s^T \E \left[ J^{j,t,\Ubb_t,i}_T \partial_x \delta_mF \left( \frac{1}{n}\sum_{k=1}^n \delta_{X^{k,t,\Ubb_t}_T} \right) \left( X^{j,t,\Ubb_t}_T \right) \left( B(t,U^i_t)- \overline{B}(t, U^i_t,\mu^n_t) \right) \right] \mathrm{d}t  
        \\
        &=\lim_{n \to \infty}   \E \left[ F \left( \frac{1}{n} \sum_{j=1}^n \delta_{ U^j_T} \right) \right] -\lim_{n \to \infty}\frac{1}{n}\sum_{i=1}^n \int_s^T \E \left[ J^{i,t,\Ubb_t,i}_T \partial_x \delta_mF \left(\frac{1}{n} \sum_{k=1}^n \delta_{X^{k,t,\Ubb_t}_T} \right) \left( X^{i,t,\Ubb_t}_T \right) \left( B(t,U^i_t)- \overline{B}(t, U^i_t,\mu^n_t) \right) \right] \mathrm{d}t
        \\
        &~~~~~~-\lim_{n \to \infty}\frac{1}{n^2}\sum_{i=1}^n \sum_{j\neq i}\int_s^T \E \left[ n J^{j,t,\Ubb_t,i}_T \partial_x \delta_mF \left( \frac{1}{n} \sum_{k=1}^n \delta_{X^{k,t,\Ubb_t}_T} \right) \left( X^{j,t,\Ubb_t}_T \right) \left( B(t,U^i_t)- \overline{B}(t, U^i_t,\mu^n_t) \right) \right] \mathrm{d}t
        \\
        &=F\left(\Lc(U_T) \right) - \E \left[ \int_s^T J^{t,U_t}_T \partial_x \delta_m F\left( \Lc\left( X^{t,U_t}_T \right)  \right) \left( X^{t,U_t}_T \right) \left( {B} (t,U_t) - \overline{B}(t,U_t,\mu_t) \right) \mathrm{d}t \right]
        \\
        &~~~~~~~~~~~~~~~~~-\E \left[ \int_s^T \overline{J}^{t,U_t,\overline{U}_t}_T \partial_x \delta_m F\left( \Lc\left( X^{t,U_t}_T \right)  \right) \left( \overline{X}^{t,\overline{U}_t}_T \right) \left( {B} (t,U_t) - \overline{B}(t,U_t,\mu_t) \right) \mathrm{d}t \right].
    \end{align*}
    Let us define $\beta(t,x):=\E \left[ \delta_m F\left( \Lc\left( X^{t,U_t}_T \right)  \right) \left( X^{t,x}_T \right) \right].$ Notice that
    $$
        \partial_x\beta(t,x)=\E \left[ J^{t,x}_T \;\partial_x\delta_m F\left( \Lc\left( X^{t,U_t}_T \right)  \right) \left( X^{t,x}_T \right) \right].
    $$
    Under the assumptions (see \cite{matiichuk1965parabolic} or \cite{KUSUOKA2017359} for instance ), there exists a Borel map $p:[0,T] \x \R \x [0,T] \x \R \to \R_+$ s.t. $\int_\R p(s,y,t,x) \mathrm{d}y=1$ for each $(s,t,x)$, and $\Lc(X^{t,x}_s)(\mathrm{d}y)=p(s,y,t,x)\mathrm{d}y$. In addition $p$ is differentiable in $x$ with $\int_0^s\int_\R |\partial_x p(s,y,t,x)|\mathrm{d}y\;\mathrm{d}t<\infty$, and we have
    \begin{align*}
        \partial_x\beta(t,x)
        =
        \int_\R \delta_m F\left( \Lc\left( X^{t,U_t}_T \right)  \right) \left( y \right)\; \partial_xp(T,y,t,x) \mathrm{d}y
        =
        \E \left[ \delta_m F\left( \Lc\left( X^{t,U_t}_T \right)  \right) \left( X^{t,x}_T \right) \frac{\partial_xp(T,X^{t,x}_T,t,x)}{p(T,X^{t,x}_T,t,x)} \right].
    \end{align*}
    Consequently,
    \begin{align*}
         F \left( \Lc\left( X^{t,U_t}_T \right) \right)=F\left(\Lc(U_T) \right) &+ \E \left[ \int_t^T \delta_m F\left( \Lc\left( X^{r,U_r}_T \right)  \right) \left( X^{r,U_r}_T \right) \frac{\partial_xp(T,X^{r,U_r}_T,r,U_r)}{p(T,X^{r,U_r}_T,r,U_r)} \left( \overline{B} (r,U_r,\mu_r) - B(r,U_r) \right) \mathrm{d}r \right]
         \\
         &+ \E \left[ \int_t^T \overline{J}^{r,U_r,\overline{U}_r}_T \partial_x \delta_m F\left( \Lc\left( X^{r,U_r}_T \right)  \right) \left( \overline{X}^{r,\overline{U}_r}_T \right) \left( \overline{B} (r,U_r,\mu_r) - B(r,U_r) \right) \mathrm{d}r \right].
    \end{align*}
    Notice that since all the maps are assumed to be continuous bounded, we can check that the previous equality is also true if $\mu_0 \in \Pc(\R)$ (and not $\mu_0 \in \Pc_p(\R)$ for any $p$ ) by approximating any distribution in $\Pc(\R)$ by a sequence of probability with compact support (see also the techniques of \cite[Theorem A.2.]{djete2019general}).
\end{proof}

In the special case where the function $\overline{B} : [0,T] \times \mathbb{R} \times \mathcal{P}(\mathbb{R}) \to \mathbb{R}$ does not depend on the measure argument—i.e., when there exists a Borel measurable map $\widetilde{B} : [0,T] \times \mathbb{R} \to \mathbb{R}$ such that $\overline{B}(t,x,m) = \widetilde{B}(t,x)$ for all $(t,x,m)$—the formulation simplifies. For convenience, we retain the notation $\overline{B}$ in place of $\widetilde{B}$. In this context, we only require the functional $F : \mathcal{P}(\mathbb{R}) \to \mathbb{R}$ to admit a linear functional derivative with respect to the measure, with $(m,x) \mapsto \delta_m F(m)(x)$ being bounded and continuous in $m$, but not necessarily differentiable or continuous in the spatial variable $x$.


\begin{corollary} \label{corollary:FK_measure_simplified}
    For any $\mu \in \Lc$ associated to $B$, and $(U_s)_{s \in [0,T]}$ the process s.t. $\Lc(U_s)=\mu_s$. We have for each $t \in [0,T]$,
    \begin{align} \label{eq:FK_measure_simplified}
         F \left( \Lc\left( X^{t,U_t}_T \right) \right)=F\left(\Lc(U_T) \right) + \E \left[ \int_t^T \delta_m F\left( \Lc\left( X^{r,U_r}_T \right)  \right) \left( X^{r,U_r}_T \right) \frac{\partial_xp(T,X^{r,U_r}_T,r,U_r)}{p(T,X^{r,U_r}_T,r,U_r)} \left( \overline{B} (r,U_r) - B(r,U_r) \right) \mathrm{d}r \right].
    \end{align}
\end{corollary}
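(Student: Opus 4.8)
The plan is to specialize the proof of \Cref{prop:FK_measure_general} to the case of a measure-independent drift and to observe that the two places where the measure dependence of $\overline{B}$ was essential now trivialize. Since $\overline{B}(t,x,m)=\overline{B}(t,x)$ does not depend on $m$, its linear functional derivative vanishes, $\partial_y\delta_m\overline{B}\equiv 0$. This has two effects. On the one hand, the interacting particle system of Step~1 decouples into $n$ i.i.d.\ diffusions $\mathrm{d}X^i_s=\overline{B}(s,X^i_s)\,\mathrm{d}s+\mathrm{d}W^i_s$, so no genuine propagation-of-chaos coupling is involved. On the other hand, the interaction Jacobian $\overline{J}^{r,U_r,\overline{U}_r}$ (cf.\ \eqref{eq:measure_derivative_process}) now solves the homogeneous linear ODE $\mathrm{d}\overline{J}_s=\overline{J}_s\,\overline{B}'(s,\overline{X}^{r,\overline{U}_r}_s,\nu_s)\,\mathrm{d}s$ started from $\overline{J}_r=0$, whence $\overline{J}^{r,U_r,\overline{U}_r}\equiv 0$. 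Consequently the second line in the formula of \Cref{prop:FK_measure_general} disappears, and the first line is exactly the right-hand side of \eqref{eq:FK_measure_simplified}.

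The more delicate point is to justify this identity under the weakened hypothesis that $\delta_m F$ is only bounded and continuous in $m$, with no differentiability in the spatial variable. The key observation is that the surviving term does not genuinely require $\partial_x\delta_m F$. Indeed, setting $\beta(t,x):=\E\big[\delta_m F(\Lc(X^{t,U_t}_T))(X^{t,x}_T)\big]$, the inner functional derivative is evaluated at the fixed measure $\Lc(X^{t,U_t}_T)$, which does not depend on $x$; hence $\beta(t,x)=\int_\R \delta_m F(\Lc(X^{t,U_t}_T))(y)\,p(T,y,t,x)\,\mathrm{d}y$ is merely the integral of a fixed bounded function against the transition density. I would then differentiate under the integral sign, using the bound $\int_0^s\!\int_\R|\partial_x p(T,y,t,x)|\,\mathrm{d}y\,\mathrm{d}t<\infty$ established in the proof of \Cref{prop:FK_measure_general}, to obtain $\partial_x\beta(t,x)=\int_\R \delta_m F(\Lc(X^{t,U_t}_T))(y)\,\partial_x p(T,y,t,x)\,\mathrm{d}y=\E\big[\delta_m F(\Lc(X^{t,U_t}_T))(X^{t,x}_T)\,\tfrac{\partial_x p(T,X^{t,x}_T,t,x)}{p(T,X^{t,x}_T,t,x)}\big]$. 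This transfers the spatial derivative onto the density and makes the expression meaningful for $\delta_m F$ only bounded and continuous.

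To render this fully rigorous, I would approximate $F$ by a sequence $F^k$ whose derivatives $\delta_m F^k$ are smooth in $x$ (for instance by spatially mollifying $\delta_m F$ consistently with the linear-functional-derivative structure), apply the specialization above to each $F^k$ to obtain \eqref{eq:FK_measure_simplified} for $F^k$, and pass to the limit. Since $F^k\to F$ and $\delta_m F^k\to\delta_m F$ boundedly, and the weight $\partial_x p/p$ is integrable against the law of $X^{t,x}_T$ in the sense above, dominated convergence yields \eqref{eq:FK_measure_simplified} for $F$ itself.

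The main obstacle I anticipate is precisely this limiting step: one must construct the mollifications $F^k$ so that they simultaneously satisfy all the differentiability and continuity hypotheses needed to invoke \Cref{prop:FK_measure_general}, while retaining $\delta_m F^k\to\delta_m F$ in a mode strong enough to commute with the $\mathrm{d}r$–integration and with the possibly singular-at-small-time weight $\partial_x p/p$. Controlling this weight uniformly in $k$ through the estimate $\int_0^s\!\int_\R|\partial_x p|\,\mathrm{d}y\,\mathrm{d}t<\infty$ is the crux of the argument.
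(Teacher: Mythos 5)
Your proposal follows the paper's own proof essentially step for step: the paper likewise observes that without measure dependence the first--variation process $\overline{J}^{r,U_r,\overline{U}_r}$ solves the homogeneous linear ODE $\mathrm{d}\overline{J}_s=\overline{J}_s\,\overline{B}'(s,\overline{X}^{r,\overline{U}_r}_s,\nu_s)\,\mathrm{d}s$ from the zero initial condition and hence vanishes identically, reducing \eqref{eq:FK_measure_simplified} to the surviving term of \Cref{prop:FK_measure_general}, and it then removes the differentiability of $\delta_m F$ in $x$ exactly as you propose, by approximating $F$, applying the proposition to the approximation, and passing to the limit by dominated convergence against the weight $\partial_x p/p$, whose integrability is controlled through $\int_\R |\partial_x p(T,y,r,x)|\,\mathrm{d}y$ (cf.\ \Cref{remark:integrability_Z}). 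The one ingredient you leave abstract --- a mollification ``consistent with the linear--functional--derivative structure,'' which you correctly flag as the crux --- is realized concretely in the paper by smoothing the measure rather than the kernel, namely $F_\varepsilon(m):=F\left(\Lc(\varepsilon W_T+S)\right)$ with $\Lc(S)=m$, which automatically yields $\delta_m F_\varepsilon(m)(x)=\E\left[\delta_m F\left(\Lc(\varepsilon W_T+S)\right)(\varepsilon W_T+x)\right]$ and, by Gaussian integration by parts, the bounded continuous derivative $\partial_x\delta_m F_\varepsilon(m)(x)=\E\left[\delta_m F\left(\Lc(\varepsilon W_T+S)\right)(\varepsilon W_T+x)\,\frac{W_T}{\varepsilon T}\right]$, after which your limiting argument goes through as described.
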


\begin{proof}
    In that case, we can check that $\overline{J}^{t,U_t,\overline{U}_t}_s=0$ a.e. for all $s \in [0,T]$. Indeed, since there is no measure dependence, the equation satisfied by $\left(\overline{J}^{t,U_t,\overline{U}_t}_s \right)_{s \in [0,T]}$ is: for each $s \in [0,t]$, $\overline{J}^{t,U_t,\overline{U}_t}_s=0$ and for each $s \in [t,T],$  $\;\mathrm{d}\overline{J}^{t,U_t,\overline{U}_t}_s=\overline{J}^{t,U_t,\overline{U}_t}_s \overline{B}'\left(s,\overline{X}^{t,\overline{U}_t}_s,\nu_s \right) \mathrm{d}s$. The unique solution is $\overline{J}^{t,U_t,\overline{U}_t}_s=0$ a.e. for all $s \in [0,T]$. Consequently, \eqref{eq:FK_measure_simplified} is true with the assumption of \Cref{prop:FK_measure_general}. For each $\varepsilon >0$, we set $F_\varepsilon(m)=F(\Lc(\varepsilon W_T+S))$ where $S$ is a random variable independent of $W$ with $\Lc(S)=m$. By simple computations, we get $\delta_m F_\varepsilon(m)(x)=\E\left[\delta_m F(\Lc(\varepsilon W_T +S))(\varepsilon W_T + x)\right]$ and $\partial_x \delta_m F_\varepsilon(m)(x)=\E\left[\delta_m F(\Lc(\varepsilon W_T +S))(\varepsilon W_T + x)\frac{W_T}{\varepsilon T}\right]$. We verify that $\lim_{\varepsilon \to 0}F_\varepsilon(m)=F(m)$. Notice that, since $\partial_x p$ is integrable (see \Cref{remark:integrability_Z}), we get 
\begin{align*}
    &\sup_{\varepsilon > 0}\left| \int_\R\delta_m F_\varepsilon(m) \left(y \right) \partial_xp(T,y,r,x)\mathrm{d}y   - \int_\R\delta_m F_\varepsilon(m) \left(y \right) \partial_xp(T,y,r,x)\mathbf{1}_{|y| \le K}\mathrm{d}y  \right|
    \\
    &= \sup_{\varepsilon > 0}\left| \int_\R\delta_m F_\varepsilon(m) \left(y \right) \partial_xp(T,y,r,x)\mathbf{1}_{|y| > K}\mathrm{d}y  \right| \le \sup_{(x,m)} |\delta_m F(m)(x)|  \int_\R \left|\partial_xp(T,y,r,x)\right|\mathbf{1}_{|y| > K}\mathrm{d}y   \to_{K \to \infty}0.
\end{align*}
By dominated convergence Theorem, we obtain 
\begin{align*}
    &\lim_{K \to \infty} \lim_{\varepsilon \to 0} \int_\R\delta_m F_\varepsilon(m) \left(y \right) \partial_xp(T,y,r,x)\mathbf{1}_{|y| \le K}\mathrm{d}y
    \\
    &=\lim_{K \to \infty} \lim_{\varepsilon \to 0} \E \left[\int_\R\delta_m F(\Lc(\varepsilon W_T + S )) \left(y \right) \partial_xp(T,y-\varepsilon W_T,r,x)\mathbf{1}_{|y-\varepsilon W_T| \le K}\mathrm{d}y \right]= \int_\R\delta_m F(m) \left(y \right) \partial_xp(T,y,r,x)\mathrm{d}y.
\end{align*}
Then, $\lim_{\varepsilon \to 0} \int_\R\delta_m F_\varepsilon(m) \left(y \right) \partial_xp(T,y,r,x)\mathrm{d}y=\int_\R\delta_m F(m) \left(y \right) \partial_xp(T,y,r,x)\mathrm{d}y$. Since $\partial_x \delta_m F_\varepsilon$ is continuous and bounded, we can apply \eqref{eq:FK_measure_simplified} for $F_\varepsilon$, by letting $\varepsilon \to 0$, we deduce the result for $F$ .
\end{proof}

{\color{black}
\begin{remark} \label{remark:integrability_Z}

By {\rm \citeauthor*{matiichuk1965parabolic} \cite{matiichuk1965parabolic}} and {\rm \citeauthor*{porper1992properties} \cite{porper1992properties}} $($see also {\rm \citeauthor*{KUSUOKA2017359} \cite{KUSUOKA2017359}}$)$,  there exist constants \( C > 0 \) and \( \gamma > 0 \) $($depending on the Lipschitz continuity and boundedness of \( \overline{B} \)$)$ such that for \( t \ge r \) and any \( (x, y) \in \mathbb{R}^2 \), we have \( |\partial_x p(t, y, r, x)| \le \frac{C}{|t - r|} \exp\left( -\frac{\gamma |x - y|^2}{t - r} \right) \). As a result, one can verify that \( \mathbb{E} \left[ \int_0^T \int_{\mathbb{R}} |\partial_x p(t, y, r, U_r)| \, \mathrm{d}y \, \mathrm{d}t \right] < \infty \). We deduce that \( \mathbb{E} \left[ \int_0^T |Z(t, U_t, \mu_t)| \, \mathrm{d}t \right] < \infty \) where the process $Z$ is \( Z(r, x, \mu_r) := \mathbb{E} \left[ \delta_m F\left( \mathcal{L}(X_T^{r, U_r}) \right)\left(X_T^{r, U_r} \right) \frac{ \partial_x p(T, X_T^{r, U_r}, r, U_r) }{ p(T, X_T^{r, U_r}, r, U_r) } \,\Big|\, U_r = x \right] \).
\end{remark}
}

\subsection{An existence result}

We provide an existence result for an equation in which the unknown is a functional over the Wasserstein space of probability measures.

\medskip
Let $\overline{f} : [0,T] \times \Pc(\R) \times \R \to \R$ and $\overline{P} : [0,T] \times \R \to \R$ be two bounded Borel measurable functions. We assume that the function $\overline{f}$ is differentiable with respect to the variable $y$, with derivative denoted by $\overline{f}'$, and admits a linear functional derivative with respect to the measure argument, denoted by $\delta_m \overline{f}$. Furthermore, the map 
\[
(t, m, y, z) \mapsto \left( \overline{f}(t, m, y), \overline{f}'(t, m, y), \delta_m \overline{f}(t, m, y)(z) \right)
\]
is bounded and Lipschitz continuous in the variable $y$, uniformly in $(t, m, z)$. We denote by ${\rm Lip}(h)$ the Lipschitz constant of a function $h \in \{ \overline{f}, \overline{f}', \delta_m \overline{f} \}$. The notation $|h|_\infty$ will be used to refer to the uniform norm of $h$, regardless of the underlying domain.
Furthermore, we consider a continuous bounded function $\psi : \Pc(\R) \to \R$ which admits a bounded linear functional derivative $\delta_m \psi$.


\medskip
Given $(t,x) \in [0,T] \x \R$, we denote by $(X^{t,x}_s)_{s \in [0,T]}$ the process verifying: for each $s \in [0,t]$, $X^{t,x}_s=x$ and, for any $s \in [t,T]$,
\begin{align*}
    \mathrm{d}X^{t,x}_s= \overline{P}(s, X^{t,x}_s)\mathrm{d}s +\mathrm{d}W_s
\end{align*}
where $W$ is an $\R$--valued $\F$--Brownian motion.
For any $m \in \Pc(\R)$, we denote by $U$ the random variable independent of $W$ s.t. $\Lc(U)=m$. 
\begin{proposition} \label{prop:existence:Y}
    There exists a continuous bounded map $Y:[0,T] \x \Pc(\R) \to \R$ verifying: for any $(t,m),$
    \begin{align*}
        Y(t,m)
        =
        \psi\left( \Lc(X^{t,U}_T) \right) + \int_t^T \overline{f} \left( r, \Lc(X^{t,U}_r), Y\left(r, \Lc(X^{t,U}_r) \right)\; \right)\;\mathrm{d}r.
    \end{align*}
    In addition, if $T$ is small enough i.e.
    \begin{align*}
        T(C+|\overline{f}'|_\infty)<1\;\mbox{and}\;T\;{\rm Lip}(\delta_m \overline{f}) + T\;L_T {\rm Lip}(\overline{f}') + T\;|\overline{f}'|_\infty + T\;{\rm Lip}(\overline{f})< 1
    \end{align*}
    where $\frac{|\psi|_\infty+TC}{1-TC} + \frac{||\delta_m \psi|_\infty| + T |\delta_m \overline{f}|_\infty}{1- T |\overline{f}'|_\infty } = L_T$ then    for each $t$, the map $m \mapsto Y(t,m)$ admits a linear functional derivative $\delta_m Y$ verifying $\sup_{(t,x,m)}|\delta_m Y(t,m)(x)|< \infty $.
\end{proposition}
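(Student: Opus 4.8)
The plan is to read the equation as a deterministic fixed point over the Wasserstein space driven by the (measure-independent) flow of the diffusion $X^{t,\cdot}$. For $t\le r$ set $\Phi_{t,r}(m):=\Lc(X^{t,U}_r)$ with $\Lc(U)=m$; since $\overline{P}$ does not depend on the measure, $\Phi_{t,r}(m)=mK_{t,r}$ is \emph{linear} in $m$, where $K_{t,r}(x,\cdot):=\Lc(X^{t,x}_r)$ is the transition kernel, and the flow property $\Phi_{r,s}\circ\Phi_{t,r}=\Phi_{t,s}$ holds. Writing the right-hand side as an operator
\begin{align*}
(\Psi Y)(t,m):=\psi\bigl(\Phi_{t,T}(m)\bigr)+\int_t^T \overline{f}\bigl(r,\Phi_{t,r}(m),Y(r,\Phi_{t,r}(m))\bigr)\,\mathrm{d}r,
\end{align*}
I would first check that $\Psi$ maps bounded continuous maps on $[0,T]\x\Pc(\R)$ into themselves: boundedness is immediate from $|\psi|_\infty$ and $|\overline f|_\infty$, while continuity follows from the joint continuity of $(t,m)\mapsto\Phi_{t,r}(m)$ in the weak topology together with the continuity of $\psi$, $\overline f$ and $Y$. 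The continuity of the flow, despite $\overline P$ being merely bounded Borel, comes from non-degeneracy: by Girsanov $\Lc(X^{t,x}_r)$ admits a transition density with the Gaussian bounds recalled in \Cref{remark:integrability_Z}, giving continuity in $(t,x)$ and hence in $m$. A contraction then yields existence: on $C_b$ equipped with $\|Y\|_\beta:=\sup_{t,m}e^{\beta t}|Y(t,m)|$ one has $\|\Psi Y_1-\Psi Y_2\|_\beta\le (\mathrm{Lip}(\overline f)/\beta)\|Y_1-Y_2\|_\beta$, so $\Psi$ is a contraction for $\beta>\mathrm{Lip}(\overline f)$, producing the unique bounded continuous fixed point $Y$ with $|Y|_\infty\le|\psi|_\infty+T|\overline f|_\infty$, for arbitrary $T$.

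\textbf{The candidate derivative.} For the second assertion I would differentiate the equation in $m$. The linearity of $\Phi_{t,r}$ gives a clean chain rule: for any $G$ with bounded linear functional derivative,
\begin{align*}
\delta_m\bigl[G\circ\Phi_{t,r}\bigr](m)(x)=\int_\R \delta_m G(\Phi_{t,r}(m))(y)\,K_{t,r}(x,\mathrm{d}y)=\E\bigl[\delta_m G(\Phi_{t,r}(m))(X^{t,x}_r)\bigr].
\end{align*}
Combined with the product rule $\delta_m[\overline f(r,\cdot,Y(r,\cdot))](m)(y)=\delta_m\overline f(r,m,Y(r,m))(y)+\overline f'(r,m,Y(r,m))\,\delta_m Y(r,m)(y)$, this suggests that $V(t,m)(x):=\delta_m Y(t,m)(x)$ must solve the \emph{linear} equation
\begin{align*}
V(t,m)(x)=\E\bigl[\delta_m\psi(\Phi_{t,T}(m))(X^{t,x}_T)\bigr]+\int_t^T\E\Bigl[\delta_m\overline f\bigl(r,\Phi_{t,r}(m),Y(r,\Phi_{t,r}(m))\bigr)(X^{t,x}_r)+\overline f'\bigl(r,\Phi_{t,r}(m),Y(r,\Phi_{t,r}(m))\bigr)V(r,\Phi_{t,r}(m))(X^{t,x}_r)\Bigr]\,\mathrm{d}r.
\end{align*}
Read as a fixed point in the bounded measurable functions, this map is a contraction in the supremum norm with constant $T|\overline f'|_\infty$; hence for $T|\overline f'|_\infty<1$ it admits a unique bounded solution $V$ with $\sup|V|\le(|\delta_m\psi|_\infty+T|\delta_m\overline f|_\infty)/(1-T|\overline f'|_\infty)$, precisely the second summand in $L_T$.

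\textbf{Verification that $V=\delta_m Y$ and the main obstacle.} The delicate point is that the equation for $V$ involves $\delta_m Y$ through $\overline f'\,\delta_m Y$, so one cannot differentiate $Y=\Psi Y$ term by term without already knowing $Y$ is differentiable. I would resolve this by a Picard scheme $Y^{(k+1)}=\Psi Y^{(k)}$ started from $Y^{(0)}\equiv 0$, proving by induction that each $Y^{(k)}$ admits a bounded linear functional derivative $V^{(k)}$ given by the linear recursion above with $Y$ replaced by $Y^{(k-1)}$; the induction step uses the chain and product rules just stated, while the first smallness condition $T(C+|\overline f'|_\infty)<1$ (with $C=\mathrm{Lip}(\overline f)$) keeps the denominators in $L_T$ positive, so that $\sup_k(|Y^{(k)}|_\infty+\sup|V^{(k)}|)\le L_T$. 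The crux is the uniform convergence of $(V^{(k)})$: with $a_k:=\sup|V^{(k)}-V^{(k-1)}|$ and $b_k:=|Y^{(k)}-Y^{(k-1)}|_\infty$ the estimates give
\begin{align*}
a_{k+1}\le \bigl(T\,\mathrm{Lip}(\delta_m\overline f)+T\,L_T\,\mathrm{Lip}(\overline f')\bigr)b_k+T|\overline f'|_\infty\,a_k,\qquad b_{k}\le\bigl(T\,\mathrm{Lip}(\overline f)\bigr)^{k}b_0,
\end{align*}
so the second smallness hypothesis, namely $T\,\mathrm{Lip}(\delta_m\overline f)+T\,L_T\,\mathrm{Lip}(\overline f')+T|\overline f'|_\infty+T\,\mathrm{Lip}(\overline f)<1$, forces both sequences to be summable and $(V^{(k)})$ to be Cauchy. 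Passing to the limit, $Y^{(k)}\to Y$ and $V^{(k)}\to V$ uniformly, and since each identity $Y^{(k)}(t,m)-Y^{(k)}(t,m')=\int_0^1\int_\R V^{(k)}(t,m_\lambda)(x)(m-m')(\mathrm{d}x)\,\mathrm{d}\lambda$ (with $m_\lambda:=\lambda m+(1-\lambda)m'$) is stable under uniform limits, I conclude $V=\delta_m Y$ and $\sup_{t,x,m}|\delta_m Y(t,m)(x)|\le L_T<\infty$. I expect the main difficulty to lie exactly in closing this derivative iteration---controlling $V^{(k)}$ uniformly in $k$ and establishing the Cauchy property---since the coupling $\overline f'\,\delta_m Y$ mixes $Y$ and its derivative; the elaborate smallness hypothesis is tailored precisely to make the associated two-variable Gronwall/contraction estimate close.
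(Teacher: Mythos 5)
Your proposal is correct and follows essentially the same route as the paper: the same fixed-point operator, the same formula for $\delta_m \Gamma_y$ obtained from the linearity of $m \mapsto \Lc(X^{t,U}_r)$ (the paper writes it via the kernel of $X^{t,x}$ exactly as you do), and the same constants and smallness conditions producing the bound $L_T$. The only cosmetic differences are that you use a sup-in-time weighted norm instead of the paper's integral norm $\int_0^T e^{\alpha t}\sup_m|y(t,m)|\,\mathrm{d}t$ for the existence part, and that you unwind the paper's one-shot Banach contraction on $\Sc:=\left\{y:\;|y|_\infty+|\delta_m y|_\infty\le L_T\right\}$ into an explicit Picard iteration, your two-sequence estimate for $(a_k,b_k)$ being precisely the paper's contraction estimate for the combined norm $|y|_\infty+|\delta_m y|_\infty$ written coordinate-wise.
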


\begin{proof}
    Let $\alpha >0$. Let us start by introducing the set $\Rc_\alpha$. We will say that a Borel map $y:[0,T] \x \Pc(\R) \to \R$ belongs to $\Rc_\alpha$ if $\|y\|_\alpha:=\int_0^T e^{\alpha t} \sup_m|y(t,m)|\mathrm{d}t < \infty$. We introduce the map $\Gamma:\Rc_\alpha \to\Rc_\alpha$ by
    \begin{align*}
        \Gamma_y(t,m):= \psi\left( \Lc(X^{t,U}_T) \right) + \int_t^T \overline{f} \left( r, \Lc(X^{t,U}_r), y\left(r, \Lc(X^{t,U}_r) \right)\; \right)\;\mathrm{d}r.
    \end{align*}
    Notice that, since $\psi$ and $\overline{f}$ are bounded, $\Gamma$ is well--defined i.e. for any $y \in \Rc_\alpha$, $\Gamma_y \in \Rc_\alpha$. There exists $C>0$ (independent of $\alpha$ ), for any $(y,y')$,
    \begin{align*}
        &\left|\Gamma_y-\Gamma_{y'} \right|(t,m)
        =\left|\int_t^T \overline{f} \left( r, \Lc(X^{t,U}_r), y\left(r, \Lc(X^{t,U}_r) \right)\; \right)-\overline{f} \left( r, \Lc(X^{t,U}_r), y'\left(r, \Lc(X^{t,U}_r) \right)\; \right)\;\mathrm{d}r \right|
        \\
        &\le C \int_t^T \left| y\left(r, \Lc(X^{t,U}_r) \right)- y'\left(r, \Lc(X^{t,U}_r)\; \right)\right|\;\mathrm{d}r\le  C \int_t^T \sup_m \left| y\left(r, m \right)- y'\left(r, m \right)\right|\;\mathrm{d}r.
    \end{align*}
    Then, we obtain
    \begin{align*}
        \|\Gamma_y-\Gamma_{y'}\|_\alpha \le C \int_0^T e^{\alpha t} \int_t^T \sup_m \left| y\left(r, m \right)- y'\left(r, m \right)\right|\;\mathrm{d}r\; \mathrm{d}t \le \frac{C}{\alpha} \|y-y'\|_\alpha.
    \end{align*}
    Also, by similar way, we get
    \begin{align*}
        \|\Gamma_y\|_\alpha \le |\psi|_\infty \frac{e^{\alpha T}}{\alpha} + \frac{C}{\alpha} \left( e^{\alpha T} + \|y\|_\alpha \right). 
    \end{align*}
    Let $\alpha > C$ and $L \in \R_+$ s.t. $L > \frac{e^{\alpha T} \left(|\psi|_\infty + C\right)}{\alpha-C}$. We can apply the Banach fixed point Theorem and deduce that there exists $Y \in \Rc_\alpha$ s.t. $\|Y\|_{\alpha} \le L$ and $\Gamma_{Y}=Y$. In addition, with $Y^0=0$, by defining $Y^{n+1}=\Gamma_{Y^n}$ for any $n \ge 0$, we have $\lim_{n \to \infty} \|Y^n-Y\|_\alpha=0$. This allows us to verify the continuity of $Y$.

    \medskip
    Let us observe that, if $y$ admits a bounded linear functional derivative $\delta_m y$, we have
    \begin{align*}
        \delta_m \Gamma_y(t,m)(x)=\E\bigg[ \delta_m \psi \left(\Lc(X^{t,U}_T) \right)\left(X^{t,x}_T \right) &+ \int_t^T \delta_m \overline{f} \left( r, \Lc(X^{t,U}_r), y (r, \Lc(X^{t,U}_r)) \right) (X^{t,x}_r ) \;\mathrm{d}r
        \\
        &+ \int_t^T \overline{f}' \left( r, \Lc(X^{t,U}_r), y (r, \Lc(X^{t,U}_r)) \right) \delta_m y(r, \Lc(X^{t,U}_r))(X^{t,x}_r) \;\mathrm{d}r\bigg].
    \end{align*}
    We can check that $|\delta_m \Gamma_y|_\infty \le |\delta_m \psi|_\infty +T\;|\delta_m \overline{f}|_\infty + T\;|\overline{f}'|_\infty |\delta_m y|_\infty$ and also, $|\Gamma_y|_\infty \le |\psi|_\infty + T\;C\left(1+|y|_\infty \right)$. 
    For another $y'$ admitting a bounded linear functional derivative,
    \begin{align*}
        \left| \delta_m \left(\Gamma_y-\Gamma_{y'} \right) \right|_\infty \le T\;{\rm Lip}(\delta_m \overline{f})|y-y'|_\infty + T\;|\delta_m y|_\infty {\rm Lip}(\overline{f}')|y-y'|_\infty + T\;|\overline{f}'|_\infty |\delta_m \left( y - y' \right)|_\infty
    \end{align*}
    and $|\Gamma_y-\Gamma_{y'}|_\infty \le T\;{\rm Lip}(\overline{f})|y-y'|_\infty$. Let $T$ be s.t. $T(C+|\overline{f}'|_\infty)<1$ and $L_T >0$ be s.t. $\frac{|\psi|_\infty+TC}{1-TC} + \frac{||\delta_m \psi|_\infty| + T |\delta_m \overline{f}|_\infty}{1- T |\overline{f}'|_\infty } \le L_T$. If $|\delta_m y|_\infty+ |y|_\infty \le L_T$ then $|\delta_m \Gamma_y|_\infty+ |\Gamma_y|_\infty \le L_T$. In addition, if $ T\;{\rm Lip}(\delta_m \overline{f}) + T\;L_T {\rm Lip}(\overline{f}') + T\;|\overline{f}'|_\infty + T\;{\rm Lip}(\overline{f})< 1$ then the map $\Gamma$ is a contraction in $\Sc:=\left\{y:\;|y|_\infty + |\delta_m y|_\infty \le L_T \right\}$ for the norm $\|y\|_\infty:=|y|_\infty + |\delta_m y|_\infty$. By Banach fixed point theorem there exists $Y \in \Sc$ verifying $\Gamma_Y=Y$ and $\delta_m \Gamma_Y =\delta_m Y$.
\end{proof}

\end{appendix}

\bibliographystyle{plain}


\bibliography{BSDE_McKVlasov_arxivVersion}




\end{document}